\pdfoutput=1
\RequirePackage{ifpdf}
\ifpdf 
\documentclass[pdftex]{sigma}
\else
\documentclass{sigma}
\fi

\usepackage{array}

\usepackage[all]{xy} \xyoption{arc}

\usepackage{tikz}

\newtheorem{thm}{Theorem}[section]
\newtheorem{cor}[thm]{Corollary}
\newtheorem{lem}[thm]{Lemma}

\newtheorem{prob}[thm]{Problem}
 { \theoremstyle{definition}
\newtheorem{dfn}[thm]{Definition}

\newtheorem{ex}[thm]{Example}
\newtheorem{rmk}[thm]{Remark} }

\numberwithin{equation}{section}

\newcommand{\cL}{\mathcal{L}}

\newcommand{\cS}{\mathcal{S}}


\newcommand{\bV}{\mathbf{V}}


\newcommand{\veps}{\varepsilon}

\DeclareMathOperator{\Mod}{\mathbf{Mod}}

\DeclareMathOperator{\uhp}{\mathcal{H}}
\DeclareMathOperator{\rk}{rk}

\DeclareMathOperator{\Tr}{Tr}

\DeclareMathOperator{\Hom}{Hom}

\DeclareMathOperator{\im}{im}

\DeclareMathOperator{\GL}{GL}

\DeclareMathOperator{\SL}{SL}

\DeclareMathOperator{\Res}{Res}

\DeclareMathOperator{\Ind}{Ind}
\DeclareMathOperator{\id}{id}

\DeclareMathOperator{\diag}{diag}

\newcommand*{\df}{\mathrel{\vcenter{\baselineskip0.5ex \lineskiplimit0pt
 \hbox{\scriptsize.}\hbox{\scriptsize.}}} =}


\providecommand{\abs}[1]{\left\lvert#1\right\rvert}

\providecommand{\twomat}[4]{\left(\begin{matrix}#1&#2\\#3&#4\end{matrix}\right)}
\providecommand{\stwomat}[4]{\left(\begin{smallmatrix}#1&#2\\#3&#4\end{smallmatrix}\right)}
\providecommand{\twovec}[2]{\left(\begin{matrix}#1\\#2\end{matrix}\right)}
\providecommand{\pseries}[2]{#1[\![ #2 ]\!]}
\providecommand{\lseries}[2]{#1(\!( #2 )\!)}


\newcommand{\QQ}{\mathbb{Q}}

\newcommand{\CC}{\mathbb{C}}

\newcommand{\ZZ}{\mathbb{Z}}
\newcommand{\PP}{\mathbb{P}}

\newcommand{\NN}{\mathbb{N}}

\DeclareMathOperator{\diagg}{diag}
\DeclareMathOperator{\Vir}{Vir}

\DeclareMathOperator{\ch}{\mathfrak{ch}}

\DeclareMathOperator{\PSL}{PSL}
\DeclareMathOperator{\Der}{Der}
\DeclareMathOperator{\Id}{Id}
\DeclareMathOperator{\codim}{codim}
\DeclareMathOperator{\wt}{wt}

\DeclareMathOperator{\ev}{ev}

\begin{document}
\allowdisplaybreaks

\newcommand{\arXivNumber}{2111.04616}

\renewcommand{\PaperNumber}{085}

\FirstPageHeading

\ShortArticleName{Character Vectors of Strongly Regular Vertex Operator Algebras}

\ArticleName{Character Vectors of Strongly Regular Vertex\\ Operator Algebras}

\Author{Cameron FRANC~$^{\rm a}$ and Geoffrey MASON~$^{\rm b}$}

\AuthorNameForHeading{C.~Franc and G.~Mason}

\Address{$^{\rm a)}$~McMaster University, Canada}
\EmailD{\href{mailto:franc@math.mcmaster.ca}{franc@math.mcmaster.ca}}

\Address{$^{\rm b)}$~UCSC, USA}
\EmailD{\href{mailto:gem@ucsc.edu}{gem@ucsc.edu}}

\ArticleDates{Received December 11, 2021, in final form October 13, 2022; Published online October 29, 2022}

\Abstract{We summarize interactions between vertex operator algebras and number theory through the lens of Zhu theory. The paper begins by recalling basic facts on vertex operator algebras (VOAs) and modular forms, and then explains Zhu's theorem on characters of VOAs in a slightly new form. We then axiomatize the desirable properties of modular forms that have played a role in Zhu's theorem and related classification results of VOAs. After this we summarize known classification results in rank two, emphasizing the geometric theory of vector-valued modular forms as a means for simplifying the discussion. We conclude by summarizing some known examples, and by providing some new examples, in higher ranks. In particular, the paper contains a number of potential character vectors that could plausibly correspond to a VOA, but such that the existence of a corresponding hypothetical VOA is presently unknown.}

\Keywords{vertex operator algebras; conformal field theory; modular forms}

\Classification{17B69; 18M20; 11F03}

\section{Introduction}\label{s:intro}
The study of 2-dimensional conformal field theory and the related theory of vertex operator algebras (VOAs), seen from both a physical and mathematical perspective, has from its very beginnings been tightly entwined with the theory of modular forms. In response to a recent mathematical trend~-- which traces back nearly four decades in the physics literature~-- of using results on modular forms to help clarify, and even classify, vertex operator algebras subject to various restrictions, it seems timely to summarize these results for both pracitioners of the art, and for those in related fields that might find inspiration and interest in this subject. Some of the physically motived history of this subject can be traced back to papers such as \cite{Bantay, Bantay2, BG2, BCIR, Cardy, ES2, ES1,GK, MMS}. Below we shall mostly avoid discussion of the important physical aspects of this work, but we thank an annonymous referee for suggesting the inclusion of these references. We refer the reader to Section~\ref{s:voa} for further discussion of definitions and to Section~\ref{SSnotation} for notation.

One of the earliest and to this day most spectacular connections between VOAs and modular forms is of course the construction by Frenkel, Lepowsky and Meurman \cite{FLM} of the monster module denoted by $V^{\natural}$, a holomorphic VOA (i.e., it has only one irreducible module) whose character is the function $j-744$, where $j$ denotes the $j$-invariant of elliptic curves, and whose symmetry group is the Monster simple group. The passage of time only solidifies the status of this construction as the most natural description of the Monster group, notwithstanding the currently open conjecture that the monster module is the unique such VOA. This situation might change if an affirmative answer to Hirzebruch's Preisfrage is found \cite{HirzebruchEtAl}. That is to say, does there exist a $24$-dimensional Monster manifold with elliptic genus equal to $j-744$?

Several years after the appearance of \cite{FLM}, in a groundbreaking work, Zhu showed \cite{Zhu} that the representation theory of a certain class of VOAs essentially generalizes the connection between the $j$-invariant and the monster module. Zhu's work was systematized and broadened in scope in~\cite{DLMModular} (see also the recent paper~\cite{Codogni} of Codogni for a geometric approach), and these trends led to an axiomatic framework for studying the class of VOAs such as $V^{\natural}$ that have only finitely many isomorphism classes of irreducible modules and their \emph{characters}~-- generating series that can be regarded as analogues of the characters of a finite group~-- that are components of vector-valued modular forms. Huang later showed~\cite{Huang} that the representation category of such VOAs has the structure of a modular tensor category \cite{BakalovKirillov}. More recently, Dong--Lin--Ng showed, under some assumptions~\cite{DongLinNg}, that the graded traces appearing in Zhu's theorem have a congruence kernel, a belief which prior to the work of Dong--Lin--Ng had already attained the status of an axiom in the physics literature. The techniques used in \cite{DongLinNg} are, very approximately, a conjunction of the methods of modular tensor categories and the methods established in~\cite{Zhu} and~\cite{DLMModular}.
Very recently, Calegari--Dimitrov--Tang \cite{CDT} established the unbounded denominators conjecture, which dates back to \cite{ASD}, but in a more general vector-valued form suitable for applications in VOA theory and conformal field theory. This theorem states roughly that a vector-valued modular form with integer Fourier coefficients must have components that are classical scalar-valued congruence modular forms. Since characters of VOAs patently have nonnegative integer Fourier coefficients, the spectacular work of Calegari--Dimitrov--Tang makes the congruence nature of characters of VOAs manifestly obvious.\footnote{Andr\'e already observed in the appendix to \cite{Andre} that $p$-curvature techniques as employed in \cite{CDT} could be applied to questions in conformal field theory.} Furthermore it brings clarity to previous work by removing superfluous assumptions.

Having set the stage, the two aims of this paper are to:
\begin{enumerate}\itemsep=0pt
\item[1)] explain in somewhat new and hopefully accessible terms exactly what is proved in~\cite{Zhu} and~\cite{DLMModular},
\item[2)] describe how families of modular forms have played a r\^{o}le in using Zhu's theorem to classify some classes of VOAs.
\end{enumerate}
This program is essentially coextensive with the implementation of what is known as the \textit{modular bootstrap} in physics-speak. It was introduced
in a visionary paper of Mather, Mukhi and Sen~\cite{MMS}.

When discussing classification of VOAs one must always be conscious that the general problem is at least as hopeless as the classification of even positive-definite unimodular lattices. Nevertheless, as we shall explain, there are several interesting cases where a classification has been successful, and these efforts have led to the discovery of new and interesting VOAs. Moreover, while classification of nice lattices is considered intractable in general, greater success has been found in establishing useful \textit{mass formulas}. If such a situation were to pertain also to VOAs, these types of classification results could very well aid in discovering analogous mass formulas. Indeed, it is our considered opinion that establishing a mass formula for suitable classes of VOAs, for example, strongly regular holomorphic VOAs of fixed central charge~$c$, is one of the main open problems that currently exists in VOA theory. Similar comments, less forcibly stated, can already be seen in Schellekens' paper \cite{Schellekens}. We might add, however, that there is currently not even a conjecture in the literature as to what such a mass formula might look like. Furthermore it is presently unknown\footnote{For a nontrivial strongly regular, holomorphic VOA, $c=8k$ is a positive integer divisible by $8$. There is a~unique iso class for $k=1$, two iso classes if $k=2$, whereas finiteness is unknown for any $k\geq 3$.} if there are only \emph{finitely many} strongly regular holomorphic VOAs with a fixed value of $c$! Other than these remarks, though, we alas make no contribution to the problem of describing mass formulas for VOAs in this paper.

A unifying theme throughout this paper is the study of families of monodromy representations, modular forms, and differential equations. To see how these ideas arise and intersect with the study of VOAs, consider the problem, raised and solved by Schellekens (loc.\ cit.) at the level of physical rigor, of classifying strongly regular holomorphic VOAs with $c=24$. The character of such a VOA is a weakly holomorphic modular form for $\SL_2(\ZZ)$ of weight zero with at most a simple pole at the cusp. The space of such forms is spanned by the constant function and the modular $j$-invariant. Moreover, the character of such a holomorphic VOA takes the form $\tfrac 1q + O(1)$ where $q= {\rm e}^{2\pi{\rm i} \tau}$, so that such characters look like $j+m$ for some $m\in \ZZ$. This is a~simple example of a one-parameter family of modular forms. Since for each $m\in \CC$ the form $j+m$ transforms under the trivial representation of $\SL_2(\ZZ)$, so that the monodromy representation is constant along the family, such a~family is termed \emph{isomonodromic}~\cite{Sabbah}. Schellekens showed why one should expect exactly $71$ values of $m$ that correspond to holomorphic VOAs, and, except possibly for the monstrous case $m=-744$ which remains open, it turns out that there is a~\emph{unique} such VOA for each of these values of~$m$. The rigorous proof of this difficult result is due to many mathematicians, too many to cite here. For a good survey see the paper~\cite{LamShimakura} of Lam and Shimakura.

Throughout all of these works the subspace of the VOA of conformal weight $1$ has the structure of a reductive Lie algebra, and the classification of such algebras plays, as it does in much of VOA theory, a major r\^{o}le.\footnote{One of the issues with the case $m=-744$ is that the relevant Lie algebra is then zero-dimensional!} It is a remarkable classification, particularly as any integer $m\geq -744$ yields a plausible choice $j+m$ for a character of a holomorphic VOA, yet most such values of $m$ do \emph{not} correspond to any holomorphic VOAs of central charge $24$!\ This example illustrates the reality that classifying modular forms with nice arithmetic properties~-- the conformal modular forms of Section~\ref{s:conformal}~-- is far from sufficient for classifying characters of VOAs, let alone the VOAs themselves.

In spite of this, there are several recent examples where these two \textit{a priori} very different classification problems have more or less been equivalent. These somewhat more tractable cases have involved VOAs with a small number of irreducible modules~-- typically, at most three~-- and such that the corresponding characters are realized as specializations of families of modular forms that vary in a \emph{purely monodromic} deformation: that is, any small variation of the deformation parameters necessarily changes the underlying monodromy representation. In these purely monodromic cases it has been true that any vector-valued modular form that looks like it \emph{could} plausibly be the character of a VOA \emph{is} the character of a unique VOA. We make these observations precise below, and observe that the purely monodromic deformations are the exception rather than the rule, and thus in general one expects there to be a large gulf between the questions of classifying arithmetically nice modular forms and any corresponding VOAs. Put another way, the difficulties encountered in formulating Schellekens-list type results are typical rather than exceptional.

Let us now briefly describe the contents of each section of the present paper. In Section~\ref{s:voa} we introduce some basic definitions and notations on VOAs and state Zhu's theorem. In Section~\ref{s:vvmf} we describe the basic properties of vector-valued modular forms, recalling in particular the free-module theorem. We state and prove a form (Theorem~\ref{FMT2}) of the free-module theorem that is suited to a discussion of characters of VOAs; this result appeared previously as Theorem~3.3 of~\cite{Gannon} without proof, and for convenience we supply one here. In Section~\ref{s:zhu2} we return to a~discussion of the ideas surrounding Zhu's theorem. We prove a pithy version (Theorem~\ref{thmgradedZ}) that perhaps has independent interest. It emphasizes the r\^{o}le of vector-valued modular forms and the modular derivative $D$. This operator appears only implicitly in Zhu's work, but emphasizing its r\^{o}le clarifies how the relevant differential equations and their monodromy representations arise.\looseness=-1

In Section~\ref{s:conformal} we return to a discussion of modular forms and in particular introduce a notion of \emph{conformal modular form} that incorporates the properties possessed by a modular form that \emph{is} the character of a VOA. In Section~\ref{s:frobenius} we introduce the notion of a \emph{Frobenius family} of modular forms, which is essentially equivalent to the specification of a family of ordinary differential equations on the moduli space of elliptic curves. With these definitions and ideas in place, we can thus explain how most classification results of VOAs have proceeded in four main steps:
\begin{enumerate}\itemsep=0pt
\item[(1)] Impose restrictions on number of simple modules, central charge and conformal weights of the VOAs of interest.
\item[(2)] Write down the most general Fuchsian ODE whose solutions satisfy these conditions.
\item[(3)] Identify all of the conformal specializations of the Frobenius family of modular forms solving the ODE in step (2).
\item[(4)] Identify which of these conformal specializations is in fact realized by a VOA.
\end{enumerate}
While steps (1) and (2) are generally straightforward, both of steps~(3) and~(4) can be quite nontrivial in general.

In Section~\ref{s:rank2} we examine known classification results of VOAs with two simple modules in detail, and explain how all the cases examined thus far have involved purely monodromic Frobenius families, and steps~(3) and~(4) above have been more or less equivalent. Byproducts of this discussion are some strange formulas (Theorem~\ref{t:rank2}) for the dimensions of the graded pieces of the nontrivial module of a VOA with exactly two simple modules (and small~$c$) in terms of values of the $\Gamma$-function. These formulas arise from that fact that the $S$-matrix should transform as a symmetric matrix, and this $S$-matrix is equal up to conjugation to classical monodromy matrices that arise in the study of the Gauss hypergeometric function. Finally, in Section~\ref{s:highrank} we discuss similar computations for VOAs with more than two simple modules, identifying in particular some potential characters of presently unknown VOAs: see for example equation~\eqref{eq:H} on page~\pageref{eq:H} and Table~\ref{rank4examples} on page~\pageref{rank4examples}.

While classification in general is almost surely a hopeless endeavour, it is equally true that there remain many interesting unexamined questions and computations, and in that regard we hope that readers will find these ideas and computations interesting.

\subsection{Notation}\label{SSnotation}
We collect some notation that we use.
\begin{itemize}\itemsep=0pt
\item[--] $\CC$ is the set of complex numbers.
\item[--] $\ZZ$ is the set of rational integers.
\item[--] $\NN$ is the set of positive integers.
\item[--] $\NN_0$ is the set of nonnegative integers.
\item[--] $\Gamma = \SL_2(\ZZ)$ is the homogeneous modular group.
\item[--] $T = \stwomat 1101$, $S =\stwomat 0{-1}1{\hphantom{-}0}$, $R=ST$ are special elements of $\Gamma$.
\item[--] $J= \stwomat {-1}0{\hphantom{-}0}1$.
\item[--] $\uhp = \{x+{\rm i}y\in\CC \mid y > 0\}$ is the complex upper half-plane.
\item[--] $q$ is a formal variable, or $q={\rm e}^{2\pi{\rm i} \tau}\ (\tau\in \uhp)$, depending on context.
\item[--] $B_n$ is the $n^{\rm th}$ Bernoulli number, defined by $\tfrac{z}{e^z-1}=\sum_{n\geq 0} \tfrac{B_n}{n!}z^n$.
\item[--] $G_k(\tau)=-\tfrac{B_k}{k!}- \tfrac{2}{(k-1)!} \sum_{n\geq 0} \sigma_{k-1}(n)q^n$ for $k \geq 2$ is the weight-$k$ Eisenstein series.
\item[--] $E_k = -\frac{k!}{B_k}G_k=1+\cdots $ is the normalized Eisenstein series.
\item[--] $j=q^{-1}+744+196884q+\cdots$ is the modular invariant of elliptic curves.
\item[--] $K=\tfrac{1728}{j}$.
\item[--] $M=\bigoplus_{k\geq 0} M_k=\CC[E_4, E_6]$ is the $2\ZZ$-graded algebra of holomorphic modular forms on $\Gamma$.
\item[--] $M^!=\CC\big[E_4, E_6,\Delta^{-1}\big]$ is the $2\ZZ$-graded algebra of holomorphic modular forms on $\Gamma$ with at worst finite order poles at the cusp.
\item[--] $D_k=\tfrac{1}{2\pi{\rm i}\tau}\tfrac{{\rm d}}{{\rm d}\tau}-\frac{k}{12}E_2=\tfrac{1}{2\pi{\rm i}\tau}\tfrac{{\rm d}}{{\rm d}\tau}+kG_2$ is the $k^{\textrm{th}}$ modular derivative.
\item[--] $D\colon M \rightarrow M$ is the graded operator (derivation) that acts on $M_k$ as $D_k$.
\item[--] $\zeta(z, \tau)$ is the Weierstrass $\zeta$-function.
\item[--] $\wp(z, \tau)= -\partial_z\zeta(z, \tau)$ is the Weierstrass $\wp$-function.
\item[--] $\mathfrak{F}$ is the linear space of holomorphic functions in $\uhp$.
\end{itemize}

\section{Vertex operator algebras}\label{s:voa}
\subsection{Strongly regular VOAs and the character vector}
Throughout this paper we work in the general framework of \emph{strongly regular VOAs}. For a~more thorough discussion of this class of VOAs the reader may consult~\cite{MasonLattice}. Since the focus of our attention is on certain arithmetic and algebraic invariants of such a VOA, the particular set of assumptions we make about our VOAs will play little explicit r\^{o}le in what transpires, although at the end of the day they are not expendable. The main ingredients necessary to define the character vector of a VOA $V$ are that the category of admissible $V$-modules $V$-Mod is \emph{semisimple}, i.e., $V$ is \emph{rational}. One knows~\cite{DLMTwisted} that rationality implies that $V$ has only a finite number of inequivalent simple modules, which we label $M_1, M_2, \dots, M_{d}$. We will also need to know that~$V$ is of \emph{CFT-type}, so that the vacuum vector $\mathbf{1}$ is the unique state of conformal weight~$0$ (up to scalars). For such VOAs it is known~\cite{DongMasonShifted} there are no nonzero states in $V$ of negative weight. (Physicists say that there is a \textit{nondegenerate vacuum}, and that $V$ has \textit{positive-energy}). In any case, the general shape of the decomposition of a CFT-type $V$ into conformal pieces looks like
\begin{gather}\label{confdecomp}
V= \CC\mathbf{1}\oplus V_1\oplus V_2\oplus \cdots.
\end{gather}

Concerning the simple modules, we will usually assume the notation chosen so that $M_1=V$ is the adjoint (or vacuum) module. As before, we take $c$ to be the \emph{central charge} of $V$. Then by the \emph{$q$-character} of $M_j$ we mean the familiar first expression in the next display:
\begin{gather} \label{fjdef}
f_j \df \Tr_{M_j}q^{L(0)-\frac{c}{24}} = q^{h_j-\frac{c}{24}}\sum_{n\geq 0} \dim(M_j)_nq^n.
\end{gather}
That the $q$-character has the general shape given by the second equality above is a consequence~\cite{DLMTwisted} of the simplicity of~$M_j$. The scalar~$h_j$ is fundamental. It is called the \emph{conformal weight} of $M_j$. These mysterious numbers, determined by~$V$, will be a focal point of this paper. For the adjoint module, however, there is no mystery. Because $V$ is of CFT-type~(\ref{confdecomp}) says that $h_1=0$, so that
\begin{gather} \label{fodef}
f_1(q) = \Tr_{V}q^{L(0)-\frac{c}{24}}=q^{-\frac{c}{24}}\sum_{n\geq0} \dim V_nq^n.
\end{gather}
In general the conformal weights could be any complex numbers, but for strongly regular VOAs they are \emph{rational}~\cite{DMRational}.

It behooves us to assemble these individual $q$-characters into a single entity called the \emph{character vector of~$V$}:
\begin{gather} \label{FVdef}
F_V(q)\df \left(\begin{matrix}f_1(q) \\ \vdots \\f_{d}(q)\end{matrix}\right).
\end{gather}

Thus far we have used only a few of the properties of strongly regular VOAs, enough in fact to allow us to define $F_V$. But our main goal is to try to understand the remarkable properties of these character vectors, and for this we need to assume more. We shall discuss Zhu's theorem~\mbox{\cite{DLMModular, Zhu}} more fully in the next subsection, but already in his paper Zhu found it necessary to assume that $V$ is \emph{$C_2$-cofinite}. It is widely believed that this condition is a~\emph{consequence} of rationality, but until this is proved it is convenient to include it in the definition of strong regularity. Strong regularity involves additional properties of $V$ such as the existence of a~nondegenerate invariant bilinear form on~$V$, cf.~\cite{LiSymmetric}. This condition allows us to prove some structural results about $V$, for example $V$ is \textit{simple}~-- an important fact if one is to include the adjoint module as on of the~$M_i$. In some cases we can obtain \emph{characterizations} of certain $V$ by their character vectors.

\subsection{Zhu's theorem}\label{SSZhu}
We continue with a strongly regular VOA $V$. So far the definition of the character vector $F_V$ in equation \eqref{FVdef} is purely formal. An important preliminary result of Zhu \cite{Zhu} reveals the analytic nature of the $q$-characters $f_j(q)$.

This is achieved by showing that each $f_j$ is a solution of a differential equation in the punctured $q$-disk with a regular singularity at $q=0$ and with holomorphic coefficients (actually, the coefficients are holomorphic modular forms). It might be worth pointing out that the $C_2$-condition on $V$ is used here to obtain the existence of the differential equation. Be that as it may, one deduces that the $q$-expansions $f_j$ of equation \eqref{fjdef} are actually the power series expansions obtained by applying the Frobenius method to solve the differential equation in a neighborhood of the regular singularity, and further that this defines a function, that we continue to denote by $f_j$, that is holomorphic throughout $\uhp$ after writing $q = {\rm e}^{2\pi{\rm i} \tau}$ in equation \eqref{fjdef}.

In this way we may consider the $q$-characters as holomorphic functions in $\uhp$ and the character vector as a holomorphic vector-valued function
\begin{gather*}
F_V\colon \ \uhp \longrightarrow \CC^d.
\end{gather*}
In future we shall always regard $F_V(\tau)$ as an $d \times 1$ \emph{column vector} as in~(\ref{FVdef}).

It is convenient to introduce
\begin{gather*}
\ch_V \df \langle f_1(\tau), \dots, f_{d}(\tau)\rangle\subseteq\mathfrak{F},
\end{gather*}
that is, $\ch_V$ is the linear span of the functions $f_j$. The group $\Gamma$ acts on the left of $\uhp$ in the usual way:
\begin{gather*}
\gamma\tau \df \frac{a\tau+b}{c\tau+d}, \qquad \gamma \df \left(\begin{matrix}a & b \\ c & d\end{matrix}\right)\in \Gamma.
\end{gather*}
This left action permits us to make $\mathfrak{F}$ into a right $\Gamma$-module by defining
\begin{gather*}
(f|_0\gamma) (\tau)\df f(\gamma\tau), \qquad f \in\mathfrak{F}.
\end{gather*}

We can now state the main theorem of this section succinctly as follows.
\begin{thm} \label{thmZhu}
Suppose that $V$ is a strongly regular VOA. Then $\ch_V$ is a right $\Gamma$-submodule of~$\mathfrak{F}$.
\end{thm}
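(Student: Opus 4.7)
The strategy is to verify stability under the generators $S$ and $T$ of $\Gamma$. The action of $T$ is immediate: since conformal weights $h_j$ and $c/24$ are rational (\cite{DMRational}), the $q$-expansion \eqref{fjdef} gives
\[
(f_j|_0 T)(\tau) = f_j(\tau+1) = e^{2\pi i (h_j - c/24)} f_j(\tau),
\]
so $f_j|_0 T$ is a scalar multiple of $f_j$, hence lies in $\ch_V$. The substantive content of the theorem is therefore stability under $S$.

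For this I would deduce the claim from the stronger form of Zhu's theorem concerning torus trace functions attached to arbitrary $v\in V$. For homogeneous $v$ of weight $\wt(v)=k$ and each simple module $M_j$, consider the graded trace
\[
Z_j(v, \tau) = \Tr_{M_j}\!\left(o(v)\, q^{L(0) - c/24}\right),
\]
where $o(v)$ is the weight-zero Fourier mode of $Y(v,z)$. The $q$-character $f_j$ is the special case $Z_j(\mathbf{1},\tau)$ with $k=0$. Zhu's modular-invariance theorem then provides, for each $\gamma\in\Gamma$ with bottom row $(\gamma_{21},\gamma_{22})$, scalars $a_{ji}(\gamma)\in\CC$, independent of $v$, such that
\[
Z_j(v,\gamma\tau) = (\gamma_{21}\tau + \gamma_{22})^k \sum_{i=1}^{d} a_{ji}(\gamma)\, Z_i(v,\tau).
\]
Specialising to $v=\mathbf{1}$, $k=0$ and $\gamma=S$ yields $f_j|_0 S = \sum_i a_{ji}(S) f_i \in \ch_V$, which establishes $S$-stability and completes the proof.

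The main obstacle is Zhu's modular-invariance statement itself. A proof proceeds by: (i) using $C_2$-cofiniteness to show that for each weight $k$ the span of the $Z_j(v,\tau)$ with $\wt(v)=k$ is finite-dimensional, and that each such function satisfies a Fuchsian modular ODE with holomorphic modular-form coefficients; (ii) deriving recursion identities expressing shifted correlators $\Tr_{M_j}\!\left(o(a)o(v)\,q^{L(0)-c/24}\right)$ as combinations of lower-order traces multiplied by Eisenstein series $G_k$ and by Weierstrass $\wp$- and $\zeta$-functions in auxiliary variables; (iii) invoking the explicit $\SL_2(\ZZ)$-transformation laws of these coefficient functions, together with a careful analytic continuation argument, to close the span under the full modular action. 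Step (iii) is the delicate point: a priori $f_j|_0 S$ is only known to be holomorphic on $\uhp$, and showing that it belongs to the finite-dimensional span built from the $f_i$ is the heart of Zhu's argument and the reason why $C_2$-cofiniteness is needed.
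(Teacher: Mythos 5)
Your argument is essentially the paper's own route: Theorem~\ref{thmZhu} is deduced from Zhu's modular-invariance theorem for the $1$-point functions $\Tr_{M_j}o(v)q^{L(0)-c/24}$ (Theorem~\ref{thmZ}, with the $v$-independent coefficients $a_{ji}(\gamma)$ expressing that the conformal block $B$ is a right $\Gamma$-module), followed by evaluation at the vacuum exactly as in Lemma~\ref{lemeval}, with the heavy lifting ($C_2$-cofiniteness, the Fuchsian MLDEs, the recursion identities) delegated to \cite{Zhu} and \cite{DLMModular} just as the paper does. The only slight imprecision is that the automorphy weight in Zhu's theorem is the square-bracket weight $v\in V_{[k]}$ rather than $\wt(v)$, but this is immaterial here since the specialization is at $v=\mathbf{1}$, which has weight $0$ in both gradings.
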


 Thus, seemingly out-of-the-blue, we are gifted with a beautiful finite-dimensional $\Gamma$-module which one may~-- among other things~-- study using the methods of representation theory. Still, it has to be emphasized that $\ch_V$ is much more than a mere
 $\Gamma$-module because, unlike standard representation theory, $\ch_V$ has a~\emph{distinguished set of generators} consisting of $q$-characters of the simple $V$-modules and these have an arithmetic structure. This arithmetic structure is the main focus of our interest and the representation of $\Gamma$ on~$\ch_V$ is subservient to it.

 \begin{rmk} The line of argument we have been pursuing leads ineluctably to the study of \emph{vector-valued modular forms}. This is taken up again in Section~\ref{s:vvmf}.
 \end{rmk}

\subsection{The rank of a strongly regular VOA}
 Here we look more closely at the relation
 \[
F_V \longleftrightarrow \ch_V
\]
between the character vector of $V$ and the $\Gamma$-module $\ch_V$.

The first question is, how does $F_V$ manifest the $\Gamma$-module structure of $\ch_V$? In fact nothing more than easy linear algebra is needed to prove the following, once Zhu's theorem is available:
\begin{lem}
 There is a matrix representation $\rho\colon \Gamma \rightarrow \GL_d(\CC)$ such that
\begin{gather*}
\rho(\gamma)F_V(\tau)= F_V(\gamma\tau),
\end{gather*}
and $\rho$ is uniquely determined if $\dim\ch_V=d$.
\end{lem}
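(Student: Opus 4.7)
The plan is direct linear algebra, leveraging Zhu's theorem. First, for each $\gamma \in \Gamma$ and each $j$, the function $(f_j|_0\gamma)(\tau) = f_j(\gamma\tau)$ lies in the $\Gamma$-submodule $\ch_V$ by Theorem~\ref{thmZhu}, so I can expand
\[
f_j(\gamma\tau) = \sum_{k=1}^d \rho(\gamma)_{jk} f_k(\tau)
\]
for some complex scalars $\rho(\gamma)_{jk}$. Packaging these into a $d \times d$ matrix $\rho(\gamma) = (\rho(\gamma)_{jk})$ and reading off the components gives the candidate identity $\rho(\gamma)F_V(\tau) = F_V(\gamma\tau)$.

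To verify that $\rho$ is a homomorphism valued in $\GL_d(\CC)$, I would apply the defining identity twice,
\[
\rho(\gamma_1\gamma_2)F_V(\tau) = F_V(\gamma_1\gamma_2\tau) = \rho(\gamma_1)F_V(\gamma_2\tau) = \rho(\gamma_1)\rho(\gamma_2)F_V(\tau).
\]
Assume now $\dim \ch_V = d$, so that the components of $F_V$ are linearly independent holomorphic functions on $\uhp$. Then $\{F_V(\tau)\mid\tau \in \uhp\}$ spans $\CC^d$, since any nonzero linear functional vanishing on this set would exhibit a nontrivial linear relation among the $f_j$. This forces $\rho(\gamma_1\gamma_2) = \rho(\gamma_1)\rho(\gamma_2)$ as matrices, and then $\rho(\gamma)\rho\bigl(\gamma^{-1}\bigr) = \rho(1) = I_d$ places $\rho(\gamma)$ in $\GL_d(\CC)$. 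The uniqueness claim follows from the same spanning argument: any other matrix $\rho'(\gamma)$ satisfying the identity would give $(\rho(\gamma) - \rho'(\gamma))F_V(\tau) \equiv 0$, whence $\rho(\gamma) = \rho'(\gamma)$.

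The only remaining point is existence in the degenerate case $\dim \ch_V < d$, where the $f_j$ are linearly dependent and the expansion scalars $\rho(\gamma)_{jk}$ above are not uniquely determined; one must choose them coherently to produce a genuine homomorphism landing in $\GL_d(\CC)$. A convenient way is to reorder so $f_1,\dots,f_e$ is a basis of $\ch_V$, record the dependencies $f_j = \sum_{i \leq e} A_{j-e,i}f_i$ for $j > e$ in a $(d-e)\times e$ matrix $A$, and let $\sigma\colon \Gamma \to \GL_e(\CC)$ denote the (now well-defined) matrix representation on the chosen basis. Taking $\rho(\gamma)$ to be the block-lower-triangular $d\times d$ matrix with diagonal blocks $\sigma(\gamma)$ and $I_{d-e}$ and lower-left block $A\sigma(\gamma) - A$ yields a homomorphism in $\GL_d(\CC)$ compatible with $F_V$, as a short block multiplication verifies. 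This bookkeeping is the only mildly delicate step; the substantive content of the lemma rests on Zhu's theorem, with the rest being standard linear algebra.
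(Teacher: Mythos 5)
Your proof is correct. The paper offers no written proof of this lemma (it merely remarks that "nothing more than easy linear algebra is needed" once Theorem~\ref{thmZhu} is available), and your argument supplies exactly those details: the expansion of $f_j(\gamma\tau)$ in $\ch_V$, the spanning argument that forces the homomorphism property, invertibility, and uniqueness when $\dim\ch_V=d$. Your treatment of the degenerate case $\dim\ch_V=e<d$ via the block-lower-triangular matrices with diagonal blocks $\sigma(\gamma)$ and $I_{d-e}$ and lower-left block $A\sigma(\gamma)-A$ checks out (the block multiplication does give $\rho(\gamma_1)\rho(\gamma_2)=\rho(\gamma_1\gamma_2)$ and $\rho(\gamma)F_V(\tau)=F_V(\gamma\tau)$), and this is the one point where existence genuinely requires a choice, consistent with the paper's remark that $\rho$ is then not unique.
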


The condition $\dim\ch_V=d$ means that the $q$-characters of the simple $V$-modules are linearly independent. If this condition does \emph{not prevail} then $\rho$ in the lemma is typically \emph{not} unique, and the linear space $\ch_V$ loses information about~$V$.

Nevertheless, it is essential to study $\ch_V$ because many typical questions do not involve the linear independence of the~$f_j$. For example: are the functions $f_j(\tau)$ modular functions on a~congruence subgroup of~$\Gamma$ and if so, what are their levels?

The phenomenon that the components of $F_V$ are \emph{not} linearly independent is a common one, and often arises as follows: if $M_j$ is a simple $V$-module, then the dual module $M_j'$ is again a~simple $V$-module and it has the same $q$-character as $M_j$.

The possible discrepancy between $d$ and $\dim\ch_V$, which can sometimes be very large~\cite{MNS2}, veers from the merely annoying to being a serious obstacle. It is convenient to define the \emph{rank} of $V$ as follows:
\begin{gather} \label{dimV}
\rk V \df d.
\end{gather}
That is, $\rk V$ is the number of inequivalent simple $V$-modules. It is a crude but useful measure of the complexity of $V$. Be aware that in the literature the central charge of $V$ has sometimes been called the rank of~$V$, e.g.,~\cite{FrenkelHuangLepowsky}, however this practice is less common nowadays.

\subsection{Holomorphic VOAs}\label{SSholVOAs}
A strongly regular VOA is called \emph{holomorphic} if it has rank $1$ in the sense of definition \eqref{dimV} above. The VOAs in this class have been well-studied both collectively and individually, though much remains to be learned about them. The adjoint module $V$ is the unique simple $V$-module, so the character vector $F_V(\tau)=(f_1(\tau))$ reduces to a single function. By Zhu's theorem, $\ch_V$~furnishes a~$1$-dimensional representation (character) $\alpha\colon \Gamma \rightarrow \CC^{\times}$ via the functional equation
\begin{gather} \label{chidef}
f_1(\gamma\tau)= \alpha(\gamma)f_1(\tau).
\end{gather}
In words, this says that $f_1(\tau)$ is a modular function of weight~$0$ on~$\Gamma$ with a character $\alpha$. Such functions have been well-understood for over a century: every such form equals a power of the Dedekind $\eta$-function times a weakly-holomorphic modular form of level one.

$\CC$ may be regarded as a VOA for which the vertex operator for $v\in \CC$ is multiplication by~$v$. We call this the \textit{trivial} VOA. The trivial VOA is holomorphic with $c=0$.

The character $\alpha$ is subject to some restrictions coming from the VOA structure. We describe these. It is well-known that the multiplicative group of characters of $\Gamma$ is cyclic of order 12, say with a generator $\psi$. So $\alpha$ is a power of $\psi$. We can be more specific:\ it is a convenience that the commutator quotient $\Gamma/\Gamma'$ is generated by the image of $T$.
Thus $\psi$ is uniquely determined by the property that
\begin{gather} \label{psidef}
\psi(T)= {\rm e}^{2\pi{\rm i}/12}.
\end{gather}
 With this notation in place we have
\begin{lem}\label{lemalpha} \label{lempsi}
 $\alpha\in \big\langle \psi^4\big\rangle$, that is, $\alpha$ has order $1$ or $3$.
\end{lem}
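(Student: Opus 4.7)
The plan is to pin down $\alpha$ at the generator $S$ of $\Gamma$ by a fixed-point argument, and then deduce the lemma from an elementary computation in $\Gamma^{\mathrm{ab}}$.

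Since $\psi$ generates the character group of $\Gamma$, we may write $\alpha=\psi^n$ for some $n\in\ZZ/12\ZZ$, and the conclusion $\alpha\in\langle\psi^4\rangle$ is exactly the statement $4\mid n$. From the relation $(ST)^3=S^2=-I$ in $\Gamma$, together with $\alpha(-I)=1$ (which follows from $f_1|_0(-I)=f_1$, as $-I$ acts trivially on $\uhp$), any character of $\Gamma$ satisfies $\alpha(S)=\alpha(T)^{-3}$. In particular $\psi(S)=\psi(T)^{-3}=e^{-\pi{\rm i}/2}=-{\rm i}$, so $\alpha(S)=(-{\rm i})^n$, and the condition $4\mid n$ is equivalent to $\alpha(S)=1$.

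To establish $\alpha(S)=1$, specialize the functional equation $f_1(-1/\tau)=\alpha(S)f_1(\tau)$ at the unique $S$-fixed point $\tau={\rm i}\in\uhp$, which yields $f_1({\rm i})=\alpha(S)f_1({\rm i})$. It thus suffices to show $f_1({\rm i})\neq 0$. From the $q$-expansion \eqref{fodef}, $f_1(\tau)=q^{-c/24}\bigl(1+\sum_{n\geq 1}\dim V_n\,q^n\bigr)$; at $\tau={\rm i}$ the value $q=e^{-2\pi}$ lies in $(0,1)$, so $q^{-c/24}>0$ and each summand $\dim V_n\,q^n$ is a nonnegative real number. By Zhu's theorem the series converges throughout $\uhp$, so $f_1({\rm i})\geq q^{-c/24}>0$.

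Combining these observations, $\alpha(S)=1$, whence $4\mid n$ and $\alpha\in\langle\psi^4\rangle$. The main step, such as it is, is recognizing the fixed-point trick; thereafter both the group-theoretic identity $\psi(S)=-{\rm i}$ and the positivity of $f_1({\rm i})$---which rests on the nonnegativity of the $\dim V_n$ for a CFT-type VOA and on convergence guaranteed by Zhu's theorem---are immediate.
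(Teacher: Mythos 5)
Your proof is correct and follows essentially the same route as the paper: reduce the lemma to $\alpha(S)=1$ via the abelianization of $\Gamma$ (the paper phrases this as $S=T^{\pm3}$ in $\Gamma/\Gamma'$), then evaluate the functional equation at the elliptic fixed point $\tau={\rm i}$ and observe $f_1({\rm i})\geq {\rm e}^{-2\pi c/24}>0$ from the nonnegativity of the $q$-expansion coefficients with $\dim V_0=1$. The only cosmetic wrinkle is that you invoke $\alpha(-I)=1$ to get $\alpha(S)=\alpha(T)^{-3}$ and then apply the same identity to $\psi$ (for which $\psi(-I)=-1$); the identity in fact holds for every character directly from $(ST)^3=S^2$, so nothing is lost.
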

\begin{proof}
 This result is well-known \cite{HohnSelbst}. We sketch the proof because it presages some ideas that will recur later.

The statement of the lemma is equivalent to the equality $\alpha(S)=1$, since $T^3=S$ in $\Gamma/\Gamma'$. In order to prove this equality, set $\gamma=S$ and $\tau={\rm i}$ in equation~\eqref{chidef}. The point is that ${\rm i}$ is an elliptic point for $S$, that is, $S{\rm i}={\rm i}$. Then \eqref{chidef} reads $f_1({\rm i})=\alpha(S)f_1({\rm i})$, so it suffices to prove that $f_1({\rm i}) \neq 0$.

Generally, a modular function may well vanish at ${\rm i}$, however this cannot be the case for $f_1(\tau)$. To see this, we use the special nature of the $q$-expansion of $f_1(\tau)$ in equation~\eqref{fodef}. Thus
\begin{gather*}
f_1({\rm i}) = {\rm e}^{-2\pi c/24} \sum_{n\geq 0} (\dim V_n) {\rm e}^{-2\pi n}{\geq}{\rm e}^{{-}2\pi c/24},
\end{gather*}
where the inequality holds because $\dim V_0=1$ and $c\in\QQ$.
\end{proof}

\begin{cor} \label{corc>0} If $V\neq \CC$ is a holomorphic VOA of central charge $c$ then $c\in 8\NN$.
 \end{cor}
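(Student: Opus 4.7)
The plan is to combine Lemma~\ref{lempsi}, which asserts that the character $\alpha$ has order dividing $3$, with the explicit $q$-expansion \eqref{fodef} of $f_1$. First I would read off $\alpha(T)$ from the transformation law \eqref{chidef}: since $f_1(\tau+1) = {\rm e}^{-2\pi {\rm i} c/24} f_1(\tau)$ directly from \eqref{fodef}, one obtains $\alpha(T) = {\rm e}^{-2\pi {\rm i} c/24}$. The condition $\alpha(T)^3 = 1$ then gives ${\rm e}^{-\pi {\rm i} c/4} = 1$, that is, $c \in 8\ZZ$.

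The remainder of the plan rules out $c \leq 0$ under the hypothesis $V \neq \CC$. The clean device is to cube: set $g \df f_1^3$. Since $\alpha^3 = 1$ by Lemma~\ref{lempsi}, the function $g$ is a weight-zero modular function on $\Gamma$ with \emph{trivial} multiplier, holomorphic on $\uhp$ by Zhu's theorem, with $q$-expansion $q^{-c/8}\bigl(1 + O(q)\bigr)$. By the classical identification $M^!_0 = \CC[j]$, the function $g$ is thus a polynomial in $j$.

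Now, for any nonzero polynomial $P(j) = \sum_{n=0}^{N} a_n j^n$ with $a_N \neq 0$, the $q$-expansion begins as $a_N q^{-N} + \cdots$, so $P(j)$ has a pole of order $N \geq 0$ at the cusp and in particular never vanishes there. Applied to $g$, a case analysis on $c \in 8\ZZ$ gives the following. When $c < 0$, the expansion $g = q^{|c|/8}(1 + O(q))$ shows $g$ vanishes at the cusp, forcing $g = 0$ and hence the contradiction $f_1 = 0$. When $c = 0$, the polynomial $g$ is a constant equal to $1$, so $f_1$ is everywhere a cube root of unity on the connected set $\uhp$; holomorphicity and $f_1 = 1 + O(q)$ then force $f_1 \equiv 1$, which yields $\dim V_n = 0$ for all $n \geq 1$ and $V = \CC$, against hypothesis. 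Hence $c > 0$, and combined with $c \in 8\ZZ$ this gives $c \in 8\NN$.

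The only subtle step is checking that $g = f_1^3$ genuinely lies in $\CC[j]$; this relies on Lemma~\ref{lempsi} for the triviality of the multiplier and on Zhu's theorem for the holomorphicity of $f_1$ throughout $\uhp$. After that, the argument reduces to the standard description of $M^!_0$ together with a three-way case split on the sign of $c$.
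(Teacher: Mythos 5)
Your proof is correct, and while the first half coincides with the paper's, the positivity argument takes a genuinely different route. For $c\in 8\ZZ$ you and the paper do the same thing: reading $\alpha(T)={\rm e}^{-2\pi{\rm i}c/24}$ off \eqref{fodef} and invoking Lemma~\ref{lempsi} (the paper phrases it as $f_1(\tau+3)=f_1(\tau)$, which is identical in content). For $c>0$ the paper works with $f_1$ itself: it observes that $f_1$ is a nonconstant modular function, holomorphic on $\uhp$, invariant under either $\Gamma$ or the index-$3$ kernel $\Gamma^{(3)}$ of $\psi^4$, and since both groups have a unique cusp the unavoidable pole must sit at $\infty$, forcing $-c/24<0$. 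You instead cube away the character, identify $g=f_1^3$ with an element of $M^!_0=\CC[j]$, and read the conclusion off the elementary pole-order structure of polynomials in $j$, splitting into the cases $c<0$ and $c=0$. What your version buys is that it never needs any facts about the cusps of $\Gamma^{(3)}$, and it makes explicit the degenerate case that the paper compresses into the bare assertion that $f_1$ is nonconstant when $V\neq\CC$ (your $c=0$ analysis, forcing $f_1\equiv 1$ and hence $V=\CC\mathbf{1}$, is exactly the justification of that assertion); the paper's version is shorter and treats all the nonpositive values of $c$ in one stroke. One cosmetic point: in your $c=0$ case you should note that $V=\CC\mathbf{1}$ forces $\omega\in V_2=0$, so $V$ really is the trivial VOA $\CC$ of the statement, but this is the same implicit step the paper takes.
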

 \begin{proof}Since $T$ acts on $\uhp$ as the translation $\tau\mapsto \tau+1$, Lemma \ref{lempsi} yields $f_1(\tau +3)=f_1(\tau)$. In terms of the $q$-expansion \eqref{fodef}, this says
\begin{gather*}
{\rm e}^{-2\pi{\rm i}c(\tau+3)/24}\sum_n (\dim V_n)q^n= q^{-c/24}\sum_n (\dim V_n)q^n\neq 0.
\end{gather*}
It follows that $\tfrac{c}{8}\in\ZZ$.

It remains to prove that $c > 0$. Indeed, we have seen that $f_1(\tau)$ is a nonconstant modular function on $\Gamma$, holomorphic throughout~$\mathcal{H}$, and having a character of order~$1$ or~$3$. As such, it is well-known that $f_1(\tau)$ can only have poles at the cusp, and it has at least one such pole. But the invariance group is either $\Gamma$ itself or the normal subgroup $\Gamma^{(3)}$ of index~$3$, the kernel of~$\psi^4$. Both of these groups have a unique cusp, namely~$\infty$. So $\infty$ is a pole for $f_1(\tau)$ and this means that $-\tfrac{c}{24} < 0$. The corollary is proved.
\end{proof}

\begin{ex}The affine Lie algebra $E_{8,1}$ of level~$1$, which is isomorphic to the $E_8$ lattice theory $V_{E_8}$, is a well-known example of a holomorphic VOA of central charge $c=8$. Indeed, it is the only such VOA~\cite{DMHolomorphic}, and
$\alpha=\psi^4$. Furthermore the usual tensor product $U\otimes V$ of holomorphic VOAs is again holomorphic and central charge is additive over tensor products. As a result, the $p$-fold tensor product $E_{8, 1}^{\otimes p}$ is a holomorphic VOA with $c=8p$. This shows that for a nontrivial holomorphic VOA, $c$ may be any positive integer multiple of~$8$, and that any character $\alpha$ satisfying the conclusions of Lemma~\ref{lemalpha} can occur.
\end{ex}

A better way to organize some of these assertions is as follows: the set $\cS$ of isoclasses of holomorphic VOAs is a commutative graded semigroup with respect to tensor product and with identity $\CC$~-- where the grading semigroup is $8\NN_0$ and comes from the central charge. Thus $\cS_{8p}$ is the set of isoclasses of holomorphic VOAs with $c=8p$.

The question of \emph{classifying} holomorphic VOAs can be an interesting one, depending on what we mean by `classifying'. The size and complexity of $\cS_{8p}$ grows rapidly with $p$. To get a~measure of this, introduce the set $\cL$ of isoclasses of \emph{even, unimodular} lattices regarded as a~semigroup with respect to orthogonal direct sum. If $L\in \cL$ there is a well-known construction of a~VOA~$V_L$ called a lattice VOA \cite{Borcherds, FLM}. Moreover $V_L$ is holomorphic~\cite{DongEven} and the central charge satisfies $c=\rk(L)$. This construction defines the injection $\varphi$ in the diagram below. Note that Corollary~\ref{corc>0} implies that the rank of an even unimodular lattice is divisible by~$8$, a result originally due to Minkowski. In this way we get a commuting diagram of $8\NN_0$-graded semigroups
\begin{gather*}
\xymatrix{ \cL\ar[rr]^{\varphi}\ar[dr]_{rk}& & \cS.\ar[dl]^c\\
&8\NN_0&}
\end{gather*}

Another theorem of Minkowski says that $\cL_{8p}$ is \emph{finite} for each $p$ and it seems very likely that this remains true for $\cS_{8p}$. Indeed, the best way to see the finiteness of $\cL_{8p}$ is by way of a~\emph{mass formula}, see for example~\cite{ConwaySloane}. It is tempting to speculate that there is some sort of mass formula for holomorphic VOAs, but this is unknown. In fact we know very little about $\cS_{8p}$ for $p > 3$. It is known~\cite{DMHolomorphic} that $\varphi$ is bijective at grade~$8$ and $16$, which amounts to $\cS_8=\{V_{E_8}\}$ and $\cS_{16}=\{V_{E_8\perp E_8}, V_{\Gamma_{16}}\}$, where $\Gamma_{16}$ is the \emph{spin lattice} of rank~$16$.

The fundamental classification of Niemeier \cite{ConwaySloane, Niemeier} says that $\abs{\cL_{24}}=24$ whereas Schellekens famously proposed a list of $71$ holomorphic VOAs with $c=24$ in his paper \cite{Schellekens} and conjectured that these are all of the VOAs in~$\cS_{24}$. Giving a rigorous proof of this has turned out to be a major project in its own right though by now it is done~-- except for the uniqueness of the Moonshine module $V^{\natural}$~-- thanks to the work of many. A good survey of this undertaking can be found in the paper of Lam and Shimakura~\cite{LamShimakura}.

It is a well-known consequence of the mass formula that the cardinality of $\cL_{8p}$ is enormous for any $p > 3$~-- for a discussion see, for example~\cite{Serre}. The cardinality of $\cS_{8p}$ is even greater, so it is very large indeed!

\section{Vector-valued modular forms}\label{SD}\label{s:vvmf}

The earlier discussion on character vectors and Zhu's theorem led us naturally to the consideration of functions that transform under the action of the modular group according to a~specific linear representation. We formalize the notion of such functions as \emph{vector-valued modular forms} as follows: given a representation $\rho \colon \Gamma \to \GL_d(\CC)$, not necessarily with finite image or congruence kernel, a~\emph{weakly holomorphic vector-valued modular form} of weight $k \in \ZZ$ for $\rho$ is a~holomorphic function $F \colon \uhp \to \CC^d$ such that the following two conditions are satisfied:
\begin{enumerate}\itemsep=0pt
\item[1)] $F(\gamma \tau) = (c\tau+d)^k\rho(\gamma)F(\tau)$ for all $\gamma = \stwomat abcd \in \Gamma$, and
\item[2)] $F$ is meromorphic at the cusp of $\Gamma$.
\end{enumerate}
\emph{Meromorphy at the cusp} means that for some (and hence any) choice of matrix $L$ such that $\rho(T) = {\rm e}^{2\pi{\rm i} L}$, the function $\tilde F(\tau) = {\rm e}^{-2\pi{\rm i} L\tau}F(\tau)$, which satisfies $\tilde F(\tau+1)=\tilde F(\tau)$ by construction, has a meromorphic $q$-expansion. Let $M^!(\rho)$ denote the space of weakly holomorphic modular forms for $\rho$.

 Theorem~\ref{thmZhu} shows that the character vector $F_V$ associated to a strongly-regular VOA is a weakly holomorphic vector-valued modular form of weight zero. By the solution to the unbounded denominator conjecture in~\cite{CDT}, the representation underlying $F_V$ is known to be unitarizable with a congruence kernel. For unitarizable representations such as this, there do not exist nonzero modular forms of negative weight that are holomorphic at the cusp (cf.~\cite{KnoppMason} or apply the maximum principle for harmonic functions on a compact manifold). Therefore, in the study of $F_V$, cuspidal poles are unavoidable in general. A quantitative version of this fact can be seen more easily, without appealing to unitarity, via a theorem of Dong--Mason~\cite{DMRational} which says that the \emph{effective central charge} of $V$ satisfies $\tilde{c}>0$ for any nontrivial strongly regular VOA~$V$. Bearing in mind the definition of the \emph{effective central charge} of $V$ as $\tilde{c}\df c-24h_{\min}$ where $h_{\min}$ is the minimum of the conformal weights~$h_j$, the inequality says exactly that $F_V$ has at least one cuspidal pole.

For any fixed choice of \emph{exponent matrix} $L$ such that $\rho(T) = {\rm e}^{2\pi{\rm i} L}$, we can define a finite-dimensional space of modular forms $M_k(\rho,L)$ for each integer weight $k$. The space $M_k(\rho,L)$ contains all weakly holomorphic forms whose $q$-expansion has the form
\[
 F(q) = q^{L}f(q),
\]
where $q^L \df {\rm e}^{2\pi{\rm i} L\tau}$ and $f(q) \in \pseries{\CC^d}{q}$. This definition depends on the choice of logarithm $L$ and so it is a crucial piece of the notation. It is explained in \cite{CandeloriFranc} that the geometric role of $L$ is to define an extension of a certain vector bundle to the cusp, so that $M_k(\rho,L)$ is the space of global sections of this bundle. For applications in rational VOA theory, $L$ is diagonal with eigenvalues related to the central charge and conformal weights of the underlying VOA.

\begin{ex}
 Let $\psi$ be the character \eqref{psidef} of $\Gamma$ associated to $\eta^2$. The possible choices of exponents are $\tfrac{1}{12}+n$ for $n \in \ZZ$. In this case
 \[
 \cdots \subseteq M_{k}\big(\psi, \tfrac{13}{12}+n\big) \subseteq M_{k}\big(\psi, \tfrac{1}{12}+n\big) \subseteq \cdots
\]
and $M_k^\dagger(\psi) = \bigcup_{n \ll 0} M_{k}\big(\psi,\tfrac{1}{12}+n\big)$ is the space of weakly holomorphic modular forms of weight~$k$ for $\psi$. Since $\eta^2 = q^{1/12}\prod_{n\geq 1}(1-q^n)^2$, we find that $\eta^2 \in M_1\big(\psi,\tfrac{1}{12}\big)$ but $\eta^2 \not \in M_{1}\big(\psi,\tfrac{13}{12}\big)$. The space $M_1\big(\psi,\tfrac{13}{12}\big)$ contains all weakly holomorphic forms of weight $1$ that vanish to order at least~$\tfrac {13}{12}$ at the cusp, and so it is in fact the zero vector-space.
\end{ex}

For a representation $\rho$ of $\Gamma$ and a choice of exponents $L$, let
\begin{gather*}
M(\rho,L) \df \bigoplus_{k \in \ZZ} M_k(\rho,L)
\end{gather*}
 denote the module of vector-valued modular forms for $(\rho,L)$. The space $M(\rho,L)$ is a graded module over the graded ring $M\df \CC[E_4,E_6]$ of modular forms of level one.
\begin{thm}[free module theorem]\label{FMT}
The $M$-module $M(\rho,L)$ is free of rank $d=\dim \rho$.
\end{thm}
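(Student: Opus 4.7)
My plan is to prove the free-module theorem via the classical inductive strategy of Marks--Mason: construct $d$ generators of successively minimum weight, and verify linear independence and generation using a Wronskian determinant.

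The first step is to establish the basic structural properties of $M(\rho,L)$ as a $\ZZ$-graded $M$-module. A valence-type estimate bounding the total order of any nonzero vector-valued form (including its cuspidal exponents dictated by $L$) by a linear function of the weight $k$ shows that each graded piece $M_k(\rho,L)$ is finite-dimensional and that $M_k(\rho,L) = 0$ for $k$ sufficiently negative. A complementary growth estimate, $\dim M_k(\rho,L) = (d/12)k + O(1)$ as $k \to \infty$ (obtainable from Riemann--Roch on the compactified modular stack, or by direct $q$-expansion analysis), shows that $M(\rho,L)$ has rank exactly $d$ as an $M$-module.

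Next, inductively choose forms $F_1, F_2, \ldots$, where $F_j$ has the minimum possible weight $k_j$ among elements of $M(\rho,L) \setminus \sum_{i<j} M F_i$. Since the $M$-rank of $M(\rho,L)$ is $d$, this process terminates in exactly $d$ steps. Assemble $F_1, \ldots, F_d$ as columns of a $d \times d$ matrix $\Phi = [F_1 \mid \cdots \mid F_d]$; its determinant $\det \Phi$ is a scalar modular form of weight $k_1+\cdots+k_d$ with character $\det \rho$, whose $q$-expansion at the cusp begins with $q^{\tr L}$ times the determinant of the leading coefficient matrix. The key claim $\det \Phi \not\equiv 0$ follows from the minimality in the construction: a meromorphic linear dependence among the columns descends, via the modular transformation law, to a $\Frac(M)$-linear dependence, and clearing denominators using the UFD structure of $M = \CC[E_4,E_6]$ yields a nonzero element of $M(\rho,L)$ of weight strictly less than some $k_j$ and outside $\sum_{i<j} M F_i$, contradicting the choice of $F_j$.

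Finally, given any $F \in M(\rho,L)$, Cramer's rule expresses $F = \sum_j g_j F_j$ with $g_j \in \Frac(M)$, and the same minimality argument upgrades each $g_j$ to lie in $M$ itself. The main technical obstacle is this minimality-based nonvanishing of the Wronskian together with the promotion of rational coefficients to $M$-coefficients: while each step is natural, making the UFD-based denominator-clearing argument compatible with the weight-minimality selection requires some care, especially for irreducible factors (like $\Delta$) that vanish at the cusp, where one must track the interplay between interior denominators and the cuspidal exponent constraint encoded by $L$.
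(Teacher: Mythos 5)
There is a genuine gap at the heart of your argument: the step where a $\Frac(M)$-linear dependence among the greedily chosen $F_1,\dots,F_d$ is supposed to contradict minimality ``by clearing denominators using the UFD structure of $M$.'' Nothing in that deduction uses any property of $M(\rho,L)$ beyond its being a graded, torsion-free module over $M=\CC[E_4,E_6]$, and for such modules the conclusion is simply false. Take the ideal $(x,y)\subset\CC[x,y]$ viewed as a graded module: the same greedy selection yields $F_1=x$ and then $F_2=y$ (of minimal weight outside $M F_1$), there is the relation $y\,F_1-x\,F_2=0$ with coprime coefficients, and yet no element of lower weight outside $MF_1$ exists and no contradiction arises~--- the module is torsion-free of rank $1$ but not free. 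So minimality plus unique factorization cannot by itself force $\det\Phi\not\equiv 0$, nor can it ``upgrade'' the Cramer coefficients $g_j\in\Frac(M)$ to $M$. Relatedly, your assertion that the greedy process ``terminates in exactly $d$ steps because the rank is $d$'' presupposes the theorem: membership in $\sum_{i<j}MF_i$ is weaker than $\Frac(M)$-dependence (in the example $\mathfrak m\oplus\CC[x,y]$ with $\mathfrak m=(x,y)$, the process takes three steps although the rank is two), and even termination requires finite generation, which you have not established.

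What is missing is the specifically modular (or geometric) input that makes $M(\rho,L)$ free rather than merely torsion-free. Two standard ways to supply it: (i) show first that $M(\rho,L)$ is finitely generated (e.g., by exhibiting $d$ forms with nonvanishing determinant and clearing that determinant into a free module), then observe that $E_4$ and $E_6$ have disjoint zero sets in $\uhp$ and do not vanish at the cusp, so $(E_4,E_6)$ is a regular sequence on $M(\rho,L)$; depth $2$ plus Auslander--Buchsbaum (in the graded setting) gives freeness, and your growth estimate then pins the rank at $d$; or (ii) follow the route the paper actually cites, namely Candelori--Franc's identification of $M_k(\rho,L)$ with sections of a rank-$d$ vector bundle on the compactified modular orbifold, where freeness follows from the splitting of vector bundles on that one-dimensional stack (Marks--Mason handle the holomorphic-exponent case by related structural arguments). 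Note also that your opening ``valence-type estimate'' for a single vector-valued form is not available for general $\rho$ and $L$, since the components are not individually modular; finite-dimensionality and vanishing in low weight themselves require the unitarizable-case estimates of Knopp--Mason or the geometric argument just mentioned. The paper itself gives no proof of this theorem~--- it defers to Marks--Mason and Candelori--Franc and only proves the weakly holomorphic variant (Theorem~\ref{FMT2}) assuming it~--- so if you want a self-contained proof you should build it around one of the inputs above rather than around minimal-weight selection alone.
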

\begin{proof}
This was first proved in \cite{MarksMason} for holomorphic forms. It was then generalized to arbitrary exponents in \cite{CandeloriFranc} using the geometric interpretation of the spaces $M_k(\rho,L)$. See also Theorem~\ref{FMT2} below.
\end{proof}

When $\dim \rho \leq 5$, Marks~\cite{Marks} described the weights of generators of $M(\rho,L)$ in terms of group-theoretic data related to $\rho$. Some of these structural results were expanded upon in~\cite{FM4}, but in general it remains a difficult problem to desribe the free-module structure of $M(\rho,L)$. However, if $\rho$ is unitary and $\rho(-1)=I$, a case of interest in VOA theory, the graded module structure of $M(\rho,L)$ can be determined exactly using the Riemann--Roch theorem, as described in \cite{CandeloriFranc, KnoppMason}. These structural results for $M(\rho,L)$ can be used to show that elements of $M(\rho,L)$ satisfy \emph{modular linear differential equations} of particularly simple forms. In weight zero, the case of interest to us thanks to Zhu's theorem, such modular linear differential equations are equivalent to ordinary differential equations on the modular curve, which is the perspective of~\cite{BantayGannon} and~\cite{Gannon}. In low weights it is easy to use the known structure of $M(\rho,L)$ to determine and solve these equations explicitly: see for example~\cite{FM1, FM2, FM3}, and Sections~\ref{s:rank2},~\ref{s:rank3} and~\ref{s:rank4} below.

If $F\in M_0(\rho,L)$, then the derivative $\frac{1}{2\pi{\rm i}}\frac{{\rm d}}{{\rm d}\tau}F(\tau) = q\frac{{\rm d}}{{\rm d}q}F(q)$ is a modular form in $M_2(\rho,L)$. Continuing to differenitate with respect to~$\tau$ will however not produce modular forms of higher and higher weights. Rather, one gets \emph{quasi-modular forms}~\cite{Zagier}. To recover a modular form, one instead defines a differential operator $D_k \colon M_k(\rho,L) \to M_{k+2}(\rho,L)$ by
\[
 D_k(F) = q\frac{{\rm d}}{{\rm d}q}F -\frac{k}{12}E_2F,
\]
where $E_2$ is as in the notation. We then let $D$ act on $M(\rho,L)$ as a graded derivation. Since $M(\rho,L)$ is free of rank $d = \dim \rho$ over $M$, it follows that every element of $M(\rho,L)$ satisfies a~differential equation in powers of $D$, with coefficients taken in $M$. In particular, the character vectors of strongly regular VOAs satisfy such equations. If $\rho$ is irreducible then the degree of this equation must be $\dim \rho$, and in any case it will be no greater than this.

\begin{rmk} Some care must be taken with the notation when using these operators. For all $k$, we have a perfectly respectable differential operator $D_k\colon \mathfrak{F}\rightarrow\mathfrak{F}$. On the other hand, we have defined $D$ to be the graded differential operator whose restriction to $M_k(\rho, L)$ is $D_k$. Thus, for example, below when we write expressions such as $D^2$, we mean the operator that acts on~$M_k(\rho,L)$ as~$D_{k+2}\circ D_k$.
\end{rmk}

There is an equivalent version of the free-module theorem that is useful for the study of character vectors $F_V$, since it focuses on the space $M^!_0(\rho)$ of modular forms for $\rho$ of weight $0$ with at worst finite order poles at cusps, which is the natural home of $F_V$.
\begin{thm}
 \label{FMT2}
 Suppose that $\rho(-1)$ has $1$ as eigenvalue with multiplicity $e_0$, and $-1$ as eigenvalue with multiplicity $e_1$. Then the space $M^!_k(\rho)$ of weakly-holomorphic forms of weight $k$ is free of rank $e_u$ over $\CC[j]$, where $k \equiv u \pmod{2}$. In particular, if $\rho(-1) = 1$, then $M^!_0(\rho)$ is free of rank $\dim \rho$ over $\CC[j]$.
\end{thm}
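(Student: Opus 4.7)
The plan is to reduce the theorem to the free-module theorem (Theorem~\ref{FMT}) by inverting $\Delta$, and then to describe the weight-$k$ pieces using the fact that $M_n^!$ is a cyclic $\CC[j]$-module for each even $n$.

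First, I would set up a parity decomposition. Since $-I$ is central in $\Gamma$, the operator $\rho(-I)$ commutes with all of $\rho(\Gamma)$, so its $\pm 1$-eigenspaces are $\rho$-stable; write $\rho=\rho_0\oplus\rho_1$ where $\rho_u(-I)=(-1)^u I$, so that $\dim\rho_u=e_u$ and $M^!(\rho)=M^!(\rho_0)\oplus M^!(\rho_1)$. Applying the transformation law with $\gamma=-I$ forces $M_k^!(\rho_u)=0$ whenever $k\not\equiv u\pmod 2$. It therefore suffices to fix $u$ and $k$ with $k\equiv u\pmod 2$ and show that $M_k^!(\rho_u)$ is free of rank $e_u$ over $\CC[j]$.

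Next, I would apply Theorem~\ref{FMT} to $\rho_u$: fix an exponent $L_u$ with $\rho_u(T)={\rm e}^{2\pi{\rm i}L_u}$ and choose a free $M$-basis $\{F_i\}_{i=1}^{e_u}$ of $M(\rho_u,L_u)$, with weights $k_1,\dots,k_{e_u}$. Each $k_i$ satisfies $k_i\equiv u\pmod 2$ by the parity observation applied within $M(\rho_u,L_u)$. Since $\Delta$ is a unit in $M^!$, this same family remains a free $M^!$-basis of $M^!(\rho_u)=M(\rho_u,L_u)\otimes_M M^!$. Taking the weight-$k$ piece yields
\[
M_k^!(\rho_u)=\bigoplus_{i=1}^{e_u} M^!_{k-k_i}\, F_i,
\]
where each exponent $k-k_i$ is even.

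The theorem therefore reduces to the scalar claim that $M^!_n$ is free of rank one over $\CC[j]$ for every even $n\in\ZZ$. Since $\Delta$ is an invertible element of weight $12$, multiplication by $\Delta$ gives a $\CC[j]$-linear isomorphism $M^!_n\to M^!_{n+12}$, so we may assume $n\in\{0,2,4,6,8,10\}$. In each case one writes down an explicit weight-$n$ generator (for instance $1$, $E_4^2E_6\Delta^{-1}$, $E_4$, $E_6$, $E_4^2$, $E_4E_6$ respectively), and then uses the identities $E_4^3=j\Delta$ and $E_6^2=(j-1728)\Delta$ to rewrite any monomial $E_4^aE_6^b\Delta^c\in M^!_n$ as an element of $\CC[j]$ times that generator; injectivity of $\CC[j]\to M^!_n$ is automatic. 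This last bookkeeping is the only non-formal content in the argument and is where I expect the main obstacle to lie, though the essential point is simply that $M_0^!=\CC[j]$ together with the invertibility of $\Delta$ forces every even-weight component of $M^!$ to be cyclic over $\CC[j]$.
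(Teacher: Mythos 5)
Your argument is correct, and it reaches the same destination as the paper's proof, namely deducing Theorem~\ref{FMT2} from Theorem~\ref{FMT} by trading powers of $\Delta$, but the packaging is genuinely different. The paper replaces $L$ by $L+nI$ so that every basis weight $k_i$ lies far below $k$, chooses scalar forms $b_i$ with $M_{k-k_i}$ spanned by $b_i, jb_i,\dots, j^{t_i}b_i$, sets $B_i=b_iF_i$, and then verifies independence and spanning of the $B_i$ over $\CC[j]$ by hand, clearing denominators with $\Delta^N$ in both directions. You instead invert $\Delta$ once and for all: since $M^!=M\big[\Delta^{-1}\big]$ and $M^!(\rho_u)=M(\rho_u,L_u)\big[\Delta^{-1}\big]$, the basis from Theorem~\ref{FMT} remains a basis over $M^!$, the weight-$k$ graded piece is $\bigoplus_i M^!_{k-k_i}F_i$, and everything reduces to the scalar lemma that $M^!_n$ is free of rank one over $\CC[j]$ for every even $n$, proved by listing a generator in each residue class mod $12$ and rewriting monomials via $E_4^3=j\Delta$ and $E_6^2=(j-1728)\Delta$. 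That rewriting does go through: writing $a=3a'+r$, $b=2b'+s$ with $0\leq r\leq 2$, $0\leq s\leq 1$, the value of $4r+6s$ (and hence the residual power of $\Delta$) is pinned down uniquely by $n$ mod $12$, so every monomial lands in $\CC[j]$ times your stated generator, and injectivity is automatic because $j$ is nonconstant. Your route buys a clean separation of the vector-valued and scalar parts, with explicit cuspidal generators of each $M^!_{k-k_i}$; the paper's route avoids any case analysis mod $12$ and uses nothing beyond the classical bases of the holomorphic spaces $M_{k-k_i}$. One point you should make explicit: the identification $M^!(\rho_u)=M(\rho_u,L_u)\otimes_M M^!$ rests on the observation, spelled out in the paper's spanning step, that a weakly holomorphic form has a pole of finite order at the cusp, so $\Delta^N F\in M(\rho_u,L_u)$ for $N\gg 0$; it is immediate from the definition but is the hinge of the localization claim.
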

\begin{proof}
 This result appears as Theorem~3.3 of~\cite{Gannon} where the proof was sketched. We include the details here, as they illustrate how to pass between the two versions of the free module theorem, which can be useful in applications.

First, since $S^2=-I$ is in the center of $\Gamma$, there is a decomposition $\rho = \rho_0\oplus \rho_1$ where $\rho(-I)$ acts by $1$ and $-1$ on the respective pieces. Since $M^!(\rho) = M^!(\rho_0)\oplus M^!(\rho_1)$, we may without loss of generality assume that $\rho = \rho_0$, so that $\rho(S^2) = 1$ and $k$ is even. Furthermore we may assume without loss of generality that $\rho(T)$ is in Jordan canonical form.\footnote{For applications in nonlogarithmic VOA theory, $\rho(T)$ is in fact \emph{diagonal}, but we do not need to make this hypothesis here.}

Choose any exponents $L$ for $\rho(T)$ and let $F_1,\dots, F_d$ be a free basis for $M(\rho,L) \subseteq M^!(\rho)$ over $\CC[E_4,E_6]$. Let the weights be $k_1,\dots, k_d$. We may assume $k_j \ll k$ for all $j$, at the possible cost of replacing $L$ by $L +nI$ for some integer $n \leq 0$, and with corresponding free basis $\Delta^{n}F_1,\dots, \Delta^nF_d$, where $\Delta$ denotes the usual Ramanujan $\Delta$-function. Therefore, each space $M_{k-k_i}$ is nonzero and contains a basis of the form $b_i, jb_i, \dots, j^{t_i}b_i$. Set $B_i = b_iF_i \in M_k(\rho,L)$ for each $i$, and suppose we have a linear relation
\[\sum_{i=1}^d P_i(j)B_i=0\]
for polynomials $ P_i(j) \in \CC[j]$. Since $j = E_4^3/\Delta$ and $\Delta = \big(E_4^3-E_6^2\big)/1728$, there exists some~$N$ such that for all $i$ we can write $\Delta^NP_i(j) = Q_i(E_4,E_6)$ for $Q_i(E_4,E_6) \in \CC[E_4,E_6]$. If we multiply the displayed equation above by $\Delta^N$, it then follows by the free-module theorem for~$M(\rho,L)$ that $\Delta^NP_i(j) = 0$ for all $i$. Hence $P_i(j) = 0$ for all~$i$. It follows that the $B_i$ are linearly independent over~$\CC[j]$.

Now we show that the $B_i$ span $M_k^!(\rho)$ over $\CC[j]$, which will conclude the proof. Let $F \in M_k^!(\rho)$. Since $F$ has a pole at infinity of finite order, there exists $N \geq 0$ such that $\Delta^NF$ is holomorphic at the cusp, so that $\Delta^NF \in M_{12N+k}(\rho,L)$. Therefore, by the free-module theorem for $M(\rho,L)$ we can write
\[
\Delta^NF = \sum_{i=1}^d Q_iF_i,
\]
where $Q_i \in M_{12N+k-k_i}$. Now $M_{12N+k-k_i}$ contains the basis
\[
 \Delta^Nb_i, \Delta^Njb_i,\dots, \Delta^Nj^{t_i+N}b_i.
\]
But now we see that we can write $\Delta^NF = \sum_{i=1}^d \Delta^NP_i(j)B_i$ for some polynomials $P_i(j) \in \CC[j]$. Cancelling the power of $\Delta^N$ completes the proof.
\end{proof}

\section{Zhu theory revisited}\label{SZhutheory}
\label{s:zhu2}
The purpose of this section is to formulate a revised version of some of the main results in the papers of Zhu \cite{Zhu} and Dong--Li--Mason \cite{DLMModular} in a somewhat more canonical manner that emphasizes the r\^{o}le of vector-valued modular forms and the modular derivative $D$. The main result is stated below as Theorem~\ref{thmgradedZ}. First we assemble the needed pieces in a series of subsections. It will be convenient in this section to work with the renormalized Eisenstein series~$G_k$ as in the notation.

\subsection[The algebras R and S of differential operators]{The algebras $\boldsymbol{R}$ and $\boldsymbol{S}$ of differential operators}\label{SSDO}

Let $R$ denote the \textit{Ore extension} of $M$ by the derivation $D$:
\begin{gather*}
R\df M\langle D \rangle.
\end{gather*}
Multiplication in $R$ is determined by the identity
\begin{gather*}
[D, f] \df (Df-fD)=D(f), \qquad f \in M.
\end{gather*}
The ring $R$ is an entire, associative algebra and elements may be written as (generally noncommutative) polynomials $\sum_i f_iD^i$ for $f_i\in M$. For additional details about such algebras, cf.\ \cite[Chapter 12]{Cohn}. The ring $R$ becomes a $2\ZZ$-graded algebra if we view~$M$ with its usual grading by weight and furnish~$D$ with degree~$2$.

 Given an $\ZZ$-graded entire algebra $A=\oplus_n A_n$, the \emph{Euler operator} of $A$ is the endomorphism $E\colon a\mapsto na$ for $a\in A_n$. The operator $E$ is a derivation of $A$. Now let $E$ be the Euler operator for $R$. We may adjoin $E$ to $R$ to get a second Ore extension
\begin{gather*}
S\df R\langle E\rangle=M\langle D, E\rangle,
\end{gather*}
which is again an associative algebra satisfying
\begin{gather*}
ED-DE= 2D, \qquad Ef-fE= kf, \qquad f \in M_k.
\end{gather*}
We inflict $E$ with degree $0$. Then $S$ is also a $2\ZZ$-graded algebra.

\begin{rmk}
 Both the algebra of derivations $\Der(M)$ and the algebra $S^-$, which means $S$ equipped with the bracket $[a, b]\df ab-ba$ for $a,b\in S$ provide us with Lie algebras which ostensibly have VOA connections. For example, let $W^+$ be the Lie subalgebra of the Witt algebra $W$ spanned by operators $L(n)$ for $n \geq 0$ with bracket $[L(m), L(n)]=(m-n)L_{m+n}$. Regarding $\CC[G_4]E$ as a Lie subalgebra of (either of) the aforementioned Lie algebras, there is an isomorphism $\CC[G_4]E\stackrel{\cong}{\longrightarrow} W^+$ defined by $G_4^mE \mapsto -4L(m)$. There is a similar result concerning~$\CC[G_6]E$.
\end{rmk}

 \subsection{The VOA on the cylinder}
 One of the main geometric ideas in \cite{Zhu} is to formulate the VOA-theoretic version of passage from genus $0$ to genus $1$, that is, sphere to torus (cylinder might be more accurate). This is fundamental for clarifying the r\^{o}le of modular-invariance. We give a brief description, although we will not need too many details. For additional background, cf.\ \cite{DLMModular, MasonTuite, Zhu}, as well as the references cited below.

 For $z\in\CC$ the function $z\mapsto {\rm e}^{z}-1$ maps the punctured Riemann sphere $S$ to an infinite cylinder. Now the VOA axioms are, in a strong sense, related to $S$. The justification for such a statement involves an appeal to the alternative approach to the Jacobi identity in terms of rational functions, see~\cite{FrenkelHuangLepowsky}. This perspective is called "duality" in the physics literature. The formal VOA-theoretic version involves an assignment
 \begin{align*}
(V, Y, \omega, \mathbf{1}) &\mapsto (V, Y[\ ], \widetilde{\omega}, \mathbf{1}).
\end{align*}

The $4$-tuple on the right is called either the VOA \textit{on the cylinder}, or the
\textit{square bracket VOA}, and involves the following ingredients: the underlying Fock space is the same linear space $V$ as the original, and the vacuum elements are also unchanged. The vertex operators are
\begin{gather*}
Y[v, z] \df Y\big({\rm e}^{zL(0)}v, {\rm e}^z-1\big),\qquad
\widetilde{\omega} \df \omega-\tfrac{c}{24}\mathbf{1},
\end{gather*}
where $c$ is the central charge of $V$. It is a remarkable fact that, with this structure, the VOA on the cylinder, which we denote by $V[\ ]$ is, as the name suggests, itself a VOA with Virasoro element $\widetilde{\omega}$. In fact, $V$ and $V[ \ ]$ are isomorphic VOAs, a fact proved by Zhu~\cite{Zhu} in special cases and in general by Lepowsky.

Perhaps the most important aspect (at least for us) is that $V$ has now been equipped with two rather different conformal gradings, coming from the two different Virasoro elements:
\begin{gather} \label{dirsums}
V =\bigoplus_{n\geq0} V_n = \bigoplus_{n\geq0} V_{[n]}.
\end{gather}
Here, if $L[0]$ is the zero mode of $\widetilde{\omega}$, then
\begin{gather*}
V_{[n]} \df\{v\in V\mid L[0]v=nv\}.
\end{gather*}
The two vertex operators $Y(v, z)$ and $Y[v, z]$ are related by rather complicated formulas that are awkward to deal with. For example, we have
\begin{gather*}
L[0] = L(0)+\sum_{n=1}^{\infty}\tfrac{(-1)^n}{n(n+1)}L(n).
\end{gather*}
 We emphasize that equation \eqref{dirsums} is an equality of linear spaces, not $\ZZ$-graded linear spaces. Generally we only have $V_{[n]} \subseteq \oplus_{m\leq n} V_m$.

 \subsection[The graded space M otimes V]{The graded space $\boldsymbol{M \otimes V[\ ]}$} \label{SSMV}

 We continue to let $V$ be a VOA with $V[\ ]$ the isomorphic VOA on the cylinder. In particular, $V$~is equipped with the square bracket conformal grading \eqref{dirsums}. We will be dealing with the tensor product $M\otimes V$ which we consider as a $\ZZ$-graded space equipped with the tensor product grading defined by
\begin{gather*}
(M\otimes V)_n\df \bigoplus_{\ell+k=n} M_{\ell}\otimes V_{[k]}.
\end{gather*}

There are several ways to consider $M\otimes V$ as a vertex ring over $M$ \cite{MasonPierce}, although none of these structures seem to be particularly relevant to the present context. The structure of greatest utility for us is presented in the next result.
\begin{lem}
 \label{lemD}
 $M \otimes V$ is a $\ZZ$-graded left $S$-module where the action for $g\in M$, $f\in M_{\ell}$ and $v\in V_{[k]}$ is given by
 \begin{gather*}
g(f\otimes v) \df gf\otimes v, \\
D(f\otimes v)\df D(f)\otimes v +f\otimes D(v), \\
D(v)\df L[-2]v -\sum_{\ell\geq 2} G_{2\ell} \otimes L[2\ell-2]v,\\
E(f\otimes v)\df (k+\ell)(f\otimes v).
\end{gather*}
\end{lem}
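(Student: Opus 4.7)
The plan is to verify directly the three defining relations of the Ore extension $S=M\langle D,E\rangle$,
\[
[D,f]=D(f), \qquad [E,f]=kf \text{ for } f\in M_k, \qquad [E,D]=2D,
\]
on the space $M\otimes V$. Since $S$ is generated as a $\CC$-algebra by $M$, $D$, and $E$ subject only to these relations, any triple of compatible actions satisfying them extends uniquely to a graded left $S$-module structure; the rest is bookkeeping.

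First I would confirm that the displayed formula for $D(v)$ is a well-defined endomorphism of degree $2$. The crucial point is finiteness of the sum: for $v\in V_{[n]}$, the mode $L[2\ell-2]$ shifts square-bracket weight by $-(2\ell-2)$, so $L[2\ell-2]v=0$ as soon as $2\ell-2>n$, using positive-energy of the square-bracket grading. A weight count then shows that $L[-2]v\in V_{[n+2]}$ has total degree $n+2$, while each $G_{2\ell}\otimes L[2\ell-2]v$ lies in $M_{2\ell}\otimes V_{[n-2\ell+2]}$, again of total degree $n+2$. Combined with $D(M_\ell)\subseteq M_{\ell+2}$ for the modular derivative, this shows $D$ is homogeneous of degree $2$ on $M\otimes V$, while $E$ evidently acts by the total degree, so the module structure is to be graded.

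Given this, the three relations are short calculations on elementary tensors. For $[D,f]=D(f)$, expanding $D(fg\otimes v)$ via the prescribed formula and using the Leibniz property of $D$ on $M$ causes the $D(v)$ terms to cancel against $f\cdot D(g\otimes v)$, leaving exactly $D(f)(g\otimes v)$. The relation $[E,f]=kf$ for $f\in M_k$ reflects only that multiplication by $f$ raises total degree by $k$. Finally, $[E,D]=2D$ follows from the homogeneity of $D$: if $x\in (M\otimes V)_m$, then $(ED-DE)(x)=(m+2)D(x)-mD(x)=2D(x)$.

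The main obstacle is really the homogeneity check, which relies on the standard shift convention $L[k]\colon V_{[m]}\to V_{[m-k]}$ for the square-bracket Virasoro and the positive-energy hypothesis for the $[\,\cdot\,]$-grading. Once these are in place, the Ore relations are formal: the formula for $D$ on $V$ has been engineered so that $D$ extends the modular derivative on $M$ through the Leibniz rule, and the commutation relations with $E$ are then forced by degree counting alone.
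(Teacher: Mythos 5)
Your proposal is correct and follows the same route as the paper, whose proof is only a one-line sketch: check the defining Ore relations of $S$ on $M\otimes V$, the key point being that $D$ is homogeneous of degree $2$ (your weight count for $L[-2]v$ and the terms $G_{2\ell}\otimes L[2\ell-2]v$, together with the Leibniz cancellation, is exactly the content the paper leaves implicit).
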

\begin{proof}
 One only has to check the relations for $S$ written down in Section~\ref{SSDO}, which is not hard. One of the main points is that $D$ raises weights by $2$.
\end{proof}

At this point we interject a striking identity of operators, which makes use of the strange definition of the action of $D$ in Lemma \ref{lemD}. We will make no further use of this result.
\begin{lem} \label{thmzeta}
 We have
\begin{align*}
 [D, Y[v, z]]&=\sum_{i\geq0} \tfrac{1}{i!}\zeta^{(i)}(z,\tau) Y[L[i-1]v, z]\\
 &= \zeta(z, \tau)Y[L[-1]v, z]-\wp(z, \tau)Y[L[0]v, z]+\cdots.
\end{align*}
\end{lem}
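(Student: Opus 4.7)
The plan is to evaluate $[D, Y[v,z]]$ by unpacking the definition of $D$ from Lemma~\ref{lemD}, applying the standard Virasoro--vertex commutator identity in the square bracket VOA $V[\ ]$, and matching the resulting Laurent series in $z$ against the classical expansion of $\zeta(z,\tau)$.

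First, since $D$ acts on $M\otimes V$ as a derivation with $D(f\otimes v)=Df\otimes v+f\otimes Dv$ while $Y[v,z]$ affects only the $V$-factor, the commutator reduces to a computation on $V$ alone, with the $G_{2\ell}\in M$ pulled out as scalars:
\[
[D,Y[v,z]] = [L[-2],Y[v,z]]-\sum_{\ell\geq 2}G_{2\ell}(\tau)\,[L[2\ell-2],Y[v,z]].
\]

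Second, I would invoke the usual Virasoro--vertex commutator identity, which holds in any VOA and therefore applies verbatim to $V[\ ]$:
\[
[L[n],Y[v,z]] = \sum_{i\geq 0}\binom{n+1}{i}z^{n+1-i}Y[L[i-1]v,z],\qquad n\in\ZZ.
\]
Specializing to $n=-2$ (using $\binom{-1}{i}=(-1)^i$) and to $n=2\ell-2$ for $\ell\geq 2$, one obtains
\[
[D,Y[v,z]] = \sum_{i\geq 0}\Bigl\{(-1)^iz^{-1-i}-\sum_{\ell\geq 2}\binom{2\ell-1}{i}G_{2\ell}(\tau)\,z^{2\ell-1-i}\Bigr\}Y[L[i-1]v,z].
\]

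Third, I would identify the coefficient in braces as $\tfrac{1}{i!}\zeta^{(i)}(z,\tau)$ by termwise differentiation of the classical Laurent expansion
\[
\zeta(z,\tau) = \frac{1}{z}-\sum_{\ell\geq 2}G_{2\ell}(\tau)\,z^{2\ell-1},
\]
using $\tfrac{1}{i!}\tfrac{d^i}{dz^i}z^{-1}=(-1)^iz^{-1-i}$ and $\tfrac{1}{i!}\tfrac{d^i}{dz^i}z^{2\ell-1}=\binom{2\ell-1}{i}z^{2\ell-1-i}$. The second equality in the statement then follows from $\zeta^{(0)}=\zeta$ and $\zeta'=-\wp$. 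The main thing to verify carefully is that the paper's normalization of the $G_{2\ell}$ is precisely the one for which the Laurent coefficients of $\zeta$ are exactly $-G_{2\ell}$ without extra factorial factors; this is essentially the design criterion that motivates both the strange-looking definition of $D$ on $V$ in Lemma~\ref{lemD} and the chosen normalization of the Eisenstein series.
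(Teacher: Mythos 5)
Your proposal is correct and follows essentially the same route as the paper: reduce $[D,Y[v,z]]$ to $[L[-2],Y[v,z]]-\sum_{\ell\geq 2}G_{2\ell}[L[2\ell-2],Y[v,z]]$ via the definition of $D$ in Lemma~\ref{lemD}, apply the Virasoro--vertex commutator formula in $V[\ ]$, and match the resulting coefficients $(-1)^iz^{-1-i}-\sum_{\ell}\binom{2\ell-1}{i}G_{2\ell}z^{2\ell-1-i}$ with $\tfrac{1}{i!}\partial_z^i\zeta(z,\tau)$ via the Laurent expansion of the Weierstrass $\zeta$-function. The only cosmetic difference is that the paper derives the commutator $[L[2p],Y[v,z]]$ mode-by-mode from the general commutator formula rather than quoting it, and your uniform treatment of all $i\geq 0$ is in fact a little cleaner than the paper's case split.
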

\begin{proof}
 We start by computing some commutators:
 \begin{gather*}
[D, v[n]](f\otimes u)=D(f\otimes v[n]u)-v[n](D(f)\otimes u+ f\otimes D(u))\\
\hphantom{[D, v[n]](f\otimes u)}{}
=f\otimes D(v[n]u)-v[n]f\otimes D(u)\\
\hphantom{[D, v[n]](f\otimes u)}{} = f\otimes \bigg\{ L[-2]v[n]-\sum_{\ell\geq 2} G_{2\ell} \otimes L[2\ell-2]v[n]\bigg\}(u) \\
\hphantom{[D, v[n]](f\otimes u)=}{}
- f\otimes v[n]\bigg\{L[-2] -\sum_{\ell\geq 2} G_{2\ell} \otimes L[2\ell-2]\bigg\}(u)\\
\hphantom{[D, v[n]](f\otimes u)}{}
=\bigg\{[L[-2], v[n]]- \sum_{\ell\geq 2}G_{2\ell}\otimes[L[2\ell-2], v[n]]\bigg\}(f\otimes u).
\end{gather*}

 We also have
\begin{gather*}
[L[2p], Y[v, z]]=[\tilde{\omega}(2p+1), Y[v, z]]=\sum_{n\in\ZZ} [\widetilde{\omega}(2p+1), v[n]]z^{-n-1}\\
\hphantom{[L[2p], Y[v, z]]}{}
=\sum_n\sum_{i\geq 0} \binom{2p+1}{i}(\widetilde{\omega}[i]v)[2p+1+n-i]z^{-n-1}\\
\hphantom{[L[2p], Y[v, z]]}{}
=\sum_{i\geq 0}\binom{2p+1}{i} z^{2p-i+1}Y[L[i-1]v, z].
\end{gather*}
Therefore
\begin{gather*}
 [D, Y[v, z]]=\sum_{n\in\ZZ} [D, v[n]]z^{-n-1}\\
\hphantom{[D, Y[v, z]]}{}
 = \sum_n \bigg\{ \bigg\{[L[-2], v[n]]- \sum_{\ell\geq 2}G_{2\ell}\otimes[L[2\ell-2], v[n]]\bigg\} \bigg\}z^{-n-1}\\
\hphantom{[D, Y[v, z]]}{}
=[L[-2], Y[v, z]]-\sum_{\ell} G_{2\ell} [L[2\ell-2], Y[v, z]]\\
\hphantom{[D, Y[v, z]]}{}
=\sum_{i\geq 0} (-1)^iz^{-1-i}Y[L[i-1]v, z]-\sum_{i\geq0}\sum_{\ell} G_{2\ell} \binom{2\ell-1}{i} z^{2\ell-1-i}Y[L[i-1]v, z]\\
\hphantom{[D, Y[v, z]]}{}
=\sum_{i\geq0}Y[L[i-1]v, z] \left\{ (-1)^iz^{-1-i}-\sum_{\ell} G_{2\ell} \binom{2\ell-1}{i} z^{2\ell-1-i} \right\}\\
\hphantom{[D, Y[v, z]]}{}
=\left\{ z^{-1}- \sum_{\ell} G_{2\ell}z^{2\ell-1} \right\}Y[L[-1]v, z]\\
\hphantom{[D, Y[v, z]]=}{}+\sum_{i\geq1} (-1)^iz^{-i}\left\{z^{-1}+(-1)^{i-1}\sum_{\ell} G_{2\ell} \binom{2\ell-1}{i} z^{2\ell-1} \right\}Y[L[i-1]v, z]\\
\hphantom{[D, Y[v, z]]}{}
=\zeta(z, \tau)Y[L[-1]v, z]-\wp(z, \tau)Y[L[0]v, z]\\
\hphantom{[D, Y[v, z]]=}{}+
\sum_{i\geq 2}\bigg\{(-1)^iz^{-1-i}-\sum_{\ell} G_{2\ell} \binom{2\ell-1}{i} z^{2\ell-1-i} \bigg\} Y[L[i-1]v, z]\\
\hphantom{[D, Y[v, z]]}{}
=\zeta(z, \tau)Y[L[-1]v, z]-\wp(z, \tau)Y[L[0]v, z] \\
\hphantom{[D, Y[v, z]]=}{}+\sum_{i\geq 2}\bigg\{(-1)^iz^{-i-1}-(i!)^{-1}\partial_z^i\bigg(\sum_{\ell}G_{2\ell}z^{2\ell-1}\bigg) \bigg\} Y[L[i-1]v, z]\\
\hphantom{[D, Y[v, z]]}{}
=\zeta(z, \tau)Y[L[-1]v, z]-\wp(z, \tau)Y[L[0]v, z] \\
\hphantom{[D, Y[v, z]]=}{}
+\sum_{i\geq 2}\bigg\{\tfrac{1}{i!}\partial_z^i\bigg(z^{-1}-\sum_{\ell}G_{2\ell}z^{2\ell-1}\bigg) \bigg\} Y[L[i-1]v, z]\\
\hphantom{[D, Y[v, z]]}{}
=\zeta(z, \tau)Y[L[-1]v, z]-\wp(z, \tau)Y[L[0]v, z] +\sum_{i\geq 2}\tfrac{1}{i!}\partial_z^i\zeta(z, \tau)Y[L[i-1]v, z].
\end{gather*}
The lemma follows.
\end{proof}

One may compare this result with the analog having $E$ in place of $D$. One easily finds
\[
[E, Y[v, z]]=Y[E(v), z]+zY[L[-1]v, z].
\]

 \subsection{1-point functions and the space of (genus 1) conformal blocks}
 We now fix a strongly regular VOA $V$ of central charge $c$ having inequivalent simple modules $V=M_1, \dots, M_d$ and conformal weights $0, h_2, \dots, h_d$. As before, $V[\ ]$ denotes the (isomorphic) VOA on the cylinder.

 For a homogeneous state $v\in V_k$ we define its \emph{zero mode} to be $o(v)\df v(k-1)$. It is well-known, and readily checked from the VOA axioms, that the zero mode has weight $0$ as an operator on $V$ (hence the name), that is,
 \begin{gather*}
o(v) \colon \ V_n\rightarrow V_n,\qquad n \geq 0.
\end{gather*}
One then extends the zero mode notation linearly, so that for an arbitrary state $v\in V$ its zero mode $o(v)$ is the sum of the zero modes of its homogeneous parts.

This formalism extends to $V$-modules. That is to say, the zero mode preserves weight subspaces of each $M_i$:
\begin{gather*}
o(v) \colon \ (M_i)_{n+h_i}\rightarrow (M_i)_{n+h_i}, \qquad n \geq0, \quad i=1, \dots, d.
\end{gather*}
Here we are employing a common abuse of notation inasmuch as $o(v)$ is actually a mode of the vertex operator $Y_{M_i}(v, z)$ corresponding to $M_i$ and should properly be denoted by $o_i(v)$ or something like that. We will desist from this practice, which will cause no confusion.

 Now we can give the first installment of the definition of the $1$-point function defined by $M_i$. It is the function $f_i$ with domain $V$ defined by
\begin{gather*}
f_i(v) \df \Tr_{M_i} o(v)q^{L(0)-\frac{c}{24}}= q^{h_i-\frac{c}{24}}\sum_{n\geq 0} \Tr_{(M_i)_{n+h_i}} o(v)q^n.
\end{gather*}
Notice that $f_i(\mathbf{1})$ is what we called $f_i$ before. And, in fact, Zhu proves that $f_i(v)$ is a holomorphic function in the upper half-plane for each $v\in V$ along lines similar to those that prevailed for~$f_i(\mathbf{1})$. We usually consider $f_i$ as a linear map
\begin{gather*}
f_i\in \Hom_{\CC}(V, \mathfrak{F})
\end{gather*}
 The final incarnation of these $1$-point functions are their $M$-linear extensions
 \begin{gather*}
f_i \in \Hom_{M}(M \otimes V, \mathfrak{F}).
 \end{gather*}

 The function $f_i$ in this form is often called a \textit{conformal block}, especially in the physics literature. The linear span of the $f_i$ is called the space of (genus $1$) \emph{conformal blocks on the torus}, denoted by $B= B_V$.
 One may define conformal blocks of higher genera, but we shall not pursue this. Accordingly, we refer to $B$ simply as the space of conformal blocks. The version of Zhu's modular-invariance theorem proved in \cite{Zhu} may now be stated as follows:
 \begin{thm}[Zhu] \label{thmZ}
 The following hold:
 \begin{enumerate}\itemsep=0pt
\item[$1.$] $\{f_1, \dots, f_n\}$ are linearly independent in $\Hom_{\CC}(V, \mathfrak{F})$.
\item[$2.$] $B$ is a right $\Gamma$-module with respect to the action defined by
 \begin{gather*}
f| \gamma (v, \tau) \df (c\tau+d)^{-k}f(\gamma\tau),\qquad v \in V_{[k]}, \quad \gamma =\twomat abcd \in\Gamma.
 \end{gather*}
\end{enumerate}
\end{thm}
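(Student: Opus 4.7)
The plan is to address the two claims in turn.

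For (1), I would exploit the leading term of the $q$-expansion of each $f_i(v,\tau)$ together with the module structure of the top level $(M_i)_{h_i}$ over Zhu's algebra $A(V)$. After factoring out the common $q^{-c/24}$, the character $f_i(v,\tau)$ begins with $q^{h_i}\Tr_{(M_i)_{h_i}}o(v)$, so if the $h_i$ are distinct modulo $\ZZ$ any relation $\sum_i\lambda_i f_i=0$ decouples by residue class. Within the subset of indices sharing a common residue, comparison of the coefficient of the minimal power of $q$ yields a linear relation among traces on top levels. By Zhu's theorem the top levels constitute pairwise non-isomorphic simple modules over the finite-dimensional associative algebra $A(V)$, and characters of inequivalent simple modules of such an algebra are linearly independent by the standard Jacobson-density argument. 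This forces every $\lambda_i=0$.

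For (2), the strategy is to embed $B$ inside a finite-dimensional $\Gamma$-stable space $C$ of abstract conformal blocks cut out by recursions intrinsic to $V$. First I would establish Zhu's recursion, expressing $f_i(Y[u,z]v,\tau)$ in terms of the modes $f_i(u[n]v,\tau)$ with coefficients built from $\wp(z,\tau)$, $\zeta(z,\tau)$ and the Eisenstein series $G_{2k}$; Lemma~\ref{thmzeta} is essentially the trace-free shadow of this recursion. Together with the torus identity $f_i(L[-1]v,\tau)=\tfrac{1}{2\pi{\rm i}}\partial_\tau f_i(v,\tau)$, these relations define a subspace $C\subseteq\Hom_\CC(V,\mathfrak{F})$ containing $B$. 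Finite dimensionality of $C$ follows from $C_2$-cofiniteness, which forces any abstract conformal block to be determined by its values on a finite-dimensional quotient of $V$. The modular transformation laws of $\wp$, $\zeta$ and the $G_{2k}$, consistent with assigning weight $k$ to elements of $V_{[k]}$, then imply that the prescribed slash action preserves $C$.

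The principal obstacle will be the equality $B=C$, from which $\Gamma$-stability of $C$ descends to $B$. Proving it requires reconstructing, from an arbitrary $S\in C$, an admissible $V$-module whose $1$-point function recovers $S$. This is precisely where the full force of strong regularity enters: rationality ensures semisimplicity of the admissible module category, and the correspondence between simple $V$-modules and simple $A(V)$-modules lets one identify abstract trace data with genuine module traces. The remaining ingredients---establishing Zhu's recursion, bounding $\dim C$, and verifying $\Gamma$-equivariance of the recursion---are direct, if intricate, manipulations with the transformation laws of Weierstrass and Eisenstein functions.
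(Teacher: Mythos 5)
The paper itself offers no proof of Theorem~\ref{thmZ}: it is quoted as Zhu's theorem, with the argument residing in \cite{Zhu} and \cite[Section~5]{DLMModular} (compare the citation given for Theorem~\ref{thmB}). So the comparison is with Zhu's original proof, and your outline reproduces its architecture faithfully: linear independence via the minimal Fourier coefficients and the fact that the top levels are pairwise inequivalent simple $A(V)$-modules whose traces are linearly independent; and modular invariance by embedding $B$ in a space $C$ of abstract genus-one conformal blocks cut out by Zhu's recursion (the conditions recorded here as Theorem~\ref{thmB}), checking $\Gamma$-stability of those conditions, and finally identifying $B=C$ using rationality. Three refinements are worth recording. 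First, before the slash action even makes sense one needs each $f_i(v,\tau)$ to converge to a holomorphic function on $\uhp$; this is where $C_2$-cofiniteness first enters, producing a modular linear differential equation with regular singularity at $q=0$, and it should precede rather than be folded into the finite-dimensionality of $C$. Second, the step $B=C$ is not carried out by literally reconstructing an admissible module from an abstract block $S$: one shows instead that the $q$-expansion coefficients of $S$ define symmetric linear functionals on Zhu's algebras $A_n(V)$, and semisimplicity of these algebras (rationality) expresses those functionals as combinations of traces over the already-known simple modules $M_1,\dots,M_d$, whence $S$ lies in the span of the $f_i$. Third, the $\Gamma$-stability of the defining conditions needs care with the quasimodular $G_2$, which is absorbed into the modular derivative exactly as in condition~\eqref{13}; the conditions involving $G_{2\ell}$ for $\ell\geq 2$ transform cleanly. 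With these caveats your plan is the standard one, and the genuinely hard analytic and algebraic work is deferred to exactly the places you flag.
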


 There is a fundamental characterization of the functions in $B$, as follows.
 \begin{thm} \label{thmB}
 Suppose that $f \in \Hom_{M}(M \otimes V, \mathfrak{F})$. Then $f\in B$ if, and only if, the following conditions hold for all $k$, all $u \in V$, and all $v \in V_{[k]}$:
\begin{gather}
f(u[0]v)=0,\nonumber\\
f\bigg(u[-2]v+\sum_{\ell\geq2} (2\ell-1)G_{2\ell}{\otimes} u[2\ell-2]v \bigg)=0,\nonumber\\
f\circ D(v)=D_k\circ f(v). \label{13}
\end{gather}
\end{thm}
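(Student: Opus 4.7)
The plan is to prove the two directions of the equivalence separately: necessity is essentially Zhu's trace identities for the functions $f_i$, while sufficiency exploits $C_2$-cofiniteness together with a Frobenius-type argument to force equality with $B$.

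For necessity, I would fix $f = f_i$ and verify each identity in turn. Condition 1 follows from the standard VOA identity $o(u[0]v) = [o(u), o(v)]$ applied inside $V[\ ]$ acting on $M_i$, combined with cyclicity of the trace, which is legitimate because zero modes preserve weight subspaces and hence commute with $q^{L(0) - c/24}$. Condition 2 is a repackaging of Zhu's genus-one recursion: starting from $\Tr_{M_i}\,o(u)o(v)q^{L(0)-c/24}$ and commuting $o(u)$ past $o(v)$, the ``leftover'' mode terms carry factors $\sum_{n\geq 1}n^{2\ell-1}q^n$ which assemble into the Eisenstein series $G_{2\ell}$; after reorganising via $L[-1]$ one lands on the stated vanishing. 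Condition 3 is Zhu's $L[-2]$ identity: differentiating $f_i(v,\tau)$ in $\tau$ yields $f_i(L[-2]v) + \sum_{\ell\geq 2} G_{2\ell}\,f_i(L[2\ell-2]v)$ plus a correction of the form $\tfrac{k}{12}E_2\,f_i(v)$, and this correction is absorbed into the definition of $D_k$. The $D$-action on $M\otimes V$ from Lemma~\ref{lemD} is rigged exactly to make this match.

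For sufficiency, I would show that the space $B' \subseteq \Hom_M(M\otimes V,\mathfrak{F})$ cut out by conditions 1--3 has dimension at most $d = \rk V$; combined with $B \subseteq B'$ from necessity and the linear independence of Theorem~\ref{thmZ}(1), this forces $B = B'$. The argument splits naturally. First, conditions 1 and 2 together force $f$ to factor through $M \otimes (V/N)$ for a certain $M$-submodule $N$ of $M\otimes V$; by $C_2$-cofiniteness, $V/N$ is finite dimensional, and indeed is closely related to a square-bracket incarnation of Zhu's algebra $A(V)$. Second, condition 3 is a Fuchsian linear ODE in $\tau$ on the resulting finite-dimensional space, whose allowed Frobenius exponents at the cusp are exactly $h_i - c/24$ for $i=1,\dots, d$, and these uniquely determine the solutions matching the $f_i$.

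The main obstacle is identifying the quotient $V/N$ cleanly enough to conclude $\dim B' \leq d$: this is where the finiteness theorems for Zhu's algebra, the Dong--Li--Mason bijection between simple $V$-modules and simple $A(V)$-modules, and $C_2$-cofiniteness must all be invoked simultaneously. Translating these classical statements into the square-bracket formulation, where one must track the interplay between the two gradings in \eqref{dirsums}, is where essentially all of the technical bookkeeping lives; the forward direction, by contrast, reduces to a direct if laborious computation once the trace cyclicity and $L[-2]$ identities are set up.
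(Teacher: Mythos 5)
The paper offers no proof of Theorem~\ref{thmB}: its ``proof'' is a citation to Zhu and to Section~5 of Dong--Li--Mason, and your outline is essentially a compressed road map of that cited argument rather than an alternative to anything in the paper. Your necessity direction is correct in outline and matches the sources: condition~1 is $o(u[0]v)=[o(u),o(v)]$ plus cyclicity of the trace, condition~2 is Zhu's genus-one recursion with the Eisenstein series collecting the mode sums, and condition~3 is the $L[-2]$-identity with the weight-$\tfrac{k}{12}E_2$ (i.e., $kG_2$) correction absorbed into $D_k$, exactly as the action of $D$ in Lemma~\ref{lemD} is designed to encode.

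The genuine gap is the sufficiency direction, which is where all of the content of the theorem lives and which you explicitly leave as an ``obstacle'' rather than resolve. Two concrete problems with the sketch as written. First, conditions~1--2 do not make a quotient of $V$ finite dimensional in the way you use it: what $C_2$-cofiniteness buys is that $M\otimes V$ modulo the condition-2 relations is a \emph{finitely generated $M$-module} (compare Theorem~\ref{rkthm}), and it is this, combined with condition~3, that produces the Fuchsian MLDE; the finite-dimensional object relevant to counting solutions is Zhu's algebra $A(V)=V/O(V)$, a different quotient, which enters only through the leading coefficients of a Frobenius expansion (the condition-2 relations reduce to $O(V)$-type relations upon taking constant terms of the $G_{2\ell}$). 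Conflating these two quotients hides the actual mechanism. Second, the claim that the indicial exponents at the cusp are ``exactly $h_i-\tfrac{c}{24}$'' and ``uniquely determine'' the solutions is false as stated: a priori a solution of the MLDE satisfying conditions~1--3 could have other exponents or logarithmic terms, and the heart of the Zhu/DLM proof is to show that the leading coefficient of any such solution defines a symmetric linear functional on $A(V)$, hence a combination of traces over simple $A(V)$-modules (using the Dong--Li--Mason bijection with simple $V$-modules), and then to run an induction on the $q$-expansion to identify the solution with a linear combination of the $f_i$ and to exclude spurious exponents and logarithms. Without these steps the bound $\dim B'\le d$ is simply asserted, so your proposal is an accurate description of where the cited proof lives, but not a proof.
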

\begin{proof}
See \cite{Zhu} or \cite[Section~5]{DLMModular}.
\end{proof}

 \subsection[The morphism F]{The morphism $\boldsymbol{F}$}

 Our goal in this subsection is to state and prove a more comprehensive version of Theorem~\ref{thmZ}. We use the following additional notation:
 \[\rho\colon \ \Gamma \rightarrow \GL_d(\CC)\]
 is the matrix representation furnished by the conformal block $B_V$ with respect to the basis $(f_1, \dots, f_d)$, and $L$ is the canonical choice of exponents adapted to $V$, namely
\[L\df \diag\big({-}\tfrac{c}{24}, h_2-\tfrac{c}{24}, \dots, h_d-\tfrac{c}{24}\big).\]
The second part of Theorem~\ref{thmZ} may then be restated as follows:
\begin{lem} \label{lem4}
 Let $v \in V_{[k]}$ and set
 \[F(v)\df \left(\begin{matrix}f_1(v) \\ \vdots \\ f_d(v) \end{matrix}\right).\]
Then $F(v) \in M_k(\rho, L)$.
\end{lem}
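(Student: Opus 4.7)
The claim requires two verifications: (a) that $F(v)$ satisfies the weight-$k$ transformation law for $\rho$, and (b) that $F(v)$ admits a $q$-expansion of the form $q^L g(q)$ with $g(q) \in \pseries{\CC^d}{q}$. Both fall out of results already in hand.

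For (a), the plan is to read Theorem~\ref{thmZ}(2) in matrix form. Since $v \in V_{[k]}$, that theorem gives $(f_i|\gamma)(v,\tau) = (c\tau+d)^{-k} f_i(v, \gamma\tau)$ for each $i$ and each $\gamma = \stwomat abcd \in \Gamma$. Because $\rho$ is defined as the matrix of the right $\Gamma$-action on $B_V$ relative to the ordered basis $(f_1, \ldots, f_d)$, we have $f_i|\gamma = \sum_j \rho(\gamma)_{ij} f_j$ in $B_V$; the orientation of indices is pinned down by the weight-zero identity $\rho(\gamma) F_V(\tau) = F_V(\gamma\tau)$ already used in Section~\ref{SSZhu}. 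Evaluating at $(v,\tau)$ and solving for $f_i(v, \gamma\tau)$ gives the transformation law componentwise, and assembling into a column yields $F(v)(\gamma\tau) = (c\tau+d)^k \rho(\gamma) F(v)(\tau)$.

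For (b), the plan is to inspect the explicit trace formula for $f_i(v)$. Using that $o(v)$ preserves each graded piece of $M_i$, we have
\[f_i(v,\tau) = q^{h_i - c/24} \sum_{n \geq 0} \Tr_{(M_i)_{n+h_i}} o(v)\, q^n,\]
visibly $q^{h_i - c/24}$ times an ordinary power series in $q$. Since $L = \diag\bigl(-c/24,\, h_2 - c/24,\, \ldots,\, h_d - c/24\bigr)$, this is exactly an expansion of the shape $F(v) = q^L g(q)$ with $g \in \pseries{\CC^d}{q}$, so $F(v) \in M_k(\rho, L)$ provided $\rho(T) = e^{2\pi{\rm i} L}$. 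The latter is automatic: specializing (a) to $\gamma = T$ forces $f_i(v,\tau+1) = \sum_j \rho(T)_{ij} f_j(v,\tau)$, while the $q$-expansion dictates $f_i(v,\tau+1) = e^{2\pi{\rm i}(h_i - c/24)} f_i(v,\tau)$, so $\rho(T)$ is diagonal in our basis with eigenvalues $e^{2\pi{\rm i}(h_i - c/24)}$. Rationality of the $h_i$ from~\cite{DMRational} guarantees that $L$ is a bona fide logarithm of $\rho(T)$.

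There is no serious obstacle: the lemma is essentially a bookkeeping exercise that recasts Zhu's modular-invariance theorem together with the defining trace formula \eqref{fjdef} into vector-valued modular-form language. The one subtle point is maintaining a consistent index convention for how $\rho$ acts on $B_V$, since transposing would force $\trans{\rho}$ or its contragredient in place of $\rho$; the identification fixed in Section~\ref{SSZhu} removes the ambiguity.
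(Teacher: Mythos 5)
Your argument is correct and matches the paper's approach: the paper states Lemma~\ref{lem4} without proof, presenting it as a direct restatement of Theorem~\ref{thmZ}(2), and your write-up simply supplies the bookkeeping (the weight-$k$ transformation law read off from the $\Gamma$-action on $B_V$ in the basis $(f_1,\dots,f_d)$, the $q$-expansion shape from the trace formula, and the compatibility $\rho(T)={\rm e}^{2\pi{\rm i}L}$). The only cosmetic quibble is that rationality of the $h_i$ is irrelevant to $L$ being a logarithm of $\rho(T)$ (any complex exponents would serve); what actually pins down $\rho(T)$ as the diagonal matrix ${\rm e}^{2\pi{\rm i}L}$ in your last step is the linear independence of the $f_i$ from Theorem~\ref{thmZ}(1).
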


 There is a useful (though equivalent) variant of Lemma \ref{lem4} as follows:
 \begin{lem}\label{lemeval}
 Let the notation be as in Lemma~{\rm \ref{lem4}}, and let $L_v$ be the linear span of the $1$-point values $f_i(v)$. Then $L_v$ is a right $\Gamma$-module and evaluation at $v$ is a surjective morphism of $\Gamma$-modules
 \begin{gather*}
 \ev_v \colon \ B\longrightarrow L_v,\qquad f_i \mapsto f_i(v).
\end{gather*}
In particular, evaluation at the vacuum $\mathbf{1}$ defines a surjection
\begin{gather*}
 \ev_{\mathbf{1}}\colon \ B\longrightarrow \ch_V,\qquad f_i \mapsto f_i(\mathbf{1}).
\end{gather*}
 \end{lem}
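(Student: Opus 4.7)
The lemma is an essentially formal consequence of part~(2) of Theorem~\ref{thmZ}. Fix $v \in V_{[k]}$. By the very definition $L_v = \langle f_1(v), \dots, f_d(v)\rangle \subseteq \mathfrak{F}$, the evaluation map $\ev_v \colon B \to L_v$, extended linearly from $f_i \mapsto f_i(v)$, is surjective by construction, so the substance of the lemma lies entirely in exhibiting a right $\Gamma$-action on $L_v$ with respect to which $\ev_v$ is equivariant.

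The natural candidate is the weight-$k$ slash action $(g|_k\gamma)(\tau) \df (c\tau+d)^{-k} g(\gamma\tau)$ on holomorphic functions on $\uhp$. Unwinding the definition of the right $\Gamma$-action on $B$ recorded in Theorem~\ref{thmZ}(2), one reads off that for every $f \in B$ and every $\gamma = \stwomat{a}{b}{c}{d} \in \Gamma$,
\[
\ev_v(f|\gamma)(\tau) \;=\; (f|\gamma)(v,\tau) \;=\; (c\tau+d)^{-k} f(v,\gamma\tau) \;=\; \bigl(\ev_v(f)\bigr)|_k\gamma \,(\tau),
\]
so $\ev_v$ intertwines the two actions. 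Expanding $f_i|\gamma = \sum_j \rho(\gamma)_{ji} f_j$ in $B$ and evaluating at $v$ gives $f_i(v)|_k\gamma = \sum_j \rho(\gamma)_{ji} f_j(v) \in L_v$, which shows that $L_v$ is a $\Gamma$-stable subspace of $(\mathfrak{F},|_k)$. Therefore $L_v$ inherits the structure of a right $\Gamma$-module and $\ev_v$ is a surjective morphism of $\Gamma$-modules.

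The special case $v = \mathbf{1}$ follows immediately: because $V$ is of CFT-type one has $\mathbf{1} \in V_{[0]}$, so $k=0$ and the weight-$0$ slash action collapses to $g|_0\gamma(\tau) = g(\gamma\tau)$, the action used in Section~\ref{SSZhu}; by definition $L_{\mathbf{1}} = \ch_V$, which gives the stated surjection $\ev_{\mathbf{1}} \colon B \twoheadrightarrow \ch_V$. There is essentially no obstacle here, since all of the real work has been front-loaded into Theorem~\ref{thmZ}; the only thing worth tracking is that the weight of the slash action on $L_v$ is dictated by the square-bracket grading $v \in V_{[k]}$ rather than by the original grading.
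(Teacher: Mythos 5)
Your proposal is correct and follows exactly the route the paper intends: Lemma~\ref{lemeval} is stated there as an ``equivalent variant'' of Lemma~\ref{lem4} with no written proof, and your argument simply carries out the routine verification from Theorem~\ref{thmZ}(2) — equivariance of $\ev_v$ for the weight-$k$ slash action determined by $v\in V_{[k]}$, $\Gamma$-stability of $L_v$, surjectivity by definition of $L_v$, and the specialization $v=\mathbf{1}\in V_{[0]}$ recovering the weight-$0$ action on $\ch_V$. Nothing is missing.
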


 \begin{ex}
 If $V=V_\Lambda$ is a lattice theory for a positive-definite even lattice $\Lambda$, and if we take $v\df {\rm e}^{\alpha}$ for some nonzero vector $\alpha\in \Lambda$, then the corresponding evaluation map is \emph{trivial}, that is, its image is $0$.
 \end{ex}

Now we have
\begin{thm}
 \label{thmgradedZ}
 The $M$-linear extension of $F$ defines a morphism of $\ZZ$-graded $S$-modules
\begin{gather*}
F\colon \ M\otimes V \longrightarrow \bigoplus_{k\geq 0} M_k(\rho, L).
\end{gather*}
\end{thm}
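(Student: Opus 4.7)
The plan is to reduce the statement to three separate compatibilities corresponding to the generators of the Ore algebra $S=M\langle D,E\rangle$: multiplication by elements of $M$, the modular derivative $D$, and the Euler operator $E$. Once each is checked, the conclusion that $F$ is a morphism of $S$-modules follows automatically, since $S$ is generated as an $M$-algebra by $D$ and $E$.

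The $M$-linearity is tautological: by construction $F$ is defined on $M\otimes V$ as the $M$-linear extension of the map $v\mapsto(f_1(v),\ldots,f_d(v))$ appearing in Lemma~\ref{lem4}, so $F(g(f\otimes v))=gfF(v)=gF(f\otimes v)$ for $f,g\in M$ and $v\in V$. The compatibility with $E$ amounts to showing that $F$ is graded of degree zero. For $f\in M_\ell$ and $v\in V_{[k]}$, Lemma~\ref{lem4} gives $F(v)\in M_k(\rho,L)$, hence $F(f\otimes v)=fF(v)\in M_{\ell+k}(\rho,L)$. Comparing the two definitions of $E$ (acting by $\ell+k$ on $(M\otimes V)_{\ell+k}$, and by the weight on $M_k(\rho,L)$) yields $F\circ E=E\circ F$.

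The heart of the argument is compatibility with $D$. The definitions of $D$ on $M\otimes V$ (Lemma~\ref{lemD}) and on $M(\rho,L)$ are both Leibniz rules, namely
\[
D(f\otimes v)=D(f)\otimes v+f\otimes D(v),\qquad D_{\ell+k}(fg)=D_\ell(f)g+fD_k(g),
\]
where the second identity holds for any $f\in M_\ell$ and any weight-$k$ vector-valued form $g$, and is an elementary consequence of the definition $D_k=q\tfrac{d}{dq}-\tfrac{k}{12}E_2$. Applying $F$ to the first identity and using $M$-linearity together with the grading established above, the equation $F\circ D=D\circ F$ on a pure tensor $f\otimes v$ with $v\in V_{[k]}$ reduces to the single identity
\[
F(D(v))=D_k(F(v)).
\]
Componentwise this reads $f_i(D(v))=D_k(f_i(v))$ for each $i=1,\ldots,d$ and each $v\in V_{[k]}$, which is precisely the third relation~\eqref{13} of Theorem~\ref{thmB} applied to the conformal block $f_i\in B_V$. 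Stacking these $d$ identities into a column vector gives the desired equality.

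The main obstacle is thus already absorbed into Theorem~\ref{thmB}: the genuinely nontrivial input is the conformal-block identity encoding how the action of the modular derivative on the 1-point functions $f_i$ matches, term by term, the combination of $L[-2]$ and Eisenstein coefficients $G_{2\ell}$ used to define $D$ on $M\otimes V$ in Lemma~\ref{lemD}. Once that theorem and Lemma~\ref{lem4} are in hand, the present proof is a matter of assembling $M$-linearity, the graded Leibniz rule for $D$ on $M(\rho,L)$, and componentwise application of~\eqref{13}.
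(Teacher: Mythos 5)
Your proposal is correct and follows essentially the same route as the paper: $M$-linearity is definitional, $E$-compatibility is the gradedness supplied by Lemma~\ref{lem4}, and the $D$-compatibility is reduced via the Leibniz rules on both sides to the componentwise identity $f_i(D(v))=D_k(f_i(v))$, which is exactly relation~\eqref{13} of Theorem~\ref{thmB}. No gaps to report.
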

\begin{proof}
We first consider the $S$-module structures that are involved. We have already introduced the action of $S$ on $M \otimes V$ in Section~\ref{SSMV}. The action of $S$ on $M(\rho, L)$ is discussed in Section~\ref{s:vvmf}: $M$~acts on vector-valued modular forms by pointwise multiplication, and since $M(\rho, L)$ is $\ZZ$-graded we take $E$ to act as its Euler operator. Finally, $D$ acts on $M_k(\rho, L)$ in a componentwise fashion as the $k^{\textrm{th}}$ modular derivative~$D_k$,

 Now we show that $F$ is $S$-linear. Its very definition shows that it is $M$-linear, and by Lemma~\ref{lem4}, $F$~is a graded map, so it commutes with $E$ which acts everywhere as the Euler operator. That $F$ commutes with the action of $D$ is less obvious. Let's see what is involved. For $g \in M_t$, $v\in V_{[k]}$, consider
 \begin{gather*}
(F\circ D)(g \otimes v) =F(D_t(g) \otimes v + gD(v))=D_t(g)\left(\begin{matrix}f_1(v) \\ \vdots \\ f_d(v) \end{matrix}\right) +
g\left(\begin{matrix}f_1(D(v)) \\ \vdots \\ f_d(D(v)) \end{matrix}\right),
\end{gather*}
whereas
\begin{gather*}
(D\circ F)(g \otimes v)=D_{t+k}\left(g\left(\begin{matrix}f_1(v) \\ \vdots \\ f_d(v) \end{matrix}\right)\right)=
D_{t}(g)\left(\begin{matrix}f_1(v) \\ \vdots \\ f_d(v) \end{matrix}\right) +
gD_{k}\left(\begin{matrix}f_1(v) \\ \vdots \\ f_d(v) \end{matrix}\right),
\end{gather*}
so it suffices to prove that
\begin{gather*}
f_i(D(v)) = D_k(f_i(v)).
\end{gather*}
But because $f_i\in B$ this follows from the relation (\ref{13}) of Theorem~\ref{thmB}. This completes our discussion of Theorem~\ref{thmgradedZ}.
\end{proof}

\subsection{Examples}\label{SSEx}
We give here some illustrations of the previous results.

\begin{ex}
 The Virasoro element of $V[\ ]_2$ is a canonical element $\tilde{\omega} \in V_{[2]}$. According to Lemma \ref{lem4} we have $F(\tilde{\omega}) \in M_2(\rho, L)$. We will show that
\[
F(\tilde{\omega}) = D_0F_V.
\]
Indeed, $o(\tilde{\omega})= L(0)-\tfrac{c}{24}\mathbf{1}$ and we compute
\[
f_i(\tilde{\omega}) = \Tr_{M_i}\big(L(0)-\tfrac{c}{24}\Id\big)q^{L(0)-\frac{c}{24}} = D_0(f_i).
\]
The result follows.
\end{ex}

\begin{ex} Suppose that $V$ is an extremal VOA with just two simple modules and with $\ell=0$, as discussed in Section~\ref{s:rank2} below. We already observed that the minimal weight is at least zero, and since $F_V$ is of weight zero, this means the minimal weight $k_1$ of $M(\rho, L)$ is $0$ and that $F_V$ spans $M_0(\rho, L)$. Moreover, by Theorem~\ref{t:rank2}, $M(\rho, L)$ is generated by $F_V$ and $D_0F_V$ as $M$-module. By Theorem~\ref{thmgradedZ} we can deduce in this case that $F$ is a \emph{surjection}
\[
F\colon \ M \otimes V \twoheadrightarrow M(\rho, L)
\]
and especially, it follows that every holomorphic weight $k$ vector-valued modular form (vvmf) in $M(\rho, L)$ arises as $F(u)$ for some $u \in (M\otimes V[\ ])_{k}$.
\end{ex}

The conclusions of the previous example certainly do not hold in general. Analysis of the sequence
\[
0\rightarrow \ker F\rightarrow M \otimes V \rightarrow \im F\hookrightarrow \oplus_{k\geq 0} M_k(\rho, L)
\]
goes to the heart of the proof of Zhu's theorem.

Recall that $V$ is $C_2$-cofinite, meaning that the subspace $C_2(V) \subseteq V$ spanned by all states $u(-2)v$ for $u, v\in V$ has \emph{finite codimension}. We shall prove
\begin{thm}
 \label{rkthm}
 As an $M$-module, $\im F$ is generated by no more than $\codim C_2(V)$ elements.
\end{thm}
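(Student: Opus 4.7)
The strategy is to use the second vanishing relation of Theorem~\ref{thmB} to reduce each generator $F(v)$, $v\in V$, to an $M$-linear combination of $F$-values on finitely many fixed representatives of a basis of $V[\ ]/C_2(V[\ ])$. Applying that relation to each component $f_i$ of $F$ and invoking $M$-linearity yields the key identity
\begin{gather*}
F(u[-2]v) \;=\; -\sum_{\ell\geq 2}(2\ell-1)\,G_{2\ell}\,F(u[2\ell-2]v) \qquad \text{in } M(\rho,L),
\end{gather*}
valid for all $u,v\in V$. Since the mode $u[n]$ shifts the square-bracket weight by $\wt[u]-n-1$, the element $u[2\ell-2]v$ has square-bracket weight $2\ell$ less than that of $u[-2]v$, hence strictly less for every $\ell\geq 2$.

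The VOA isomorphism $V \cong V[\ ]$ identifies $C_2(V)$ with $C_2(V[\ ])$, so $\codim C_2(V)=\codim C_2(V[\ ])=:r$. Since $C_2(V[\ ])$ is a graded subspace of $V[\ ]$, one can choose homogeneous representatives $v_1,\ldots,v_r\in V$ of a basis of $V[\ ]/C_2(V[\ ])$. I would then prove by induction on the $L[0]$-weight $n$ that for every $v\in V[\ ]_{[n]}$, the value $F(v)$ lies in the $M$-submodule $N$ of $\bigoplus_k M_k(\rho,L)$ generated by $F(v_1),\ldots,F(v_r)$. Writing a homogeneous $v$ as $v=\sum c_i v_i+w$ with $w\in C_2(V[\ ])_{[n]}$, and decomposing $w=\sum_j u_j[-2]x_j$, the key identity expresses each $F(u_j[-2]x_j)$ as an $M$-linear combination of $F$-values in weight strictly less than $n$, which lie in $N$ by the inductive hypothesis; hence $F(v)\in N$. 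The induction terminates because the square-bracket grading is bounded below by the CFT-type assumption (transferred from $V$ to $V[\ ]$ via the VOA isomorphism), and the base case is immediate since at the lowest weights the chosen representatives already span.

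Because $M\otimes V$ is generated as an $M$-module by $\{1\otimes v:v\in V\}$ and $F$ is $M$-linear, the preceding step shows that $\im F$ is generated as an $M$-module by the finite set $\{F(v_1),\ldots,F(v_r)\}$, establishing the asserted bound. The main delicate point is keeping the induction compatible with the $L[0]$-grading~-- using graded-ness of $C_2(V[\ ])$ and boundedness below of the square-bracket weight~-- and verifying the equality $\codim C_2(V)=\codim C_2(V[\ ])$ via a VOA isomorphism. Relation~(1) of Theorem~\ref{thmB} (vanishing on $u[0]v$) plays no role in this upper bound, though it would be needed for any finer analysis of $\ker F$.
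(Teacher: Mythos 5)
Your proposal is correct and follows essentially the same route as the paper: pass to $V[\ ]$, pick a graded complement of $C_2[V]$ (your representatives $v_1,\dots,v_r$ play the role of the paper's subspace $W$), and induct on the square-bracket weight using the relation $F(u[-2]v)=-\sum_{\ell\geq 2}(2\ell-1)G_{2\ell}F(u[2\ell-2]v)$ coming from Theorem~\ref{thmB}. The only cosmetic difference is that the paper phrases the induction as the containment $M\otimes V=M\otimes W+\ker F$ rather than directly as generation of $\im F$ by the $F(v_i)$.
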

\begin{proof}
 Because $V$ and $V[\ ]$ are isomorphic VOAs, we may, and shall, work in the latter VOA and its subspace $C_2[V]$ spanned by all $u[-2]v$ for $u$, $v\in V$. First note that all states in $C_2[V]$ have square bracket weight at least $1$. Therefore, if we decompose $V$ into square bracket graded subspaces
 \begin{gather} \label{sqdecomp}
V=W \oplus C_2[V]
\end{gather}
then we necessarily have $\mathbf{1}\in W$. We will prove that
\[
M\otimes V = M \otimes W +\ker F.
\]
This immediately implies the statement of the theorem.

We proceed by establishing, by induction on $k$, that each $v \in V_{[k]}$ belongs to $M \otimes W + \ker F$. Because $\mathbf{1}\in W$ the result is obvious for $k = 0$ and this begins the induction. Thanks to equation~\eqref{sqdecomp} we can write
\begin{gather} \label{vform}
v=w+\sum_j a_j[-2]b_j,\qquad w\in W,\quad a_j, b_j \in V,
\end{gather}
with $\wt[a_j]+\wt[b_j]+1=k$ for each $j$.

The gist of the argument is the following containment, cf.~\cite{DLMModular} and Theorem~\ref{thmB}:
\begin{gather} \label{u2vform}
u[-2]v+\sum_{\ell\geq2} (2\ell-1)G_{2\ell}\otimes u[2\ell-2]v\in \ker F, \qquad u, v\in V.
\end{gather}
We apply this formula to $a_j[-2]b_j$, where each term $a_j[2\ell-2]b_j$ (for $\ell \geq 2)$ has square bracket weight $k-2\ell$. By induction each of these terms lies in $M \otimes W +\ker F$, therefore by equation~\eqref{u2vform}, so does $a_j[-2]b_j$ for each $j$. A final application of equation~\eqref{vform} now delivers the desired containment $v\in M\otimes W+\ker F$ and the theorem is proved.
\end{proof}

\begin{rmk}The reader can compare this result with \cite[Lemma~4.4.1]{Zhu}, though note that Zhu's result depends on some technical hypotheses on~$V$, some of which were eliminated in~\cite{DLMTwisted}.
\end{rmk}

\begin{rmk}We may supplement Theorem~\ref{rkthm} with the observation that all odd homogeneous subspaces $M \otimes V_{[2k+1]}$ are contained in $\ker F$. Indeed, this follows because~$F$ is a graded map into the $2\ZZ$-graded space~$M(\rho, L)$.
\end{rmk}

\subsection{Functorial properties}
It is awkward to make Zhu theory fully functorial. The reason is that many of the most important results depend on special properties of the representation $\rho$. Dependence on this particular matrix representation can ruin functoriality. In this subsection we salvage a few crumbs of functoriality.

Let $\bV$ be the category of strongly regular VOAs. A morphism $U\stackrel{f}{\longrightarrow}V$ in this category preserves vacuum vectors and conformal vectors, and satisfies $f\circ Y(u, z) = Y(f(u), z)\circ f$, that is, $f(u(n)v) = f(u)(n)f(v)$. On account of the fact that strongly regular VOAs are simple it follows that all morphisms in $\bV$ are \emph{injective}.

\begin{lem} \label{lemBres}
 The assignment $V\longrightarrow B_V$ defines a contravariant functor
\[
B\colon \ \bV\longrightarrow \Gamma-\Mod.
\]
\end{lem}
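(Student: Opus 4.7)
The plan is to define the functor on morphisms by pullback of 1-point functions, and then to verify functoriality, $\Gamma$-equivariance, and membership in the target conformal block space using the characterization of Theorem~\ref{thmB}.

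Given a morphism $f\colon U \to V$ in $\bV$, and $\phi \in B_V \subseteq \Hom_{\CC}(V,\mathfrak{F})$, I would define $B(f)(\phi) \df \phi \circ f \in \Hom_{\CC}(U,\mathfrak{F})$, extended $M$-linearly to $M\otimes U$. Functoriality of the assignment on morphisms is then automatic from the associativity of composition: $B(\id_V) = \id_{B_V}$ and $B(g\circ f) = B(f)\circ B(g)$. The key points to check are therefore that $\phi\circ f$ actually lies in $B_U$, and that $B(f)$ is a morphism of $\Gamma$-modules.

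First I would verify that a VOA morphism $f\colon U\to V$ is automatically a morphism of the associated cylinder VOAs $U[\,]\to V[\,]$. This is forced by the formula $Y[u,z] = Y\bigl({\rm e}^{zL(0)}u,{\rm e}^z-1\bigr)$ together with the preservation of the Virasoro element, which implies $f(L[n]u) = L[n]f(u)$ and $f(u[n]v) = f(u)[n]f(v)$. In particular, $f$ preserves the square-bracket weight decomposition, so $f(U_{[k]})\subseteq V_{[k]}$, and it commutes with the operator $D$ of Lemma~\ref{lemD} acting on $M\otimes U$ and $M\otimes V$, since $D$ is built out of $L[-2]$ and the operators $L[2\ell-2]$. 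Using these compatibilities, each of the three conditions characterizing $B_V$ in Theorem~\ref{thmB} pulls back to the corresponding condition for $\phi\circ f$ in $B_U$: the vanishing $\phi(f(u[0]v)) = \phi(f(u)[0]f(v)) = 0$ handles the first, an analogous identity handles the second using $f(u[2\ell-2]v) = f(u)[2\ell-2]f(v)$, and the third reduces to $\phi\circ f\circ D = \phi\circ D\circ f = D_k\circ(\phi\circ f)$.

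For $\Gamma$-equivariance, the action on a homogeneous $v\in U_{[k]}$ is by $(\phi|\gamma)(v,\tau) = (c\tau+d)^{-k}\phi(v,\gamma\tau)$, and because $f$ preserves square-bracket weights the weight $k$ is unchanged when we replace $v$ by $f(v)$. Thus $B(f)(\phi|\gamma)(u,\tau) = (c\tau+d)^{-k}\phi(f(u),\gamma\tau) = \bigl(B(f)(\phi)\bigr)|\gamma\,(u,\tau)$, giving $B(f)\circ(\cdot|\gamma) = (\cdot|\gamma)\circ B(f)$, i.e., $B(f)$ is $\Gamma$-equivariant.

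The main obstacle, such as it is, is purely bookkeeping: one must ensure that $f$ respects the square-bracket structure in the precise form required by Theorem~\ref{thmB}, including the modular derivative $D$ on $M\otimes V$. No special properties of the matrix representation $\rho$ enter, and injectivity of $f$ is not used; this is consistent with the remark opening the subsection that functoriality can be obtained only when one ignores structure that depends on the specific module labelling.
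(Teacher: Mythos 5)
Your proposal is correct, but it verifies the key step by a genuinely different route from the paper. Both arguments define $B(f)$ in the same way (pullback/restriction of a conformal block along $f$, which is legitimate since $U\cong f(U)$), and the $\Gamma$-equivariance computation is essentially identical — both hinge on the observation that $f$ preserves square-bracket weights, so the automorphy factor $(c\tau+d)^{-k}$ is unchanged. Where you diverge is in showing that $\phi\circ f$ actually lands in $B_U$: you check the three Zhu--Dong--Li--Mason conditions of Theorem~\ref{thmB} directly, after noting that a morphism in $\bV$ intertwines the square-bracket structures (hence all modes $u[n]$, the operators $L[n]$, and the operator $D$ of Lemma~\ref{lemD}); this uses the ``if'' direction of Theorem~\ref{thmB} for $U$ and the ``only if'' direction for $V$. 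The paper argues representation-theoretically instead: since $f(U)$ is strongly regular, each simple $V$-module decomposes into simple $f(U)$-modules, so the restriction of a linear combination of $V$-trace functions is again a linear combination of $U$-trace functions, hence lies in $B_U$ by definition. Your approach buys a more formal, axiomatic argument that avoids the semisimple decomposition of $V$-modules over the subalgebra $f(U)$, at the cost of the bookkeeping you flag (checking that $f$ commutes with the square-bracket structure, which also requires $c_U=c_V$, immediate from $f(\omega_U)=\omega_V$ and injectivity); the paper's approach is more concrete, exhibiting $f^*(b)$ explicitly as a combination of $U$-trace functions, which is extra information your argument does not give. One small caveat: your closing gloss on the paper's remark about functoriality is slightly off — the remark is that full functoriality is awkward because key results depend on the specific matrix representation $\rho$, and the paper does use injectivity of morphisms in $\bV$ (a consequence of simplicity) to identify $U$ with the sub-VOA $f(U)$ — but this does not affect the validity of your proof.
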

\begin{proof}
 Let $U\!\stackrel{f}{\longrightarrow}\!V$ be a morphism of strongly regular VOAs. Let $b\in B_V$. Then \mbox{$U \cong f(U) \subseteq V$} and we assert that the \emph{restriction} of $b$ to $f(U)$ belongs to $B_U$. Indeed, $b\df \sum_j c_j f_j$ is a~linear combination of the trace functions $f_j$ furnished by the simple $V$-modules $M_j$. Since $f(U)$ is strongly regular, each $M_j$ splits into a direct sum of simple $f(U)$-modules, and therefore $b$ is itself a linear combination of trace functions for the simple $U$-modules. In this way restriction to $f(U)$ defines a map
\begin{gather*}
 B(f) \df f^*\colon \ B_V\longrightarrow B_U, \\
 f^*(b) \df \Res^V_{f(U)}(b).
\end{gather*}

Now, we have to show that $f^*$ is a morphism of $\Gamma$-modules. That is, we need the following diagram to commute for $\gamma \in \Gamma$:
\[
\xymatrix{
&B_U\ar[d]_{\gamma} && B_V\ar[ll]_{f^*}\ar[d]^{\gamma}\\
&B_U && B_V\ar[ll]^{f^*}.
}
\]

Let $u\in U_{[k]}$. Then $f(u) \in V_{[k]}$. Let $b \in B_V$. We have
\begin{align*}
f^*(b \mid \gamma)(f(u), \tau)&= \Res^V_{f(U)}( b \mid \gamma )(f(u), \tau)=j(\gamma, \tau)^{-k} b(f(u), \gamma\tau)\\
&= \big(\big(\Res^V_{f(U)}b\big) \mid \gamma\big) (f(u), \tau)= (f^*(b) \mid \gamma ) (f(u), \tau),
\end{align*}
and this is the desired commutativity.
\end{proof}

\section{Symmetry and unitarity}\label{SSandU}
The matrix representation $\rho$ of $\Gamma$ with respect to the basis $(f_1, \dots, f_d)$ of the conformal block $B_V$ has a number of remarkable properties, including unitarity and symmetry of $\rho(S)$. The purpose of this section is to shed light on this situation through the optic of modularity. In order to carry this out we shall, in place of $\rho$, consider matrix representations $\sigma$ of $\Gamma$ on the space $\ch_V$ for a~strongly regular VOA~$V$. In particular, $\sigma$ is a congruence representation thanks to \cite{CDT}. Under certain conditions we will prove that $\sigma$ is itself unitary and that $\sigma(S)$ is symmetric. One of the main points is the manner in which these results are related to ideas from the theory of modular forms. For example we will resurrect an old idea of Hecke, namely his so-called operator $K$ (this is \emph{not} the $K$ in our list of notations!) and show that it is closely related to unitarity and symmetry of $S$-matrices. These results and more are explained in the next few subsections.

\subsection[Hecke's operator K and the matrix J]{Hecke's operator $\boldsymbol{K}$ and the matrix $\boldsymbol{J}$}
Let us first recall the original operator $K$ as introduced by Hecke \cite{Hecke}; see also \cite[Section~8.6]{Rankin}. It operates on meromorphic functions $f$ as follows:
 \[
f|K (\tau) \df \overline{ f(- \overline{\tau})}.
\]
 In particular, if $f$ is a modular form, then so is $f|K$ and the $q$-expansions are related as follows:
\begin{gather} \label{Kaction}
f(\tau) = \sum a_nq^{n/N}, \qquad f|K(\tau) = \sum \overline{a_n} q^{n/N}.
\end{gather}
We may, and shall, extend $K$ to vector-valued modular forms in a completely parallel manner. Then equation \eqref{Kaction} holds for every component of the vector-valued modular form.

We also make use of the integer matrix $J$ of determinant $-1$ as in the notation. Conjugation by $J$ determines an outer involutive automorphism of $\Gamma$, so we may ``twist'' any representation~$\rho$ of $\Gamma$ by $J$ to obtain a representation~$\rho^J$ defined by
\begin{gather*}
\rho^J(\gamma)\df \rho\big(J\gamma J^{-1}\big).
\end{gather*}
Note that
\begin{gather*}
J \twomat abcd J^{-1}= \twomat a{-b} {-c} d.
\end{gather*}
The complex conjugate representation $\bar{\rho}$ is defined in the obvious way, namely $\bar{\rho}(\gamma)\df\overline{\rho(\gamma)}$. The combination of these twists appears in the next Lemma, which is quite general.

\begin{lem} \label{conjliniso} Let $\rho$ be any representation of $\Gamma$. Then $K$ induces a conjugate-linear isomorphism
\begin{gather*}
 M_k^!(\rho) \stackrel{\cong}{\rightarrow} M_k^!\big(\overline{\rho}^J\big), \qquad F \mapsto F|K.
\end{gather*}
\end{lem}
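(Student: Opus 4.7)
The plan is a direct verification in three steps: transformation law, meromorphy at the cusp, and involutivity.

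First I would establish the key matrix identity. For $\gamma = \stwomat abcd \in \Gamma$ and $\tau \in \uhp$, set $\tau' = -\overline{\tau}$; then a short computation gives
\[
 -\overline{\gamma\tau} \;=\; -\frac{a\overline{\tau}+b}{c\overline{\tau}+d} \;=\; \frac{a\tau' - b}{-c\tau' + d} \;=\; \big(J\gamma J^{-1}\big)\tau',
\]
where we have used $J\gamma J^{-1} = \stwomat{a}{-b}{-c}{d}$ as displayed in the preamble to the lemma. Note also that the corresponding automorphy factor is $(-c)(-\overline{\tau}) + d = c\overline{\tau}+d$, whose complex conjugate is $c\tau+d$ since $c,d \in \ZZ$.

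Next I would use this to check the modular transformation law for $F|K$. Given $F \in M^!_k(\rho)$, I compute
\[
(F|K)(\gamma\tau) = \overline{F\big(-\overline{\gamma\tau}\big)} = \overline{F\big((J\gamma J^{-1})(-\overline{\tau})\big)} = \overline{(c\overline{\tau}+d)^k \rho\big(J\gamma J^{-1}\big) F(-\overline{\tau})},
\]
which equals $(c\tau+d)^k \overline{\rho(J\gamma J^{-1})}\,\overline{F(-\overline{\tau})} = (c\tau+d)^k \overline{\rho}^J(\gamma) (F|K)(\tau)$. This is exactly the automorphy condition for $\overline{\rho}^J$ in weight $k$.

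For meromorphy at the cusp, I would choose an exponent $L$ with $\rho(T) = {\rm e}^{2\pi{\rm i}L}$, observe that $\overline{\rho}^J(T) = \overline{\rho(T^{-1})} = {\rm e}^{2\pi{\rm i}\overline{L}}$, so $\overline{L}$ is an admissible exponent for $\overline{\rho}^J$. If $F(\tau) = {\rm e}^{2\pi{\rm i}L\tau}\tilde F(\tau)$ with $\tilde F(\tau) = \sum_{n\gg -\infty} a_n q^n$ (in the logarithm-free case; otherwise work component-wise in a Jordan basis), then
\[
(F|K)(\tau) = \overline{{\rm e}^{-2\pi{\rm i}L\overline{\tau}}}\, \overline{\tilde F(-\overline{\tau})} = {\rm e}^{2\pi{\rm i}\overline{L}\tau} \sum_{n} \overline{a_n}\, q^n,
\]
so $F|K$ has the required meromorphic $q$-expansion relative to the exponent $\overline{L}$.

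Finally, the map $F \mapsto F|K$ is patently conjugate-linear, and it is an involution: $(F|K|K)(\tau) = \overline{\overline{F(-\overline{-\overline{\tau}})}} = F(\tau)$. Since $(\overline{\rho}^J)^J = \overline{\rho}$ and conjugating again restores $\rho$, applying the same construction in the reverse direction furnishes a two-sided inverse. Hence $K$ is the claimed conjugate-linear isomorphism. The only real obstacle is the bookkeeping of the matrix identity $-\overline{\gamma\tau} = (J\gamma J^{-1})(-\overline{\tau})$, which is what dictates the appearance of both the $J$-twist and the complex-conjugate twist in the target representation.
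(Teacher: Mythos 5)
Your proof is correct and follows essentially the same route as the paper: a direct verification of the weight-$k$ transformation law via the identity $-\overline{\gamma\tau}=(J\gamma J^{-1})(-\overline{\tau})$ together with conjugation of the automorphy factor, and then involutivity of $K$ to get bijectivity and conjugate-linearity. Your explicit check of meromorphy at the cusp via the exponent $\overline{L}$ is a welcome detail that the paper leaves implicit in the $q$-expansion relation recorded just before the lemma.
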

\begin{proof}It will be convenient to use standard cocycle notation $j(\gamma, \tau)=c\tau+d$. Suppose that $F \in M_k^!(\rho)$. Then for $\gamma \in \Gamma$ we have
 \begin{align*}
\overline{\rho}^J(\gamma) (F|K)(\tau)& = \overline{\rho^J(\gamma) F (- \overline{\tau})} = \overline{\rho(J \gamma J^{-1}) F (- \overline{\tau})}
= \overline{ F |_k J \gamma J^{-1}(- \overline{\tau})} \\
&=\overline{ j(J \gamma J^{-1}, - \overline{\tau})^{-k} F(J \gamma J^{-1}(-\overline{\tau})) }
= j(\gamma, \tau)^{- k} \overline{F(- \overline{\gamma \tau}) } \\
&= j(\gamma, \tau)^{- k} F|K(\gamma \tau)
= (F|K)|_k \gamma (\tau).
\end{align*}
This shows that $K$ maps $M_k^!( \rho)$ into $M_k^!\big( \overline{\rho}^J\big)$. Since $K$ is an involution, it is bijective. Finally, it is obvious that $K$ is biadditive and conjugate linear in the sense that $(aF)|K = \overline{a}(F|K)$ for $a \in \CC$. This completes the proof of the lemma.
\end{proof}

To apply this result, choose any maximal set of linearly independent vectors, call it $g_1, \dots , g_m,$ from among the irreducible characters $f_1, \dots, f_d$. We write matrices and vectors with respect to this particular basis of $\ch_V$ and in particular let $\sigma$ be the matrix representation of $\Gamma$ that it furnishes.

Introduce the (column) vector-valued modular form $E\df (g_1, \dots, g_m)^{\rm T}$. It belongs to $M_0^!(\sigma)$. We now have the following quite general results:

\begin{lem}\label{lemS2id}$\sigma(S)^2=\id$.
\end{lem}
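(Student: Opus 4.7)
The plan is to deduce the identity from two elementary inputs: the relation $S^{2}=-I$ in $\Gamma$, and the fact that $-I$ acts trivially on $\uhp$ via Möbius transformations. The only nontrivial ingredient is the linear independence of the chosen basis $g_{1},\dots,g_{m}$ of $\ch_V$, which is precisely what allows us to promote a pointwise identity on the vector-valued form $E=(g_{1},\dots,g_{m})^{\mathrm T}$ into a matrix identity for~$\sigma$.

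First, recall that $S^{2}=\stwomat{-1}{0}{\hphantom{-}0}{-1}=-I$, so it suffices to prove that $\sigma(-I)=\mathrm{id}$. Since $E\in M_{0}^{!}(\sigma)$, the transformation law in weight zero reads
\[
\sigma(\gamma)E(\tau)=E(\gamma\tau)\qquad \text{for all } \gamma\in\Gamma.
\]
Specializing to $\gamma=-I$ and using $(-I)\tau=\tau$, this gives $\sigma(-I)E(\tau)=E(\tau)$ for every $\tau\in\uhp$, i.e.\ the vector $E(\tau)$ lies in the fixed-space of $\sigma(-I)$ pointwise.

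Now the key step: the rows of the matrix $\sigma(-I)-I_{m}$ are linear functionals on $\CC^{m}$ that annihilate $E(\tau)$ for every $\tau\in\uhp$. If $\ell\in(\CC^{m})^{\ast}$ satisfies $\ell(E(\tau))=0$ for all $\tau$, then $\sum_{i}\ell_{i}g_{i}=0$ as a holomorphic function on $\uhp$; by the linear independence of $g_{1},\dots,g_{m}$ (the defining property of the basis chosen for $\ch_V$) we conclude $\ell=0$. Therefore every row of $\sigma(-I)-I_{m}$ vanishes, so $\sigma(-I)=\mathrm{id}$, and hence $\sigma(S)^{2}=\sigma(S^{2})=\sigma(-I)=\mathrm{id}$.

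There is no real obstacle here; the proof is essentially a bookkeeping exercise. The only subtlety worth flagging is that the statement genuinely requires the linearly independent basis $g_{1},\dots,g_{m}$ rather than the original (possibly dependent) list $f_{1},\dots,f_{d}$: for $\rho$ acting on the redundant basis one would only conclude that $\rho(-I)$ fixes the span of the $f_{i}(\tau)$'s in $\CC^{d}$, not that $\rho(-I)=\mathrm{id}$.
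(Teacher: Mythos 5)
Your proof is correct and follows essentially the same route as the paper: both rest on the weight-zero transformation law for $E$, the fact that $S^2=-I$ acts trivially on $\uhp$, and the linear independence of the components $g_1,\dots,g_m$. The only cosmetic difference is that you pass through $\sigma(-I)=\id$ and multiplicativity of $\sigma$, whereas the paper simply substitutes $\tau\mapsto S\tau$ in $\sigma(S)E(\tau)=E(S\tau)$ to get $\sigma(S)^2E=E$ directly.
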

\begin{proof} We have $\sigma(S)E(\tau)= E(S\tau)$: replacing $\tau$ by $S\tau$ in this equality yields $\sigma(S)^2E=E$, hence $\sigma(S)^2=\id$ follows because the components of $E$ are linearly independent.
\end{proof}

\begin{lem}\label{lemsymmid} We have
 $\sigma=\bar{\sigma}^J$, and in particular
 \begin{gather} \label{Scond}
\sigma(S)=\overline{\sigma(S)}.
\end{gather}
Especially, $\sigma$ is a congruence representation and it is unitary representation if, and only if, $\sigma(S)$~is \emph{symmetric}.
\end{lem}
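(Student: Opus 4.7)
The plan is to establish $\sigma = \bar\sigma^J$ first, from which every remaining claim follows. Each component $g_j$ of $E$ is a $q$-character of a simple $V$-module, so after choosing a common denominator $N$ its Fourier expansion in powers of $q^{1/N}$ has nonnegative integer---hence real---coefficients. Applying \eqref{Kaction} componentwise then forces $E|K = E$. By Lemma \ref{conjliniso}, $E|K$ lies in $M_0^!(\bar\sigma^J)$, so $E$ satisfies the transformation law for $\bar\sigma^J$ as well as for $\sigma$. Linear independence of the components of $E$ then pins the representation down uniquely, yielding $\sigma = \bar\sigma^J$.

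Next I would verify \eqref{Scond} by a direct matrix computation showing $JSJ^{-1} = -S$. Combined with $\sigma(-I) = \sigma(S)^2 = I$ from Lemma \ref{lemS2id}, this gives $\sigma(S) = \overline{\sigma(JSJ^{-1})} = \overline{\sigma(-S)} = \overline{\sigma(S)}$, so $\sigma(S)$ is a real matrix. The congruence assertion I would import directly from the Calegari--Dimitrov--Tang theorem cited in the introduction: $F_V$ is a weakly holomorphic vector-valued modular form whose $q^{-L}$-expansion has integer Fourier coefficients, so the associated representation $\rho$ has congruence kernel; and $\sigma$ inherits this property since it is a $\Gamma$-equivariant quotient of $\rho$ via the surjection $\CC^d \twoheadrightarrow \ch_V$, $e_i \mapsto f_i$.

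For the unitarity equivalence, observe that $\sigma(T)$ is diagonal in the basis $(g_1,\dots,g_m)$ with diagonal entries $e^{2\pi i(h_j - c/24)}$ of unit modulus (the $h_j, c$ being rational), so $\sigma(T)$ is automatically unitary. Since $\Gamma$ is generated by $S$ and $T$, unitarity of $\sigma$ reduces to unitarity of $\sigma(S)$. Together with $\sigma(S)^2 = I$ this is the same as $\sigma(S)^\ast = \sigma(S)$, i.e., Hermiticity; and by \eqref{Scond} $\sigma(S)$ is already real, so Hermitian is equivalent to symmetric.

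There is no deep obstruction in this argument---all the serious work is packaged into Lemma \ref{conjliniso}, Lemma \ref{lemS2id}, and the CDT theorem. The only slightly delicate point is tracking fractional $q$-exponents carefully when invoking reality of the Fourier coefficients of the $g_j$, but this dissolves once expansions are rewritten in a common variable $q^{1/N}$.
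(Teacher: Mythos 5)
Your proposal is correct and follows essentially the same route as the paper: integrality of the character Fourier coefficients gives $E|K=E$, Lemma \ref{conjliniso} plus linear independence of the components yields $\sigma=\bar{\sigma}^J$, evaluation at $S$ together with Lemma \ref{lemS2id} gives \eqref{Scond}, and the unitarity equivalence follows from $\sigma(T)$ being diagonal unitary and $\Gamma=\langle S,T\rangle$, with the congruence property cited from \cite{CDT} exactly as the paper does. The only cosmetic difference is that the paper writes $JSJ^{-1}=S^{-1}$ where you write $-S$ (the same matrix, since $S^{-1}=-S$), so the two derivations of \eqref{Scond} coincide.
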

\begin{proof}Consider once again the (column) vector-valued modular form $E(\tau)\in M_0^!(\sigma)$. It has components with integral Fourier coefficients just because the components are the graded characters of $V$. As a result we have from equation~\eqref{Kaction} that $E \mid K = E$. Now by Lemma~\ref{conjliniso} we deduce that for all $\gamma\in\Gamma$ we have
 \begin{gather*}
\bar{\sigma}^J(\gamma)E= E|_0 \gamma = \sigma(\gamma)E.
\end{gather*}
Because the components of $E$ are linearly independent it follows that $\sigma= \bar{\sigma}^J$ as asserted. Now $JSJ^{-1}=S^{-1}$, and therefore $\sigma(S)=\overline{\sigma}^J(S) = \overline{\sigma(S)}^{-1}$, and then equation \eqref{Scond} follows from Lemma~\ref{lemS2id}.

Because $\sigma(T)$ is a (diagonal) unitary matrix and $\Gamma=\langle S, T\rangle$, it follows that $\sigma$ is unitary if, and only if, $\sigma(S)$ is unitary, and this is equivalent to the symmetry of $\sigma(S)$ by Lemma~\ref{lemS2id} and~\eqref{Scond}.
 \end{proof}

\section{Conformal modular forms}\label{s:conformal}
We have seen that the character vector $F_V$ of a strongly regular VOA defines a vector-valued modular form satisfying many nice properties. The notion of \emph{conformal modular form} in Definition \ref{d:conformal} codifies these desirable properties, at least in the setting where $d = \dim \ch_V$. The main ideas captured in Definition \ref{d:conformal} are that $F_V$ should have coefficients that count the dimensions of a graded vector space, and that the representation underlying $F_V$ should be equal to the representation of some modular tensor category \cite{Huang}. However, this definition should be seen more as an approximation or heuristic. It will surely require adjustment, especially as modularity results are expanded more deeply into the realm of logarithmic conformal field theory. For more on this exciting subject, which we otherwise ignore here, the reader can consult \cite{CreutzigGannonLCFT,CreutzigRidoutLCFT2013,FLohrLCFT2002,FlohrLCFT2003,FuchsLCFT2019,GaberdielLCFT,GainutdinovLCFT2013,GainutdinovLCFT2009, KawaiLCFT,Nagi2005} and the extensive lists of references found in these sources.

Before describing precisely what properties this approximation of the notion of a conformal modular form should have, we first make the technical observation that it is unambiguous to say that a weakly holomorphic modular form has nonnegative integral Fourier coefficients. Indeed, given any choice of exponents $L$ for a representation $\rho$, a weakly-holomorphic form $F \in M^!(\rho)$ can be expanded as $F(q) = q^Lf(q)$ for $f \in \lseries{\CC^d}{q}$. Different choices of $L$ correspond to adding integers to eigenvalues of $L$, and these adjust the $q$-expansion only by rescaling coordinates by integer powers of $q$. Therefore whether $f(q)$ has integer coefficients, or nonnegative coefficients, is independent of the choice of $L$.
\begin{dfn}
 \label{d:quasiconformal}
 A weakly holomorphic vector-valued modular form $F = (f_j)_{j=1}^d$ of weight zero for a representation $\rho \colon \Gamma \to \GL_d(\CC)$ is said to be a \emph{quasi-conformal modular form} provided that the following conditions are satisfied:
\begin{enumerate}\itemsep=0pt
\item[1)] the Fourier coefficients of $F$ are nonnegative integers,
\item[2)] the first nonzero Fourier coefficient of $f_1$ is $1$,
\item[3)] $\rho(S)$ is real symmetric with entries in an abelian extension of $\QQ$,
\item[4)] all entries in the first row of $\rho(S)$ are nonzero,
\item[5)] $\rho(T)$ is of finite order.
\end{enumerate}
\end{dfn}

\begin{rmk} \label{r:symmetry} Hypothesis (3) on the symmetry of $\rho(S)$ is a reasonably strong hypothesis and it could be dropped from the list of axioms. For example, it implies that every module of a~hypothetical VOA whose character vector is $F$ is self-dual. This situation is rare but does occur infinitely often. Nevertheless we (and others) have found it to be a useful hypothesis in classification problems. Note also that Ng--Wang--Wilson have shown in~\cite{NgWangWilson} that every congruence representation $\rho$ has a basis such that $\rho(T)$ is diagonal and $\rho(S)$ is symmetric, though this basis may in practice not agree with the basis of characters. This transpires for example with $A_{2,1}$, which has three simple modules but a two-dimensional space of characters. In this case the $S$-matrix acting on the natural basis of characters is
 \[
\frac{1}{\sqrt{3}}\twomat 121{-1},
 \]
 which is not symmetric. Notice though that, consistent with~\cite{NgWangWilson}, this matrix can be symmetrized while keeping the corresponding $T$-matrix diagonal. For these reasons, and with this warning, we will retain the symmetry condition above and in our discussion below.
\end{rmk}

Thanks to the recent breakthrough proof of the unbounded denominator conjecture for vector-valued modular forms, due to Calegari--Dimitrov--Tang \cite{CDT}, it follows from the axioms~(1) and~(5) above that $\ker \rho$ is a \emph{congruence subgroup}. In principle, this congruence property could have been deduced from the modular tensor category approach of \cite{DongLinNg} using Huang's results~\cite{Huang} on the representation arising from the associated conformal block, though, as noted in Remark~\ref{r:symmetry}, in general this representation can differ slightly from the representation $\rho$ defined by the characters (for example, if~$V$ has some modules that are not self-dual). The approach to the congruence nature of $\rho$ via~\cite{CDT} appears to be more direct and more general than the approach via modular tensor categories using \cite{DongLinNg, Huang}.

Suppose that $F$ is a quasi-conformal modular form for a representation $\rho$ of rank $d$, and write~$S_{ij}$ for the $(i,j)$ entry of $\rho(S)$. The \emph{fusion rules} of $\rho(S)$ are defined by
\[
 N^\nu_{\lambda\mu} = \sum_{\sigma=1}^d \frac{S_{\lambda\sigma}S_{\mu\sigma}\overline{S_{\sigma\nu}}}{S_{0\sigma}}.
\]
Since we have assumed that $S$ is real symmetric, this expression simplifies to
\[
 N^\nu_{\lambda\mu} = \sum_{\sigma=1}^d \frac{S_{\lambda\sigma}S_{\mu\sigma}S_{\nu\sigma}}{S_{0\sigma}}.
\]
For a strongly regular VOA $V$ with irreducible modules $M_1,\dots, M_d$, the fusion rule $N_{ij}^k$ calculates the multiplicity of $M_k$ in the fusion product $M_i\boxtimes M_j$ in the modular tensor category of representations of $V$. See \cite{Huang} for more details.

\begin{dfn}
 \label{d:conformal}
 A quasi-conformal modular form is said to be \emph{conformal} if the fusion rules of the underlying representation are all nonnegative integers.
\end{dfn}

To date, most attention has focused on forms of rank $\leq 3$. Here are two interesting examples in rank $4$.
\begin{ex}
 \label{ex:hardhexagon}
 The following matrices define an irreducible representation of $\Gamma$ of rank $4$:
 \begin{gather*}
 \rho(T) =\diagg\big({\rm e}^{2\pi{\rm i}/40},{\rm e}^{2\pi{\rm i} 31/40},{\rm e}^{-2\pi{\rm i}/40},{\rm e}^{2\pi{\rm i} 9/40}\big),\\
 \rho(S) =\frac{1}{4}\sqrt{1+\frac{1}{\sqrt{5}}}\left (\begin{matrix}
 2&-2&\sqrt{5}-1&1-\sqrt{5}\\
 -2&-2&\sqrt{5}-1&\sqrt{5}-1\\
 \sqrt{5}-1&\sqrt{5}-1&2&2\\
 1-\sqrt{5}&\sqrt{5}-1&2&-2
 \end{matrix}\right ).
 \end{gather*}
This is realized as the monodromy representation of the modular linear differential equation:
 \[
 D^4F -\frac{949}{7200}E_4D^2F + \frac{139}{21600}E_6DF -\frac{279}{2560000}E_4^2F = 0.
\]
A basis of solutions of this differential equation defines the coordinates of a modular form $F$ with monodromy $\rho$ and $q$-expansion of the form
\[
 F(q) = q^L\left(\begin{matrix}
 1 + q^{2} + q^{3} + 2q^{4} + 2q^{5} + 4q^{6} + 4q^{7} + 6q^{8} + 7q^{9} + \cdots \\
 1 + q + q^{2} + 2q^{3} + 2q^{4} + 3q^{5} + 4q^{6} + 5q^{7} + 7q^{8} + 9q^{9} + \cdots \\ 1 + q + q^{2} + 2q^{3} + 3q^{4} + 4q^{5} + 5q^{6} + 7q^{7} + 9q^{8} + 12q^{9} + \cdots\\
 1 + q + 2q^{2} + 2q^{3} + 3q^{4} + 4q^{5} + 6q^{6} + 7q^{7} + 10q^{8} + 12q^{9} + \cdots
 \end{matrix}\right),
\]
where $L = \diagg(1/40,31/40,-1/40,9/40)$. The sequences of Fourier coefficients of the coordinates correspond, in order, to the sequences A122134, A122130, A122129 and A122135 in the OEIS \cite{OEIS}. They are in fact partition functions that arise in the hard hexagon model of statistical mechanics, solved by Baxter in~\cite{Baxter}. In this case we have $\tilde{c}=\tfrac 35$, and by \cite[Theorem~8]{MasonLattice}, it follows that $F(q)$ is the character vector of the discrete series Virasoro VOA $\Vir(c_{3,5})$, which is discussed in~\cite{ArikeNagatomoSakai}, where they even write down the MLDE above. We will discuss some other examples of rank $4$ conformal modular forms in Section~\ref{s:rank4}. See also~\cite{GHM, HM,kaidi2021, MMS2}.
\end{ex}

\begin{rmk}
If the $S$-matrix of a modular tensor category has entries $S_{ij}$, then the \emph{pivotal dimension} of the~$j$th indexed object of the category is the ratio $\tfrac{S_{1j}}{S_{11}}$. Pivotal dimensions are known to be positive real algebraic numbers. On the other hand, in the previous example, one has $\tfrac{\rho(S)_{12}}{\rho(S)_{11}} = -1$. This shows that the hypothesis $\lambda_i > 0$ in \cite[Proposition~3.11]{DongLinNg} is necessary for identifying pivotal and quantum dimensions, even for VOAs as down-to-earth as discrete series Virasoro algebras.
\end{rmk}

Part (2) of Definition~\ref{d:conformal} insists that the first coordinate of a conformal modular form is normalized to have first nonzero Fourier coefficient equal to~$1$. On the VOA side of things, this means that $\dim V_0 = 1$. Such a restrictive condition, as far as the arithmetic of~$F_V$ goes, leads to many examples of quasi-conformal modular forms that are not conformal, as in the following example.
\begin{ex}\label{ex:quasiconformal} The following matrices define an irreducible representation of $\Gamma$ of rank~$4$:
 \begin{gather*}
 \rho(T) =\diagg\big({\rm e}^{-2\pi{\rm i} 41/40},{\rm e}^{2\pi{\rm i} 9/40},{\rm e}^{2\pi{\rm i} 31/40},{\rm e}^{2\pi{\rm i} 41/40}\big), \\
 \rho(S) =\frac{1}{4}\sqrt{1+\frac{1}{\sqrt{5}}}\left (\begin{matrix}
 2&2&\sqrt{5}-1&\sqrt{5}-1\\
 2&-2&\sqrt{5}-1&1-\sqrt{5}\\
 \sqrt{5}-1&\sqrt{5}-1&-2&-2\\
 \sqrt{5}-1&1-\sqrt{5}&-2&2
 \end{matrix}\right ).
 \end{gather*}
 This is realized as the monodromy representation of the modular linear differential equation:
 \[
 D^4F -\frac{8509}{7200}E_4D^2F + \frac{19039}{21600}E_6DF -\frac{468999}{2560000}E_4^2F = 0.
\]
A basis of solutions of this differential equation defines the coordinates of a modular form $F$ with monodromy $\rho$ and $q$-expansion of the form
\begin{gather*}
 F(q) = q^L\left(\begin{matrix}
 1 + 120786q^{2} \!+\! 14632531q^{3} \!+\! 629268246q^{4} \!+\! 15536981160q^{5} \!+\! \cdots\\
 492 \!+\! 466580q \!+\! 40164912q^{2} \!+\! 1462898532q^{3} \!+\! 32571172112q^{4} \!+\! \cdots\\
 22591 \!+\! 3863061q \!+\! 193342101q^{2} \!+\! 5227692946q^{3} \!+\! 95716064232q^{4} \!+\! \cdots \\
 99180 \!+\! 11114772q \!+\! 461579312q^{2} \!+\! 11153566692q^{3} \!+\!
189039000612q^{4} \!+\! \cdots
 \end{matrix}\right)\!,
\end{gather*}
where $L = \diagg(-41/40,9/40,31/40,41/40)$. In this case the fusion rules are all $\pm 1$ or $0$, and hence this example is quasi-conformal but not conformal. It is possible to permute coordinates to fix the sign problem with the fusion rules, but then condition~(2) of Definition~\ref{d:quasiconformal} fails. This failure can't be corrected by rescaling coordinates without introducing nontrivial denominators. Thus, this provides an example of a quasi-conformal modular form that has nothing at all to do with strongly regular VOAs, due to part (2) of Definition \ref{d:quasiconformal}.
\end{ex}

A basic problem that has received recent attention is the following.
\begin{prob}
Determine which conformal modular forms are realized as the characters of strongly regular VOAs.
\end{prob}
This problem is difficult even in rank one, where the answer is known only for small values of the central charge $c$. We refer the reader to Section~\ref{SSholVOAs} for a more detailed discussion, but every modular form $j+m$ for~$m$ a nonnegative integer is a conformal modular form according to Definition~\ref{d:conformal}, whereas there are fewer than $71$ values of $m$ that occur as the character of a~rank one (holomorphic) VOA with $c = 24$. There is no reason to expect that the situation will change in higher ranks. Nevertheless we do not currently have a single example of an irreducible representation $\rho$ with $\dim \rho \geq 2$ and a conformal modular form for $\rho$ that provably does \emph{not} correspond to a strongly regular VOA! It would be interesting to produce such an example, and we expect that one can be found in the case where $\dim \rho = 2$ and the exponents are chosen so that the minimal weight is $-6$, which forces the corresponding space of forms of weight zero to be two-dimensional. The case when $\dim \rho =2$ is discussed in more detail in Section~\ref{s:rank2} below.

In the remainder of this paper we discuss how to compute examples of conformal modular forms in arbitrary rank, and then we provide an overview of known examples in low dimensions. One open question following the breakthough of \cite{CDT} is whether their diophantine techniques can be utilized in the study of conformal specializations of families of modular forms, providing a more general suite of tools than the hypergeometric approach of papers such as \cite{FM6} and \cite{MNS}. We discuss the families of modular forms of interest next.

\section{Frobenius families}\label{s:frobenius}
A \emph{Frobenius family of modular forms}, as in Definition \ref{d:frobfam} below, is a family of modular forms that is essentially equivalent to a family of ordinary differential equations. Most VOA classification results to date have focused on classifying character vectors arising in specific Frobenius families. We will discuss that work below after briefly formalizing the notion of Frobenius family.

Since modular forms have slow growth at cusps, we focus on regular singular equations. Moreover, since we are primarily interested in vector-valued modular forms for the full modular group $\Gamma$, we focus on equations whose singularities are at $0$, $1$ and $\infty$, and whose monodromy factors through~$\Gamma$. The uniformizer $K = 1728/j$ as in the notation subsection maps $\mathcal{H} \rightarrow \PP^1\setminus \{0,1,\infty\}$, where the orbit of the cusp maps to $0$, the orbit of ${\rm i}$ maps to $1$, and the orbit of the other elliptic point maps to $\infty$. We shall use $K$ as the local coordinate at $0$ and write $\theta = K{\rm d}/{\rm d}K$. The only advantage provided by $K$ over other uniformizing maps is that much of the classical literature on differential equations assumes that $0$, $1$ and $\infty$ are among the singularities of an equation with at least three singularities, so that many classical formulas are most naturally expressed in terms of~$K$.

As discussed in~\cite{FM2}, pulling back regular equations on $\PP^1\setminus \{0,1,\infty\}$ via the uniformizing map~$K$ yields modular linear differential equations on the complex upper half-plane (see also~\mbox{\cite{BantayGannon, Gannon}}). If
\begin{enumerate}\itemsep=0pt
\item[1)] the monodromy group factors through $\Gamma$, and
\item[2)] the solutions are in fact holomorphic at elliptic points,
\end{enumerate}
then the solutions are vector-valued modular forms, possibly with a pole of finite order at the cusp.

Before we discuss modular aspects of ordinary differential equations, we shall recall some classical results. During this general discussion we work with a parameter $z$ on $\PP^1$ and let $w = 1/z$ be a coordinate at infinity. Later we will set $z = K$.

A regular singular ordinary differential equation on $\PP^1\setminus \{0,1,\infty\}$ of degree $d \geq 1$ takes the form
\begin{equation}\label{eq1}
\left(\sum_{j=0}^d(1-z)^jP_{d-j}(z)\theta^j\right)f = 0,
\end{equation}
where $\theta = z{\rm d}/{\rm d}z$, $P_j(z) \in \CC[z]$ satisfies $\deg P_j \leq j$ for all~$j$, and $P_0(z) = 1$. The \emph{indicial polynomial at $z=0$} of equation~\eqref{eq1} is
\[
 R_0(X) = \sum_{j=0}^d P_{d-j}(0)X^j.
\]
Indicial polynomials $R_1(X)$ and $R_\infty(X)$ at $1$ and $\infty$, respectively, are defined similarly by expressing equation~\eqref{eq1} in terms of the parameters $z-1$ and $w = 1/z$. The roots of these indicial polynomials are called \emph{local exponents} at the corresponding points. If $e$ is one of these roots, then $\lambda = {\rm e}^{2\pi{\rm i} e}$ is an eigenvalue of the corresponding local monodromy transformation. If the exponents at a point $z = p$ do not differ by integers, then the corresponding local monodromy transformation is diagonalizable and its conjugacy class is determined uniquely by~$R_p(X)$.
\begin{ex} \label{ex:hyper} The classical Gauss hypergeometric equation is the differential equation
 \[
y'' + \frac{\gamma - (\alpha+\beta+1) z }{z(1-z)}y' + \frac{-\alpha\beta}{z(1-z)}y = 0.
\]
The indicial polynomials are
\begin{gather*}
 R_0(X) =X(X-1) +\gamma X = X(X+\gamma-1),\\
 R_1(X) =X(X-1)-(\gamma-\alpha-\beta-1)X = X(X+\alpha+\beta-\gamma),\\
 R_{\infty}(X) = X(X-1)-(-\alpha-\beta-1)X +\alpha\beta = (X-\alpha)(X-\beta).
\end{gather*}
\end{ex}

Next we recall the Frobenius method for solving an equation such as \eqref{eq1}. For simplicity in this paper we restrict to the case where the exponents $e_1,\dots, e_d$ at $z=0$ are all distinct mod~$\ZZ$. Let $e = e_j$ be any one of these exponents. The Frobenius method proceeds by searching for solutions of the form $f(z) = z^e\sum_{n\geq 0} a_nz^n$. The main theorem is that such solutions converge in a neighbourhood of $z=0$. In our setting they converge in the region $\abs{z} < 1$.

We now set $z=K$ and make the following hypotheses on equation \eqref{eq1}:
\begin{enumerate}\itemsep=0pt
\item[1)] the monodromy representation factors through $\SL_2(\ZZ)$,
\item[2)] the exponents at $K=1$ and $K= \infty$ are nonnegative,
\item[3)] the exponents at $K=0$ are rational numbers that are distinct mod $\ZZ$.
\end{enumerate}
Condition (1) ensures that a basis of solutions of \eqref{eq1} defines a vector-valued modular form for the monodromy representation, but possibly with poles at the cusp and elliptic points. Condition~(2) ensures that the solutions are in fact holomorphic functions on $\uhp$. Condition~(3) allows us to use the classical Frobenius method to compute a basis of solutions near $K=0$ without having to worry about logarithmic terms. Note too that these conditions allow for the solutions to have poles at the cusp if the exponents at the cusp are negative, and for applications to vertex operator algebras, we have already observed that negative exponents at $K=0$ are unavoidable.

\begin{rmk}The differential equations satisfied by character vectors $F_V$ for strongly regular VOAs $V$ satisfy conditions (1), (2) and (3) above. Conditions~(1) and~(2) are a consequence of Zhu's theorem. The rationality of the exponents at $K=0$ is a consequence of a result of Anderson--Moore~\cite{AndersonMoore} or~\cite {DLMModular} which imply that $\rho(T)$ is of finite order for strongly regular $V$. Finally, Tuba--Wenzl~\cite{TubaWenzl} classified the irreducible representations of $\Gamma$ up to rank~$5$, and all such $\rho$ with $\rho(T)$ of finite order have distinct eigenvalues, so that condition~(3) is satisfied in these cases. In general one can and probably should discard condition~(3), but we will keep it in force in this paper for simplicity, since it holds in the cases that we consider below.
\end{rmk}

Recall that when the exponents at $K=0$ are distinct mod $\ZZ$, for each exponent $e_j$ there is a recursively computable solution of the form
\begin{gather*}
 f_j(K) = K^{e_j}\sum_{n \geq 0}a_nK^n,
\end{gather*}
where we are free to take $a_0 = 1$. Write $(1-K)^nP_n(K) = \sum_{k=0}^d p_{nk}X^k$ for scalars $p_{nk}$, and set
\[
Q_k(X) = \sum_{n=0}^d p_{nk}X^n.
\]
Observe that $Q_0(X) = R_0(X)$. Starting from the identity
\[
\theta^m(f_j(K)) = K^{e_j}\sum_{n\geq 0}(e_j+n)^ma_nK^n,
\]
a straightforward computation shows that $f_j(K)$ is a solution to equation \eqref{eq1} if and only if the coefficients $a_n$ satisfy the following recursive formula for $n \geq 1$:
\begin{equation} \label{eq:recurrence}
 a_n = -\sum_{k=1}^{\min(d,n)}\frac{Q_k(e_j+n-k)}{Q_0(e_j+n)}a_{n-k}.
\end{equation}
Observe that our hypotheses on the exponents ensures that $Q_0(e_j+n)$ is never zero for $n \geq 1$.

All of this material is rather classical. We now examine some features regarding how the Frobenius solutions behave in families. That is, we now suppose that the coefficients of equation~\eqref{eq1} satisfy $P_n(K)\in \QQ[e_1,\dots, e_d][K]$. In fact, after rescaling, we may assume that $P_n(K) \in \ZZ[ e_1,\dots, e_d][K]$. Then we also have $Q_k(X) \in \ZZ[e_1,\dots, e_d][X]$ for all $k$. Therefore, if we normalize the solutions so that $a_0 = 1$, the solutions to the recurrence relation~\eqref{eq:recurrence} are rational functions in $\QQ(e_1,\dots, e_d)$.
\begin{ex}
From \cite[Section~4]{FM2}, the monic modular linear differential equation of degree~$2$ and weight~$0$ corresponds to the ODE
 \[
\big((6-6K)\theta^2 - (2K + 1)\theta + 6\alpha\big)f =0,
\]
where $\alpha = e_1e_2 \in \QQ[e_1,e_2]$ and $e_1+e_2 = \tfrac{1}{6}$. We have
\begin{gather*}
 Q_0(X) = 6\big(X^2-\tfrac 16 X + \alpha\big),\qquad
 Q_1(X) = -6X\big(X+\tfrac{1}{3}\big),\qquad
 Q_2(X) = 0.
\end{gather*}
Since $Q_2(X) = 0$, the recurrence relation \eqref{eq:recurrence} relative to the exponent $e_1$ (say) is the hypergeometric relation
\[
 a_n = -\frac{Q_1(e_1+n-1)}{Q_0(e_1+n)}a_{n-1}=\frac{(e_1+n-1)\big(e_1+n-\tfrac 23\big)}{(e_1-e_2+n)n}a_{n-1} = \frac{\big(e_1)_n(e_1+\tfrac 13\big)_n}{(e_1-e_2+1)_nn!},
\]
where $(b)_n = b(b+1)\cdots (b+n-1)$ is the rising factorial. Therefore, in this case a basis of solutions is given by the coordinates of the series
\[
 F(K) \df \twovec{f_1(K)}{f_2(K)} = \left(\begin{matrix} K^{e_1}{}_2F_1\big(e_1,e_1+\tfrac 13; e_1-e_2+1;K\big)\vspace{1mm}\\
 K^{e_2}{}_2F_1\big(e_2,e_2+\tfrac 13; e_2-e_1+1;K\big)\end{matrix}\right).
\]
This is an example of a family of modular forms, and it is the prototype for the notion of Frobenius family below.
\end{ex}

This example, and a similar computation in degree $3$, allow one to describe essentially all vector-valued modular forms in ranks $2$ and $3$. The situation changes once one turns to forms of rank $4$. In this case the recurrence relation~\eqref{eq1} does not simplify and, in particular, is not hypergeometric. For example, it is clear by group theory alone that $_4F_3$ cannot be used directly to describe vector-valued modular forms of rank~$4$ for $\SL_2(\ZZ)$: the irreducible representations of $\SL_2(\ZZ)$ of rank $4$ have local exponents equal to $0$, $0$, $\tfrac 12$, $\tfrac 12\pmod{\ZZ}$ at $K=1$, while the local exponents of $_4F_3$ are $0$, $0$, $0$, $\tfrac 12 \pmod{\ZZ}$. This incompatibility ensures that $_4F_3$ does not help directly in rank~$4$, and computations suggest that naive use of more general hypergeometric series is not immediately helpful either. Therefore it is desirable to introduce more general methods.

To describe what we mean by a Frobenius family of modular forms, let $e_1$ through $e_d$ denote coordinates on affine space, and set
\[
 X = \big\{(e_1,\dots, e_d)\in \CC^d \mid e_i - e_j \not \in \ZZ \textrm{ for all } i\neq j\big\}.
\]
This set is the parameter space for Frobenius families. We will be mostly interested in the rational points of this space.
\begin{dfn} \label{d:frobfam}
 For polynomials $P_n(K) \in \ZZ[e_1,\dots, e_d][K]$ of degree $\leq d$ in $K$ defining a regular singular ODE of the form~\eqref{eq1}, the \emph{Frobenius family} of solutions is the formal vector-valued series
 \[
 F(K) = \left(\begin{matrix}
f_1(K)\\ f_2(K)\\ \vdots\\ f_d(K)
 \end{matrix}
 \right),
\]
where each $f_j(K) \in K^{e_j}\pseries{\QQ(e_1,\dots, e_d)}{K}$ is defined by the recurrence relation~\eqref{eq:recurrence}, normalized so that $f_j(K) = K^{e_j}(1 + O(K))$.
\end{dfn}

\begin{rmk}We stress that a Frobenius family is expressed most naturally in terms of a~$K$-expansion. The $q$-expansion can be extracted from this by substituting the $q$-expansion for $K = 1728/j$ and using the binomial formula to expand the terms $K^{e_j}$. In particular, if the $K$-expansion coefficients are rational, then the $q$-expansion coefficients are contained in $\QQ(1728^{e_j})$. On the other hand, the $q$-expansion of $F(K)$ could be integral with the $K$-expansion being non-integral.
\end{rmk}

\begin{rmk}The definition of a Frobenius family above may be slightly too restrictive. For example, if one wants to consider the family $j+m$ for varying $m\in \CC$ as a Frobenius family of modular forms, it corresponds to the family of ODEs $\frac{{\rm d}f}{{\rm d}j} = \frac{1}{j+m}f$ which has a moving apparent singularity at $j = -m$. Therefore one may want to allow the differential equation~\eqref{eq1} to have a finite number of additional apparent singularities.
\end{rmk}

If $F$ is a Frobenius family, then for each exponent tuple $(e_j) \in X$, the specialization of~$F$ converges to a local solution of the corresponding ODE near $K=0$. When the (global) monodromy representation of this specialization factors through $\SL_2(\ZZ)$ for each point of some open set $U \subseteq X$, and the specializations in~$U$ are all holomorphic at $1$ and $\infty$, then we say that $F$ is a \emph{Frobenius family of modular forms}.
\begin{lem} \label{l:frobfam}
 Let $F = (f_j)$ be a Frobenius family of modular forms, where
 \[ f_j(K) = K^{e_j}\sum_{n\geq 0}a_{jn}K^n.\]
Let $N$ be the maximal degree in $\QQ[e_1,\dots, e_d]$ of all coefficients of the $P_n(K)$ of equation~\eqref{eq1}. Then the following properties hold:
 \begin{enumerate}\itemsep=0pt
 \item[$(a)$] each coefficient $a_{jn} \in \QQ(e_1,\dots, e_d)$ is a ratio of polynomials of degree $\leq (N+d)^n$,
 \item[$(b)$] for each $j$ and for each $n \geq 1$,
 \[
 \bigg(n!\prod_{i\neq j}(e_j-e_i+1)_n\bigg) a_{jn} \in \ZZ[e_1,\dots, e_d].
 \]
 \end{enumerate}
\end{lem}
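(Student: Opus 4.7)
The plan is to reduce both parts to a single induction on $n$, driven by the Frobenius recurrence~\eqref{eq:recurrence}, after identifying a convenient common denominator. Recall the paper's observation that $Q_0(X) = R_0(X) = \prod_{i=1}^d(X - e_i)$. A direct computation then gives
\[
Q_0(e_j + m) \;=\; m\prod_{i\neq j}(e_j - e_i + m),
\qquad
\prod_{m=1}^n Q_0(e_j + m) \;=\; n!\,\prod_{i\neq j}(e_j - e_i + 1)_n,
\]
which explains exactly why the denominator appearing in part~(b) is the natural one for clearing the recurrence.

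For part~(b), I would introduce $P_{j,n} \df n!\bigl(\prod_{i\neq j}(e_j - e_i + 1)_n\bigr)\,a_{j,n}$, with $P_{j,0} = 1$. Multiplying~\eqref{eq:recurrence} through by $\prod_{m=1}^{n-1}Q_0(e_j+m)$ rewrites it as the polynomial identity
\[
P_{j,n} \;=\; -\sum_{k=1}^{\min(d,n)} Q_k(e_j + n - k)\left(\prod_{m=n-k+1}^{n-1}Q_0(e_j + m)\right)P_{j,n-k}.
\]
Since $P_n(K)\in\ZZ[e_1,\dots,e_d][K]$ forces each $Q_k(X)\in\ZZ[e_1,\dots,e_d][X]$, every factor on the right-hand side lies in $\ZZ[e_1,\dots,e_d]$, and the inductive hypothesis that each $P_{j,n-k}$ lies in $\ZZ[e_1,\dots,e_d]$ finishes the step.

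For part~(a), the very same recursion controls the total $e$-degree. Since each coefficient of $Q_k(X)$ has degree at most $N$ in $e_1,\dots,e_d$ and $Q_k$ has $X$-degree at most $d$, the specialization $Q_k(e_j + n - k)$ has total degree at most $N + d$, and each factor $Q_0(e_j + m)$ has degree at most $d \leq N + d$. A loose induction on $n$ with base case $\deg P_{j,0} = 0$ then yields $\deg P_{j,n} \leq (N+d)^n$, while the clearing denominator $n!\prod_{i\neq j}(e_j - e_i + 1)_n$ has degree at most $dn \leq (N+d)^n$; cancellation when reducing the fraction $a_{j,n}$ can only decrease both bounds. The one genuinely conceptual step in the whole argument is spotting the telescoping identity for $\prod_m Q_0(e_j + m)$; once that denominator has been matched with $n!\prod_{i\neq j}(e_j - e_i + 1)_n$, both claims follow from a single routine induction, and I expect the only technical nuisance to be tracking the empty product that appears in the $k=1$ summand.
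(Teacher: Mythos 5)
Your argument is correct and follows essentially the same route as the paper's proof: both rest on the identity $Q_0(X)=\prod_i(X-e_i)$, the resulting Pochhammer telescoping $\prod_{m=1}^n Q_0(e_j+m)=n!\prod_{i\neq j}(e_j-e_i+1)_n$, and induction on the recurrence \eqref{eq:recurrence}, with part (a) handled by the same loose degree count. Your normalization $P_{j,n}$ merely rewrites the paper's displayed computation in cleared-denominator form, so there is nothing substantively different to flag.
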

\begin{proof}
 For (a), first observe that by equation \eqref{eq:recurrence} we have
 \[
 a_{j1} = -\frac{Q_1(e_j)}{Q_0(e_j+1)}.
\]
The numerator and denominator both have degree at most $N+d$, which proves part (a) in this case. The general case follows by induction. Part (b) also follows by \eqref{eq:recurrence} and induction, since
\begin{align*}
 \left(n!\prod_{i\neq j}(e_j-e_i+1)_n\right) a_{jn} &= -\left(\prod_{i=1}^d(e_j-e_i+1)_n\right)\sum_{k=1}^{\min(d,n)}\frac{Q_k(e_j+n-k)}{\prod_{i=1}^d(e_j-e_i+n)}a_{j,n-k}\\
 &=-\sum_{k=1}^{\min(d,n)}Q_k(e_j+n-k)\left(\prod_{i=1}^d(e_j-e_i+1)_{n-1}\right)a_{j,n-k}.\tag*{\qed}
 \end{align*}\renewcommand{\qed}{}
\end{proof}

\begin{rmk}Part (b) of Lemma~\ref{l:frobfam} can be used to find explicit denominators for the coefficients of a Frobenius family. In practice, though, there tends to be significant cancellation among the numerators and denominators in the recursive computation of the coefficients $a_{jn}$, and this bound on the denominators tends to be quite poor for arithmetic applications.
\end{rmk}
\begin{ex}\label{ex:rank4} In~\cite{FM2} it was shown that the general monic MLDE of weight $0$ and degree $4$ corresponds to the ODE defined by the following differential operator:
\begin{gather*}
\big(K^2-2K+1\big)\theta^{4} + \big(2K^2-K-1\big) \theta^{3} + \big(\tfrac{11}{9} K^{2} - \tfrac{9 a+7}{9}K + \tfrac{36 a + 11}{36}\big) \theta^{2} \\
 \qquad{}+ \big(\tfrac{2}{9} K^{2} - \tfrac{3 a + 9 b + 1}{9}K +\tfrac{- 6 a + 36 b - 1}{36}\big) \theta + c.
\end{gather*}
We have the relations
\begin{gather*}
 1 = e_1+e_2+e_3+e_4,\qquad
 a = \sigma_2(e_1,e_2,e_3,e_4)-\tfrac{11}{36},\\
 b = \tfrac{1}{6}a+\tfrac{1}{36}-\sigma_3(e_1,e_2,e_3,e_4),\qquad
 c = e_1e_2e_3e_4,
\end{gather*}
where the $\sigma_n(e_1,\dots, e_d)$ denote elementary symmetric polynomials of degree $n$. In this case
\begin{align*}
 Q_0(X) &= X^4-X^3+\big(a+\tfrac{11}{36}\big)X^2+\big(b-\tfrac{1}{6}a-\tfrac{1}{36}\big)X+c\\
 &= (X-e_1)(X-e_2)(X-e_3)(X-e_4),\\
 Q_1(X) &= -X\big(2X^3+X^2-\big(\tfrac 79-a\big)X+\tfrac{3a+9b+1}{9}\big),\\
 Q_2(X) &= \tfrac{1}{9}X(X+1)(3X+1)(3X+2).
\end{align*}
We obtain the following recursive identities for the $K$-series coefficients of the Frobenius family:
\begin{gather*}
 a_{j1} = -\tfrac{Q_1(e_j)}{\prod_{i\neq j}(e_j-e_i+1)},\qquad
 a_{jn} = -\tfrac{Q_1(e_j+n-1)}{Q_0(e_j+n)}a_{n-1}-\tfrac{Q_2(e_j+n-2)}{Q_0(e_j+n)}a_{n-2}.
\end{gather*}
Therefore if we define a matrix sequence
\[
 M_{jn} = \frac{1}{Q_0(e_j+n)}\twomat{-Q_1(e_j+n-1)}{-Q_2(e_j+n-2)}{Q_0(e_j+n)}{0}
\]
then we find by recursion that
\[
\twovec{a_{jn}}{a_{j,n-1}} = M_{jn}\cdots M_{j2}M_{j1}\twovec 10.
\]
Unfortunately, in general this sequence is rather difficult to analyze, and the degrees of the coefficients as rational functions in the exponents grow rather quickly.
\end{ex}

While it is rarely possible to compute exact expressions for the Fourier coefficients of a~Frobenius family of modular forms, they can be computed recursively, and sometimes knowing only a~few terms is enough to give nontrivial information about any conformal modular forms that are specializations of the Frobenius family. This is because conformal modular forms have coefficients that are positive integers, and the sign of each coefficient $a_{jn}$ is constant on the connected components of the divisor of $a_{jn}$, regarded as a rational function of the exponents $e_1,\dots, e_d$. Therefore, this limits the search for conformal specializations of a~Frobenius family to subsets of affine space defined by simple algebraic inequalities. Using only the \emph{first} coefficient of a Frobenius family of rank three allowed the authors in~\cite{FM6} to reduce the classification problem for conformal modular forms in a two-parameter family to a far more manageable computation. It is clear that this technique has a wider range of applications, and we give some new examples below in Section~\ref{s:rank4}.

\begin{rmk}We have focused on the $K$-series expansion of a Frobenius family, normalized so that the $j$th coordinate takes the form $K^{e_j}(1+O(K))$. This normalization of the Frobenius family then leads to a $q$-series of the form
 \[
 f_j(q) = (1728)^{e_j}q^{e_j} + O\big(q^{e_j+1}\big).
\]
In this basis, the monodromy representation rarely has $\rho(S)$ symmetric. As we will show in Theorem~\ref{t:rank2} in Section~\ref{s:rank2}, symmetrizing the $S$-matrix can be a challenging computation that typically requires one to introduce some analytic factors that are not algebraic.
\end{rmk}

\section{Strongly regular VOAs with two modules}\label{s:rank2}
To describe character vectors of strongly regular VOAs with two simple modules it is helpful to first recall the structure of modules of vector-valued modular forms in rank $2$. In the following theorem we add precision to some past work~\cite{MNS} by choosing a~basis so that $\rho(S)$ is a~symmetric matrix. This new feature is obtained from classical formulae for the monodromy of hypergeometric series.

\begin{thm} \label{t:rank2} Let $\rho$ be an irreducible representation of $\Gamma$ of rank $2$ with $\rho(T)$ diagonal, and let $L =\diagg(e_1,e_2)$ be exponents for $\rho(T)$. Assume that $e_1$, $e_2$ and $e_1-e_2$ are not integers. Then the minimal weight for $(\rho,L)$ is $k_1 = 6\Tr(L)-1$ and there exists a basis for $\rho$ such that
 \begin{gather*}
 \rho(T)= \diagg\big({\rm e}^{2\pi{\rm i} e_1}, {\rm e}^{2\pi{\rm i} e_2}\big),\\
 \rho(S) = \big({\rm e}^{2\pi{\rm i} (2e_1+e_2)} - {\rm e}^{2\pi{\rm i}(e_1+2e_2)}\big)^{-1}\!\twomat{1}{\sqrt{1-2\cos(2\pi (e_1-e_2))}\!}{\!\!\sqrt{1-2\cos(2\pi(e_1-e_2))}}{-1}\!.
 \end{gather*}
 In this basis, the space $M(\rho,L)$ of holomorphic modular forms for $\rho$ relative to $L$ is free of rank~$2$ over $M = \CC[E_4,E_6]$, with an explicit free-basis given by the forms:
 \begin{gather*}
 F =\eta^{2k_1}\left(\begin{matrix}{j^{-f_1} {}_2F_1\big(f_1,f_1+\tfrac 13; f_1-f_2+1;\tfrac{1728}{j}\big)}\\ {1728^{f_2-f_1}Xj^{-f_2}{}_2F_1\big(f_2,f_2+\tfrac 13; f_2-f_1+1;\tfrac{1728}{j}\big)}\end{matrix}\right),\qquad D_{k_1}F,
 \end{gather*}
 where $f_j = e_j-\tfrac{k_1}{12}$ and
 \begin{gather*}
 X = \frac{\Gamma(f_1-f_2)\Gamma(1-f_1)\Gamma\big(\tfrac 23 -f_1\big)}{\Gamma(f_2-f_1)\Gamma(1-f_2)\Gamma\big(\tfrac 23 - f_2\big)}\sqrt{-\frac{\sin(\pi f_1)\sin\big(\pi\big(f_1 + \tfrac 13\big)\big)}{\sin(\pi f_2)\sin\big(\pi\big(f_2 + \tfrac 13\big)\big)}}.
\end{gather*}
\end{thm}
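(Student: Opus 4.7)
The plan is to derive the theorem in three stages: first, determine the minimal weight $k_1$ by a free-module argument that also produces a degree-$2$ MLDE for the minimal-weight generator $F$; second, convert this MLDE to a Gauss hypergeometric equation in the Hauptmodul $K = 1728/j$ and read off the explicit hypergeometric expression for $F$; and third, compute $\rho(T)$ and $\rho(S)$ via classical connection formulas and fix the normalization $X$ to symmetrize $\rho(S)$.

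For the weight: by Theorem~\ref{FMT}, $M(\rho,L)$ is free of rank $2$ over $M = \CC[E_4,E_6]$; let $k_1 \leq k_2$ be the weights of a free basis $(F,G)$. Because $M_2 = 0$ and $\rho$ is irreducible, the second generator cannot lie in weight $>k_1+2$, forcing $k_2 = k_1+2$ with $G$ a nonzero scalar multiple of $D_{k_1}F$ (nonzero because the two $q$-exponents of $F$ differ by a non-integer, so $F$ is not proportional to $\eta^{2k_1}$ times a constant vector). Then $M_{k_1+4}(\rho,L) = \CC\cdot E_4F$, so $D^2F$ is a scalar multiple of $E_4F$, giving a monic MLDE $D^2F+\alpha E_4F = 0$. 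The indicial polynomial at the cusp is $(X-k_1/12)(X-(k_1+2)/12)+\alpha$, whose roots are the prescribed exponents $e_1,e_2$; hence $e_1+e_2 = (k_1+1)/6$, i.e., $k_1 = 6\Tr(L)-1$.

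For the hypergeometric form, write $F = \eta^{2k_1} G(K)$ with $K = 1728/j$. Since $\theta\eta = (E_2/24)\eta$, one finds $D_{k_1}F = \eta^{2k_1}(\theta K) G'(K)$, and the MLDE pulls back to a second-order linear ODE for $G$ with regular singularities at $K = 0, 1, \infty$. The local exponents at $K = 0$ are $f_j := e_j-k_1/12$ (satisfying $f_1+f_2 = 1/6$); at the order-$3$ point $K = \infty$ they differ by $1/3$; and at the order-$2$ point $K = 1$, by $1/2$. This Riemann scheme matches that of the Gauss hypergeometric equation with parameters $(\alpha,\beta,\gamma) = (f_1, f_1+\tfrac13, 1+f_1-f_2)$, and the two Frobenius solutions at $K = 0$ yield the two coordinates of $F$ as stated, the second being determined only up to the constant $X$ to be fixed below. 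The $T$-matrix is then read off directly from the leading behavior $q^{e_j}$ of each coordinate.

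For the $S$-matrix, the key geometric point is that $K = 1$ corresponds to the $S$-fixed point $\tau = i$, so the monodromy of the pulled-back ODE around $K = 1$ realizes $\rho(S)$ in the Frobenius basis. Using the classical connection formula expressing $_2F_1(a,b;c;K)$ in terms of the two solutions at $K = 1$ with local exponents $0$ and $c-a-b = 1/2$, and conjugating the local monodromy $\diagg(1,-1)$ at $K = 1$ by the resulting connection matrix, produces $\rho(S)$ as a matrix of $\Gamma$-function ratios. Choosing $X$ so that the $(1,2)$ and $(2,1)$ entries coincide symmetrizes $\rho(S)$ and determines $X$ (up to sign) as the displayed expression. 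The main obstacle will be the $\Gamma$-function bookkeeping: applying the reflection identity $\Gamma(z)\Gamma(1-z) = \pi/\sin(\pi z)$ together with $1 - 2\cos(2\pi x) = 4\sin^2(\pi x) - 1$, and tracking signs carefully (the radical $\sqrt{1-2\cos(2\pi(e_1-e_2))}$ may be purely imaginary), to collapse the asymmetric $\Gamma$-function entries into the compact form $\big({\rm e}^{2\pi{\rm i}(2e_1+e_2)}-{\rm e}^{2\pi{\rm i}(e_1+2e_2)}\big)^{-1}\sqrt{1-2\cos(2\pi(e_1-e_2))}$; the identity $\rho(S)^2 = \rho(-I)$ provides a useful internal consistency check.
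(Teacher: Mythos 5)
Your proposal is correct in outline and shares the paper's skeleton (free-module theorem, generators in weights $k_1$ and $k_1+2$, hence a monic second-order MLDE whose indicial roots at the cusp are $e_1,e_2$, giving $k_1=6\Tr(L)-1$; hypergeometric solutions on the $K$-line; rescaling of the second coordinate to symmetrize $\rho(S)$), but it takes a different route in the key monodromy step. The paper works with the variable $z=j/1728$ and parameters $(a,b,c)=(f_1,f_2,\tfrac23)$, imports from \cite{IKSY} the connection matrix $P$ between the Frobenius bases at $z=0$ (the order-$3$ elliptic point) and $z=\infty$ (the cusp), and obtains $\rho(S)=(-{\rm i})^{k_1}B_0B_\infty^{-1}$; you instead use the $K=0\to K=1$ connection formula and conjugate the local involution $\diagg(1,-1)$ at the order-$2$ point. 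Your route is conceptually more direct ($K=1$ is the image of the $S$-fixed point $\tau={\rm i}$, so the $S$-matrix is literally a local monodromy), at the cost of carrying out the connection computation yourself rather than quoting the tabulated $0$--$\infty$ data; the ensuing $\Gamma$-function reductions and the two-fold sign ambiguity are essentially the same in both treatments. You also rederive the minimal weight and the MLDE, which the paper simply quotes from \cite{FM4} and \cite{FM2}; your argument that $DF\neq 0$ (else $F=\eta^{2k_1}v$ for a constant vector $v$, forcing $v$ to span a $\rho$-invariant line and contradicting irreducibility) is sound and also excludes two generators in the same weight.

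Two pieces of bookkeeping to make explicit when you execute the plan, neither a genuine gap: first, the pulled-back ODE governs $G=\eta^{-2k_1}F$, whose multiplier system is $\rho\otimes\psi^{-k_1}$, so the monodromy at $K=1$ yields $\rho(S)$ only after reinstating the factor $\psi(S)^{k_1}=(-{\rm i})^{k_1}$ coming from $\eta(-1/\tau)=\sqrt{-{\rm i}\tau}\,\eta(\tau)$ (the paper carries this factor explicitly); second, a loop around $K=1$ based near the cusp is a priori only conjugate to $S$ in $\Gamma$, so you must fix the path (for instance along the imaginary axis through $\tau={\rm i}$) and the orientation, calibrating against the $q$-expansion determination of $\rho(T)$, exactly as the paper does when matching its $B_0$, $B_\infty$ with $\rho(R)^{\pm1}$, $\rho(T)^{\pm1}$. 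Both points fall under the sign-tracking you already flag, so the approach goes through.
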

\begin{rmk}
In Theorem~\ref{t:rank2} there are two choices of $S$-matrix, corresponding to two possible square roots in the expressions for~$\rho(S)$ and~$X$. The two $S$-matrices are related by conjugation with~$\stwomat {-1}0{\hphantom{-}0}1$.
\end{rmk}
\begin{proof}
Recall first that for the classical hypergeometric equation
\[
 y'' + \frac{(a+b+1)z-c}{z(z-1)}y' + \frac{ab}{z(z-1)}y = 0,
\]
in the basis of solutions $u_1 = {}_2F_1(a,b;c;z)$ and $u_2 = z^{1-c}{}_2F_1(a-c+1,b-c+1;2-c;z)$ near $z=0$ (in the notation of~\cite{IKSY} we have $u_1 = f_0(x;0)$ and $u_2 = f_0(x;1-c)$) the local monodromy matrices around $0$ and $\infty$ are
\begin{gather*}
A_0 = \twomat{1}{0}{0}{{\rm e}^{-2\pi{\rm i} c}}, \qquad A_\infty =P^{-1}\twomat{{\rm e}^{-2\pi{\rm i}a}}{0}{0}{{\rm e}^{-2\pi{\rm i}b}}P,
\end{gather*}
where
\begin{gather*}
 P = \twomat{\gamma(a,b,c)}{\gamma(b,a,c)}{\gamma(a',b',c')}{\gamma(b',a',c')},\qquad
 \gamma(x,y,z) = {\rm e}^{-\pi{\rm i} x}\frac{\Gamma(z)\Gamma(y-x)}{\Gamma(y)\Gamma(z-x)},\\
 a' = a-c+1,\qquad
 b' = b-c+1,\qquad
 c' = 2-c.
\end{gather*}
(cf.\ \cite[Theorem~4.6.2]{IKSY}). Our basis of modular forms will correspond to the basis of solutions
\begin{gather*}
 v_1 = z^{-a}{}_2F_1(a,a-c+1;a-b+1;1/z) = f_\infty(z;a),\\
 v_2 = z^{-b}{}_2F_1(b,b-c+1;b-a+1;1/z) = f_\infty(z,b)
\end{gather*}
with $z = j/1728$, and so to obtain a formula for the monodromy, we need to change basis above. But by \cite[Theorem~4.6.1]{IKSY}, the change of basis formula is
\[
 \twovec{u_1}{u_2} = P\twovec{v_1}{v_2}.
\]
Therefore, in the basis of solutions defined by $v_1$ and $v_2$, we have the monodromy matrices $B_0 = P^{-1}A_0P$ and $B_\infty = \diagg\big({\rm e}^{-2\pi{\rm i} a}, {\rm e}^{-2\pi{\rm i}b}\big)$ (notice that since we work with column vectors, our formulae differ slightly from~\cite{IKSY} by a transpose).

Now let $F \in M(\rho,L)$ be of minimal weight. It is known \cite{FM4} that the minimal weight is $k_1 = 6\Tr(L)-1$, and $G = \eta^{-2k_1}F$ has coordinates that are a basis of solutions to the differential equation
\[
(6-6K)\theta^2-(2K+1)\theta+6\alpha)f = 0.
\]
Here $K = 1728/j$, $\theta =Kd/dK$ and $\alpha = f_1f_2$ where we write $f_j = e_j - \tfrac{k_1}{12}$ (cf.\ \cite[equation~(11)]{FM2}). The Frobenius family of solutions to this family of differential equations is
\[
 G \df \left(\begin{matrix} \big(\tfrac{1728}j\big)^{f_1}{}_2F_1\big(f_1,f_1+\tfrac 13; f_1-f_2+1;\tfrac{1728}{j}\big)
 \vspace{1mm}\\ \big(\tfrac{1728}j\big)^{f_2}{}_2F_1\big(f_2,f_2+\tfrac 13; f_2-f_1+1;\tfrac{1728}{j}\big)\end{matrix}\right) = \twovec{f_\infty(j/1728;f_1)}{f_\infty(j/1728;f_2)},
\]
where we have the identities $a = f_1$, $b = f_2$ and $c = \tfrac{2}{3}$. From the expression for $G$ in terms of $f_\infty$, we find that $G$ has monodromy around $0$ and $\infty$ also given by $B_0$ and $B_\infty$. These will correspond, up to twisting by
$\psi$ as in (\ref{psidef}), to $\rho(R)^{\pm 1}$ and $\rho(T)^{\pm 1}$, where the sign in the exponent is determined by the orientations used here and in \cite{IKSY}. By comparing $q$-expansions we find that
\[
 G(\tau+1) = \twomat{{\rm e}^{2\pi{\rm i} f_1}}{0}{0}{{\rm e}^{2\pi{\rm i} f_2}}G(\tau) =B_\infty^{-1}G(\tau).
\]
That is, if we write $\rho' = \rho \otimes \psi^{-k_1}$, then $\rho'(T) = B_\infty^{-1}$. Since we have $S = \stwomat 0{-1}{1}{\hphantom{-}0}$ and $R = ST = \stwomat{0}{-1}{1}{\hphantom{-}1}$, we likewise obtain the formula
\[
 \rho(S) = (-{\rm i})^{k_1}B_0B_{\infty}^{-1} = (-{\rm i})^{k_1}P^{-1}\twomat{1}{0}{0}{{\rm e}^{2\pi{\rm i}/3}}P\twomat{{\rm e}^{2\pi{\rm i} a}}{0}{0}{{\rm e}^{2\pi{\rm i} b}}
\]
in the basis corresponding to the modular form $F = \eta^{2k_1}G$.

It turns out that $\rho(S)$ is rarely symmetric in this basis, and so our next goal is to change basis so that~$\rho(S)$ is symmetric. This amounts to rescaling the second coordinate of $F$. To determine the rescaling factor we shall simplify our expression for~$\rho(S)$. First note that, by the functional equation $\Gamma(z+1)=z\Gamma(z)$ and the reflection formula $\Gamma(1-z)\Gamma(z) = \frac{\pi}{\sin(\pi z)}$ for $z \not \in \ZZ$, one finds
\begin{gather*}
 \det P = {\rm e}^{-\pi{\rm i} (a+b-c+1)}\frac{1-c}{a-b}\frac{\sin(\pi b)\sin(\pi(c-a))-\sin(\pi a)\sin(\pi(c-b))}{\sin(\pi c)\sin(\pi(b-a))}
 = \frac{1}{3{\rm i}(a-b)},
\end{gather*}
where we have used the facts that $c = \tfrac 23$ and $a+b = f_1+f_2 = \tfrac 16$. Therefore we find that, with $\zeta = {\rm e}^{2\pi{\rm i}/3}$,
\begin{gather*}
\rho(S) = 3{\rm i}(a-b)(-{\rm i})^{k_1}\!\twomat{\gamma(b',a',c')}{\!\!\!-\gamma(b,a,c)}{-\gamma(a',b',c')}{\!\!\!\gamma(a,b,c)}\!\twomat{\gamma(a,b,c)}{\!\!\!\gamma(b,a,c)}{\zeta\gamma(a',b',c')} {\!\!\!\zeta\gamma(b',a',c')} \! \twomat{{\rm e}^{2\pi{\rm i} a}}{\!\!\!0}{0}{\!\!\!{\rm e}^{2\pi{\rm i} b}}\\
 = \twomat{\gamma(b',a',c')\gamma(a,b,c)-\zeta\gamma(b,a,c)\gamma(a',b',c')}
{\!\!\!\!(1-\zeta)\gamma(b,a,c)\gamma(b',a',c')}{(\zeta-1)\gamma(a,b,c)\gamma(a',b',c')}{\!\!\!\!\zeta\gamma(a,b,c)\gamma(b',a',c')-\gamma(a',b',c')\gamma(b,a,c)}\\
 \quad {}\times 3{\rm i}(a-b)(-i)^{k_1}\twomat{{\rm e}^{2\pi{\rm i} a}}{0}{0}{{\rm e}^{2\pi{\rm i} b}}.
\end{gather*}
Since $F$ transforms as $F(S\tau) = (-1/\tau)^{k_1}\rho(S)F(\tau)$, if we write $F' = (F_1,XF_2)^{\rm T}$, where $F = (F_1,F_2)^{\rm T}$ and $X \in \CC$, then
\[
 F'(S\tau) =\twomat 100X F(S\tau) = (-1/\tau)^{k_1}\twomat 100X\rho(S) \twomat 100{X^{-1}}F'(\tau).
\]
Since for any scalars $u_j$ we have
\[
\twomat 100X\twomat{u_1}{u_2}{u_3}{u_4} \twomat 100{X^{-1}} = \twomat{u_1}{u_2X^{-1}}{Xu_3}{u_4}
\]
the two values of $X$ making $F'$ transform under a symmetric $S$-matrix are determined by the equation
\[
 X^2 = -{\rm e}^{2\pi{\rm i}(b- a)}\frac{\gamma(b,a,c)\gamma(b',a',c')}{\gamma(a,b,c)\gamma(a',b',c')}.
\]
Once the symmetrization is performed, it is then a straightforward but tedious computation to check that the $S$-matrix and the $X$ term simplify as described in the theorem. Finally, note that $F'$ has a $q$-expansion of the form
\[
 F'(q) = \twovec{(1728)^{e_1}q^{e_1}+O\big(q^{e_1+1}\big)}{(1728)^{e_2}Xq^{e_2}+O\big(q^{e_2+1}\big)}.
\]
Thus, rescaling both coordinates by $(1728)^{-e_1}$ yields the result.
\end{proof}

We shall now explain how to apply Theorem~\ref{t:rank2} to the study of strongly regular VOAs~$V$ with exactly two nonisomorphic simple modules, such that $\Gamma$ acts irreducibly on~$\ch_V$. Let representatives for the modules be~$V$ itself and~$M$, and let~$M_0$ be the smallest nonzero graded piece of~$M$. Then the character vector $F_V$ of $V$ has a $q$-expansion of the form
\[
F_V(q) = q^{-\tfrac{c}{24}}\twovec{1+O(q)}{(\dim M_0)q^h + O\big(q^{h+1}\big)},
\]
where $c$ and $h$ are the central charge and conformal weight of $V$ and $M$, respectively. It follows that the underlying monodromy representation $\rho$ has exponents $e_1 = -\tfrac{c}{24}$ and $e_2 = h-\tfrac{c}{24}$. The minimal weight for this representation and choice of exponents is thus:
\[
 k_1 = 6(e_1+e_2)-1 = 6h-\tfrac{c}{2}-1.
\]
Note also that in the notation of Theorem~\ref{t:rank2},
\begin{gather*}
 f_1 = \tfrac{1}{12}-\tfrac{h}{2},\qquad
 f_2 = \tfrac{1}{12}+\tfrac{h}{2}.
\end{gather*}

The formula above demonstrates that the minimal weight $k_1$ is exactly the quantity $-\ell$ studied in \cite{GHM, TG, HM, kaidi2021, MMS2} and elsewhere. In this light, the extremality condition of \cite{TG} says that $-4 \leq k_1 \leq 0$. From the perspective of vector-valued modular forms, extremality in this sense corresponds precisely to the cases where $\dim M_0(\rho,L) = 1$, by Theorem~\ref{t:rank2}. In this way, Theorem~\ref{t:rank2} gives exact formulas for the character vectors $F_V$ when $V$ is extremal with two simple modules and an irreducible action of $\Gamma$ on $\ch_V$. Table~\ref{tablerank2} on page~\pageref{tablerank2} contains the result.

\begin{table}\centering
 \begin{tabular}{|c|c|}
 \hline
 $k_1$ & $F_V$ \\ \hline
 &\\
 $0$ & $\left(\begin{matrix}{j^{\tfrac{h}{2}-\tfrac{1}{12}} {}_2F_1\big(\tfrac{1}{12}-\tfrac{h}{2},\tfrac{5}{12}-\tfrac{h}{2}; 1-h;\tfrac{1728}{j}\big)}\\{1728^{h}Xj^{-\tfrac{h}{2}-\tfrac{1}{12}}{}_2F_1\big(\tfrac{1}{12}+\tfrac{h}{2},\tfrac{5}{12}+\tfrac{h}{2}; 1+h;\tfrac{1728}{j}\big)}\end{matrix}\right)$\\
 &\\\hline
 &\\
 $-2$ & $\frac{12}{1-6h}\eta^{-4}D_0\left(\begin{matrix}{j^{\tfrac{h}{2}-\tfrac{1}{12}} {}_2F_1\big(\tfrac{1}{12}-\tfrac{h}{2},\tfrac{5}{12}-\tfrac{h}{2}; 1-h;\tfrac{1728}{j}\big)}\\{1728^{h}Xj^{-\tfrac{h}{2}-\tfrac{1}{12}}{}_2F_1\big(\tfrac{1}{12}+\tfrac{h}{2},\tfrac{5}{12}+\tfrac{h}{2}; 1+h;\tfrac{1728}{j}\big)}\end{matrix}\right)$\\
 &\\\hline
 &\\
 $-4$ &$E_4\eta^{-8}\left(\begin{matrix}{j^{\tfrac{h}{2}-\tfrac{1}{12}} {}_2F_1\big(\tfrac{1}{12}-\tfrac{h}{2},\tfrac{5}{12}-\tfrac{h}{2}; 1-h;\tfrac{1728}{j}\big)}\\{1728^{h}Xj^{-\tfrac{h}{2}-\tfrac{1}{12}}{}_2F_1\big(\tfrac{1}{12}+\tfrac{h}{2},\tfrac{5}{12}+\tfrac{h}{2}; 1+h;\tfrac{1728}{j}\big)}\end{matrix}\right)$\\
&\\\hline
\end{tabular}
\caption{Extremal character vectors in rank $2$. The sign of $X$ from Theorem~\ref{t:rank2} is chosen to ensure that the Fourier coefficients are positive.}\label{tablerank2}
\end{table}

Notice that in the extremal cases we can deduce an analytic formula for $\dim_{\CC} M_0$, where recall that here $M_0$ denotes the smallest graded piece of the nonadjoint module of~$V$.\footnote{In particular, here $\dim_\CC M_0$ should not be confused with $\dim_{\CC} M_0(\rho,L)$!}
\begin{cor} \label{c:dimM0}
 Let $V$ be a strongly regular VOA with exactly two isoclasses of irreducible $V$-modules and representatives $V$ and $M$ say, and assume that $\Gamma$ acts irreducibly on $\ch_V$. Let $c$ be the central charge of
 $V$, let $h$ be the conformal weight of $M$, and assume that $V$ is extremal in the sense above. Then
 \[
 \dim_{\CC} M_0 = \begin{cases}
 1728^hX, & k_1 = 0 \textrm{ or } -4,\\
 \tfrac{1+6h}{1-6h}1728^hX,& k_1 = -2,
\end{cases}
\]
where
\begin{gather*}
 X = 4^{-h}\frac{\Gamma(-h)\Gamma\big(\tfrac{5}{6}+h\big)}{\Gamma(h)\Gamma\big(\tfrac{5}{6}-h\big)}\sqrt{\frac{\sin\big(\pi\big(h-\tfrac{1}{6}\big)\big)}{\sin\big(\pi\big(h+\tfrac{1}{6}\big)\big)}}
\end{gather*}
and the sign of $X$ is chosen to ensure that $\dim_{\CC} M_0$ is positive. If $h > -\tfrac{5}{6}$ then this can be rewritten as
\[
 X = 4^{-h}{}_2F_1(-2h,-5/6;-h;1)\sqrt{\frac{\sin(\pi(h-\tfrac{1}{6}))}{\sin(\pi(h+\tfrac{1}{6}))}}.
\]
\end{cor}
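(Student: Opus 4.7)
The plan is to read off $\dim_{\CC} M_0$ as the leading Fourier coefficient of the second coordinate of $F_V$ in each of the three extremal cases, using the explicit formulas from Table~\ref{tablerank2} provided by Theorem~\ref{t:rank2}, and then to simplify the resulting expression for $X$ via standard Gamma-function identities. A preliminary observation is that the extremality constraint $k_1 = 6h - c/2 - 1$ fixes $c$ in terms of $h$ in each case, and that the shifted exponents $f_j \df e_j - k_1/12$ reduce to $f_1 = \tfrac{1}{12}-\tfrac{h}{2}$ and $f_2 = \tfrac{1}{12}+\tfrac{h}{2}$ uniformly in $k_1$. In particular, the second coordinate of the unnormalized hypergeometric vector $G \df \eta^{-2k_1}F_V$ from the proof of Theorem~\ref{t:rank2} has leading Fourier expansion $1728^{h}\,X\,q^{f_2}(1+O(q))$, since $j^{-f_2}$ starts as $q^{f_2}$ and the hypergeometric factor tends to $1$ at the cusp.

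For $k_1 = 0$ this immediately gives $\dim_{\CC} M_0 = 1728^h X$. For $k_1 = -4$ the extra factor $E_4\eta^{-8}$ contributes a leading $q^{-1/3}$, and once one verifies that $h-c/24 = h/2 - 1/4$ one again obtains $\dim_{\CC} M_0 = 1728^h X$. For $k_1 = -2$ the modular derivative $D_0 = q\,d/dq$ acts on the leading term $1728^h X q^{f_2}$ by multiplication by $f_2 = (1+6h)/12$, and combining with the overall scalar $12/(1-6h)$ and the leading $q^{-1/6}$ of $\eta^{-4}$ produces $\dim_{\CC}M_0 = \tfrac{1+6h}{1-6h}\cdot 1728^h X$, which accounts for the case-by-case formulas.

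The main task is the simplification of $X$, where the Gamma quotient takes the form $\Gamma(-h)\Gamma(\tfrac{11}{12}+\tfrac{h}{2})\Gamma(\tfrac{7}{12}+\tfrac{h}{2})/[\Gamma(h)\Gamma(\tfrac{11}{12}-\tfrac{h}{2})\Gamma(\tfrac{7}{12}-\tfrac{h}{2})]$. Applying Legendre's duplication formula $\Gamma(z)\Gamma(z+\tfrac{1}{2}) = 2^{1-2z}\sqrt{\pi}\,\Gamma(2z)$ to the pairings $(\tfrac{5}{12}\pm\tfrac{h}{2},\tfrac{11}{12}\pm\tfrac{h}{2})$ and $(\tfrac{1}{12}\pm\tfrac{h}{2},\tfrac{7}{12}\pm\tfrac{h}{2})$ yields the identity
\begin{gather*}
\frac{\Gamma(\tfrac{11}{12}+\tfrac{h}{2})\Gamma(\tfrac{7}{12}+\tfrac{h}{2})}{\Gamma(\tfrac{11}{12}-\tfrac{h}{2})\Gamma(\tfrac{7}{12}-\tfrac{h}{2})} = 4^{-2h}\,\frac{\Gamma(\tfrac{5}{6}+h)\Gamma(\tfrac{1}{6}+h)}{\Gamma(\tfrac{5}{6}-h)\Gamma(\tfrac{1}{6}-h)}\cdot\frac{\Gamma(\tfrac{5}{12}-\tfrac{h}{2})\Gamma(\tfrac{1}{12}-\tfrac{h}{2})}{\Gamma(\tfrac{5}{12}+\tfrac{h}{2})\Gamma(\tfrac{1}{12}+\tfrac{h}{2})}.
\end{gather*}
The auxiliary Gamma ratio can then be eliminated by squaring $X$ and using the reflection formula $\Gamma(z)\Gamma(1-z) = \pi/\sin(\pi z)$ at $\tfrac{1}{12}\pm\tfrac{h}{2}$ and $\tfrac{5}{12}\pm\tfrac{h}{2}$, which produces a sine ratio that cancels exactly against the sine factor already present in Theorem~\ref{t:rank2}. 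A final application of reflection in the form $\Gamma(\tfrac{1}{6}+h)\Gamma(\tfrac{5}{6}-h)/[\Gamma(\tfrac{1}{6}-h)\Gamma(\tfrac{5}{6}+h)] = \sin(\pi(\tfrac{1}{6}-h))/\sin(\pi(\tfrac{1}{6}+h)) = -\sin(\pi(h-\tfrac{1}{6}))/\sin(\pi(h+\tfrac{1}{6}))$ collapses the expression to $4^{-h}\Gamma(\tfrac{5}{6}+h)/\Gamma(\tfrac{5}{6}-h)\cdot\sqrt{\sin(\pi(h-1/6))/\sin(\pi(h+1/6))}$, with the sign of the square root fixed by positivity of $\dim_{\CC} M_0$.

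The equivalent hypergeometric reformulation valid for $h > -\tfrac{5}{6}$ is an immediate application of Gauss's summation ${}_2F_1(a,b;c;1) = \Gamma(c)\Gamma(c-a-b)/[\Gamma(c-a)\Gamma(c-b)]$ with $a = -2h$, $b = -\tfrac{5}{6}$, $c = -h$, whose convergence condition $\Re(c-a-b) = h+\tfrac{5}{6} > 0$ is precisely the stated hypothesis. The main obstacle is the Gamma-function simplification in the preceding paragraph: the $\tfrac{1}{3}$-shifts in the arguments interfere with a direct application of duplication, and the key trick is to square $X$ first so that two invocations of duplication combined with reflection become tractable.
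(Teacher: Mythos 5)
Your proposal is correct and follows essentially the same route as the paper: read off $\dim_{\CC}M_0$ as the leading coefficient of the second coordinate in the three rows of Table~\ref{tablerank2} (with the factor $\tfrac{1+6h}{1-6h}$ coming from $D_0$ acting on $q^{f_2}$), then simplify $X$ from Theorem~\ref{t:rank2} using Euler reflection and Legendre duplication, and finish with Gauss's evaluation of ${}_2F_1$ at $1$. The only cosmetic difference is that you square $X$ to cancel the sine factors outright, whereas the paper keeps the square root and uses $\sin\big(z+\tfrac{\pi}{2}\big)=\cos z$ to collapse the trigonometric ratio; both are routine and equivalent manipulations.
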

\begin{proof} This follows immediately from Theorem~\ref{t:rank2} and Table~\ref{tablerank2}. The $X$ term is simplified as
\begin{align*}
 X &= \frac{\Gamma(-h)\Gamma\big(\tfrac{11}{12}+\tfrac{h}{2}\big)\Gamma\big(\tfrac{5}{12}+\tfrac{h}{2}\big)}{\Gamma(h)\Gamma\big(\tfrac{11}{12}-\tfrac{h}{2}\big) \Gamma\big(\tfrac{5}{12}-\tfrac{h}{2}\big)}\sqrt{-\frac{\sin\big(\pi\big(\tfrac{1}{12}-\tfrac{h}{2}\big)\big)\sin\big(\pi(\tfrac{7}{12}-\tfrac{h}{2})\big)} {\sin\big(\pi\big(\tfrac{1}{12}+\tfrac{h}{2}\big)\big)\sin\big(\pi\big(\tfrac{7}{12}+\tfrac{h}{2}\big)\big)}}\\
 &=4^{-h}\frac{\Gamma(-h)\Gamma\big(\tfrac{5}{6}+h\big)}{\Gamma(h)\Gamma\big(\tfrac{5}{6}-h\big)}\sqrt{-\frac{\sin\big(\pi\big(\tfrac{1}{12}-\tfrac{h}{2}\big)\big) \sin\big(\pi\big(\tfrac{1}{12}-\tfrac{h}{2}\big)+\tfrac{\pi}{2})}{\sin\big(\pi\big(\tfrac{1}{12}+\tfrac{h}{2}\big)\big)\sin\big(\pi\big(\tfrac{1}{12}+\tfrac{h}{2}\big)+\tfrac{\pi}{2}\big)}},
\end{align*}
where in the second line we have made use of Euler's relfection formula for $\Gamma$, and in the third we have used Legendre's duplication formula. Since $\sin(z + \tfrac{\pi}{2}) = \cos(z)$, the trigonometric factor then simplifies as claimed. Finally, when $h > -\tfrac{5}{6}$, the remaining $\Gamma$-factors can be reexpressed using Gauss's special value identity
\[
_2F_1(\alpha,\beta;\gamma;1) = \frac{\Gamma(\gamma)\Gamma(\gamma-\alpha-\beta)}{\Gamma(\gamma-\alpha)\Gamma(\gamma-\beta)},
\]
which is valid whenever $\Re(\gamma) > \Re(\alpha+\beta)$. Taking $\alpha = -2h$, $\beta = -5/6$, $\gamma = -h$ yields the last result.
\end{proof}

\begin{ex}
 In \cite{TG} all extremal VOAs with two simple modules were computed (see also~\cite{MNS}) and in all cases one has $h > -5/6$. Consider for example the previously unknown character vector of \cite{TG} corresponding to the values $c=33$, $h = \tfrac{9}{4}$ and $k_1 = -4$. Using a different method, the authors of \cite{TG} found that $\dim M_0 = 565760$. Using the material discussed above, this computation corresponds to the identity
 \[
-2^{9}3^{\tfrac{27}{4}}{}_2F_1(-9/2,-5/6;-9/4;1)\sqrt{2-\sqrt{3}} = 565760.
\]
Note that we needed to take the negative squareroot in this computation. After some manipulations this can be shown to be equivalent to the simpler identity
\[
\frac{\Gamma(3/4)\Gamma(5/12)}{\Gamma(1/4)\Gamma(11/12)}=\sqrt{2\sqrt{3}-3},
\]
which can be deduced from first principles using the techniques found in \cite{Vidunas}.
\end{ex}

\begin{ex}
 Corollary \ref{c:dimM0} can also be used to eliminate some possibilities from \cite{MNS}. In this paper, some cases with $c = -6, -8$ and $-10$ and $k_1=0$ proved to be particularly awkward to eliminate. In these cases we have $h = -\tfrac{1}{3},-\tfrac{1}{2},-\tfrac{2}{3}$ respectively. Using Corollary~\ref{c:dimM0} one can evaluate the formula for $\dim_{\CC}M_0$ at these points, and the values turn out to be nonintegral in these cases. Thus, there can be no corresponding strongly regular VOA realizing the corresponding character vectors. Note that these cases were already eliminated in \cite{TG} by a recursive computation, rather than by using exact formulas as above.
\end{ex}

The next most natural cases to consider in the classification of strongly regular VOAs with two simple modules are the cases when $k_1 = -6,-8$ or $-10$. In all of these cases the character vector lives in a two-dimensional vector space. For example, when $k_1 = -6$ there is a formula of the form $F_V = \alpha E_6F+\beta E_4DF$ where
\[
F = \eta^{-6}\left(\begin{matrix}{j^{\tfrac{h}{2}-1} {}_2F_1\big(1-\tfrac h2,\tfrac 43-\tfrac{h}{2}; 1-h;\tfrac{1728}{j}\big)}\\{1728^{h}Xj^{-\tfrac{h}{2}-1}{}_2F_1\big(1+\tfrac{h}{2},\tfrac 43+\tfrac{h}{2}; 1+h;\tfrac{1728}{j}\big)}\end{matrix}\right)
\]
and $\alpha$, $\beta$ are unknown scalars. Since the $q$-expansion of the first coordinate must have coefficient equal to $1$, one can write $\beta$ as a linear function of $\alpha$. Thus, $F_V$ lives in a two-parameter family determined by the invariants $(h,\alpha)$. Varying $h$ changes the monodromy, while varying~$\alpha$ corresponds to an isomonodromic deformation~\cite{Sabbah}. The reason that the extremal cases are so straightforward is due to the fact that one does not have to worry about isomonodromic deformations of the character vector.

We expect that the problem of classifying strongly regular VOAs with two modules such that $-10 \leq k_1 \leq -6$ will be closer in difficulty to the problem of classifying the Schellekens list of holomorphic VOAs with central charge $c = 24$, rather than the easier classification of the extremal cases discussed in~\cite{TG}. For example, suppose that one found parameters so that~$\alpha E_6F$ corresponds to a strongly regular VOA with rapidly growing graded pieces, while $\beta E_4DF$ corresponds to a discrete series Virasoro VOA with the same underlying monodromy representation. The coefficients of the character of such a Virasoro VOA are relatively slowly growing (see Example~\ref{ex:hardhexagon}), so that one could reasonably expect to find that all of the linear combinations
\[
H_m = m\alpha E_6F + (1-m)\beta E_4DF
\]
for integers $m\geq 1$ have positive integer Fourier coefficients. This would be an isomonodromic family of conformal modular forms similar to the Schellekens list family $j+m$, and we expect similarly that most of the specializations $H_m$ would not be realized by strongly regular VOAs. More generally, as $\dim M_0(\rho,L)$ grows, the corresponding classification problem for character vectors in $M_0(\rho,L)$ will obviously become more intractable.

\section{Higher ranks}\label{s:highrank}

\subsection{Rank 3}\label{s:rank3}
Hypergeometric series can be used to describe character vectors of strongly regular VOAs with two simple modules and, as suggested in~\cite{MasonFive}, one can use the results of~\cite{FM2} to tell a similar story for VOAs with three simple modules. In this case there is a free basis for the relevant module of modular forms $M(\rho,L)$ of the form $\big\{F, DF,D^2F\big\}$, where the minimal weight form $F$ can be described explicitly in terms of generalized hypergeometric series~${}_3F_2$. The weight of $F$ is $k_1 = 4\Tr(L)-3$. The \emph{extremal} cases where $M_0(\rho,L)$ are one-dimensional are given by $k_1=0$ and $k_1=-2$.

In \cite{FM6} we classified character vectors, and the corresponding strongly regular VOAs, for such~$\rho$ in cases where $k_1 = 0$. This means that $F$ satisfies a modular linear differential equation of weight $0$ of the form
\[
D^3F + aE_4D^2F+bE_6F = 0,
\]
an equation that becomes generalized hypergeometric when expressed on the $K$-line \cite{FM2}.

To classify the conformal modular forms arising from the corresponding Frobenius family, we examined the first nontrivial Fourier coefficient of $F$ expressed as a vector of algebraic functions of the conformal weights $h_1$ and $h_2$. Since these coefficients are obtained from the Frobenius method, their divisors turn out to have many linear factors coming from their denominators, cf.\ Lemma~\ref{l:frobfam}(b). Since we only needed to consider series with positive Fourier coefficients, this allowed us to reduce the search for conformal specializations of the Frobenius family $F$ to two relatively small regions of the $(h_1,h_2)$-plane bounded by two horizontal and two diagonal lines occuring in the divisors. Once this reduction was performed, we then used results on the arithmetic of $_3F_2$ to determine exactly what specializations of the Frobenius family~$F$ had a~first coordinate with nonnegative integer coefficients. This relatively minimal amount of input already reduced us to considering one infinite family of examples, plus a finite list of additional possible conformal specializations arising from the Frobenius family. The last step in our classification strategy was to compute and symmetrize a finite number of $S$-matrices, and then check which of the integral specializations remained integral after changing basis to make~$\rho(S)$ symmetric.

The result of all this was an infinite family of conformal modular forms, in addition to a finite list of additional examples. The data is contained in Figure~\ref{f:rank3}, where we plot the examples as points in the $(h_1,h_2)$-plane. We showed that the infinite family was realized by~$B_{\ell,1}$, and the exceptional examples contained~$A_{1,2}$, $\Vir(c_{3,4})$ and~$\Vir(c_{2,7})$. In addition to these known examples, we found~$11$ more conformal modular forms that seemed to live in a family containing~$E_{8,2}$ and the Baby Monster theory, but such that the $9$ other entries in the family did not correspond to known examples. In Figure~\ref{f:rank3} this new family corresponds to the blue dots at height~$3/2$. We called this family the $U$-series. In \cite[Section~9]{FM6} we proved that all but three of these exceptions can be realized as commutants inside the Moonshine module $V^{\natural}$.

\begin{figure}[t]\centering
 \includegraphics[scale=0.7]{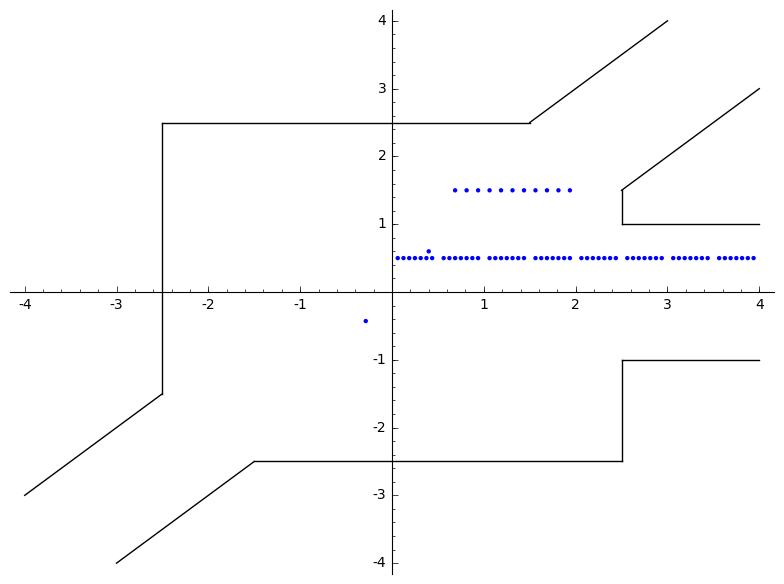}
 \caption{Conformal modular forms in rank $3$ in the extremal case when $k_1 = 0$. The dots at height $h_2 = 1/2$ correspond to $A_{1,2}$ and $B_{\ell,1}$. The dots at height $h_2 = 3/2$ are the $U$-series. The exceptional dots are $\Vir(c_{2,7})$ and $\Vir(c_{3,4})$. The black boundary lines correspond to the positivity conditions arising from the analysis of the first nontrivial $q$-series coefficient of the Frobenius family.} \label{f:rank3}
\end{figure}

\subsection{Rank 4 and higher}\label{s:rank4}
Once one begins to consider VOAs with four or more simple modules, the discussion of characters becomes more complicated. One reason for this is that there are two possibilities for the structure of the module of modular forms associated to an irreducible representation $\rho \colon \Gamma \to \GL_4(\CC)$ and choice of exponents~$L$~-- see~\cite{FM5} for a detailed discussion. The \emph{cyclic case} corresponds to~$M(\rho,L)$ being generated as a $D$-module by a single form $F$ of minimal weight $k_1 = 3\Tr(L)-3$, so that a~free-basis over $M$ is $F$, $DF$, $D^2F$ and $D^3F$. The \emph{noncyclic case} corresponds to modules $M(\rho,L)$ with one generator in weight $k_1 = 3\Tr(L)-2$, two in weight $k_1+2$, and one in weight $k_1+4$. In~\cite{FM5} we studied vvmfs associated to tensor products, symmetric powers and inductions of two-dimensional representations. One could analyze the corresponding Frobenius families in an attempt to classify potential characters of VOAs. In this section we will carry this out in one of the simplest examples in rank~$4$, by inducing from a~subgroup of index $4$. See~\cite{Bae} for a~similar computation with $\Gamma_0(2)$ instead of~$\Gamma_0(3)$.

While $\Gamma$ has three distinct conjugacy classes of subgroups of index $4$, only the conjugacy class of $\Gamma_0(3)$ has infinitely many one-dimensional representations that give rise to a nonconstant Frobenius family. We study this family next.
\begin{rmk}The cases of $\Gamma_0(p)$ for $p=5$ and $13$ could be treated in a similar fashion, yielding families of rank $6$ and $14$, respectively.
\end{rmk}

\subsubsection[A family associated to Gamma\_0(3)]{A family associated to $\boldsymbol{\Gamma_0(3)}$}

We begin by classifying the irreducible congruence representations of $\Gamma_0(3)$. Let $U = S^{-1}T^{-1}S$ and set $A=U^3T^{-1}$, so that $\Gamma_0(3)=\big\langle T, U^3\big\rangle=\langle T, A\rangle = \big\langle U^3, A\big\rangle=\Gamma(3)\langle A\rangle$. Consider the subgroup $\big\langle U^3, AU^3A^{-1}, A^2U^3A^{-2}\big\rangle$, which is normalized by $A$ and contained in $\Gamma(3)$. It follows easily that in fact
\[\Gamma(3)=\big\langle U^3, AU^3A^{-1}, A^2U^3A^{-2}\big\rangle.\]
Let $\bar{\Gamma}(3)$ denote the image of $\Gamma(3)$ in $\PSL_2(\ZZ)$. Then the group $\bar{\Gamma}(3)$ is free of rank $3$, and therefore it is freely generated by the~$3$ indicated elements (after barring them). Now $\bar{\Gamma}(3)/\bar{\Gamma}(3)'\cong \ZZ^3$. Putting these facts together shows that $\bar{\Gamma}_0(3)/\bar{\Gamma}(3)' \cong \ZZ \wr\ZZ_3$.

\begin{lem}\label{lemcomm} We have
\begin{gather*}
\bar{\Gamma}_0(3)/\bar{\Gamma}_0(3)' \cong\ZZ\times \ZZ_3
\end{gather*}
and the two factors in the quotient are generated by the images of $U^3$ and $A$ respectively.
\end{lem}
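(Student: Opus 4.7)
The plan is to leverage the structural identification $\bar{\Gamma}_0(3)/\bar{\Gamma}(3)' \cong \ZZ \wr \ZZ_3$ already established in the excerpt, and to obtain the abelianization of $\bar{\Gamma}_0(3)$ from it by one further quotient. Since $\bar{\Gamma}(3)' \subseteq \bar{\Gamma}_0(3)'$, there is a natural isomorphism
\[
\bar{\Gamma}_0(3)/\bar{\Gamma}_0(3)' \;\cong\; W/W',
\]
where $W \df \bar{\Gamma}_0(3)/\bar{\Gamma}(3)'$ and $W' \df \bar{\Gamma}_0(3)'/\bar{\Gamma}(3)'$ is the commutator subgroup of $W$. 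The problem is thus reduced to abelianizing the explicit wreath product $W = N \rtimes C$, where $N \cong \ZZ^3$ has free basis $\bar U^3,\overline{AU^3A^{-1}},\overline{A^2U^3A^{-2}}$ and $C = \langle \bar A\rangle \cong \ZZ_3$ acts on $N$ by cyclic permutation of this basis.

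Next I would compute $W'$ directly. Since $N$ is abelian, the only non-trivial commutators in $W$ arise from mixing $\bar A$ with an element $n \in N$, and modulo $N$ these read $[\bar A,n] = \bar A n\bar A^{-1}n^{-1} = (\bar A\cdot n)n^{-1}$. In additive notation on $N$ this is $(\bar A - I)n$, so $W'$ contains the image of the operator $\bar A - I$ on $\ZZ^3$. Since $\bar A$ acts by cyclic permutation, the image of $\bar A-I$ equals $\ker\epsilon$, where $\epsilon\colon\ZZ^3\to\ZZ$ is the augmentation $(a,b,c)\mapsto a+b+c$. Conversely $W' \subseteq N$ because $W/N \cong \ZZ_3$ is already abelian, and no further relations are forced; thus $W' = \ker\epsilon$.

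Consequently $N/W' \cong \ZZ$ via $\epsilon$, and the induced action of $C$ on this quotient is trivial (the three basis elements of $N$ all have the same image under $\epsilon$). This gives the short exact sequence
\[
0 \longrightarrow \ZZ \longrightarrow W/W' \longrightarrow \ZZ_3 \longrightarrow 0
\]
with trivial action, so $W/W' \cong \ZZ \times \ZZ_3$. Tracing generators: the $\ZZ$-factor is generated by the common image of $\bar U^3$ (and its two $A$-conjugates, which coincide modulo $W'$), while the $\ZZ_3$-factor is generated by the image of $\bar A$, as claimed.

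The only subtle point, and the main thing to verify carefully, is that the sequence above actually splits (equivalently, that $\bar A^3 \in W'$ rather than contributing a nontrivial extension). This is however automatic from the already-established isomorphism $W \cong \ZZ \wr \ZZ_3$, which encodes precisely that $\bar A^3 \in \bar\Gamma(3)' \subseteq W'$; the triviality of the $\ZZ_3$-action on $N/W'$ then rules out any nonsplit extension, completing the proof.
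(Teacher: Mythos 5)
Your proof is correct and follows essentially the same route as the paper: both reduce to abelianizing $W\cong\ZZ\wr\ZZ_3$ realized as $\ZZ^3\rtimes C_3$ with $C_3$ cyclically permuting coordinates, and both identify the commutator subgroup with the augmentation kernel $\{(a,b,c)\in\ZZ^3 : a+b+c=0\}$, yielding $\ZZ\times\ZZ_3$. One small remark on your ``subtle point'': the abstract isomorphism $W\cong\ZZ\wr\ZZ_3$ does not by itself force the particular element $\bar A$ to have order $3$ in $W$ (a priori $A^3$ could map to a nonzero $A$-fixed vector $(a,a,a)\in\ZZ^3$, which would spoil the claim that the image of $A$ generates the $\ZZ_3$ factor), but this holds for a stronger reason: $A=U^3T^{-1}=\stwomat 1{-1}3{-2}$ has trace $-1$ and satisfies $A^3=I$ in $\SL_2(\ZZ)$, so the splitting and the generator statement are immediate.
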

\begin{proof}
The group $\ZZ \wr C_3$ may be realized as $\ZZ^3$ extended by an order $3$ automorphism that cyclically permutes coordinates. Then we calculate that the commutator subgroup consists of $(a, b, c)\in\ZZ^3$ with coordinate sum $a+b+c=0$. Therefore the commutator quotient of $\ZZ \wr\ZZ_3$ is isomorphic to $\ZZ\times \ZZ_3$. The proof follows.
\end{proof}

After Lemma \ref{lemcomm}, the characters $\chi\colon \bar{\Gamma}_0(3)\longrightarrow \CC^\times$ of \emph{finite order} are indexed by pairs $(\lambda, \epsilon)$ where $\lambda$ is a root of unity, and $\epsilon\in\{0, \pm 1\}$, and we have
\begin{gather*}
\chi(U^3) =\lambda, \qquad \chi(A) =\omega^{\epsilon}.
\end{gather*}
For any of the characters $\chi=\chi_{\lambda, \epsilon}$ described in \eqref{chidef} we set
\[
\rho\df \Ind_{\bar{\Gamma}_0(3)}^{\bar{\Gamma}} \chi.
\]
These are the $4$-dimensional monomial representations of interest. We need a good left transversal for $\bar{\Gamma}_0(3)$ in~$\bar{\Gamma}$. We take left coset representatives to be $I$, $\bar{U}$, $\bar{U}^2$, $\bar{S}$, and choose an ordered basis for the linear space that furnishes $\rho$ to correspond to the left cosets of $\bar{\Gamma}_0(3)$ where we use these group elements in the indicated order. Name this ordered basis $(v_1, v_2, v_3, v_4)$. Then we obtain
\begin{gather*}
\rho(\bar{U})=\left(\begin{matrix}0 & 0 & \lambda & 0 \\ 1 & 0 & 0 & 0 \\0 & 1 & 0 & 0 \\0 & 0 & 0 & \bar{\lambda}\omega^{\epsilon} \end{matrix}\right),\qquad
\rho(\bar{S})=\left(\begin{matrix}0 & 0 & 0 & 1 \\0 & 0 & \omega^{\epsilon} & 0 \\0 & \bar{\omega}^{\epsilon} & 0 & 0 \\ 1 & 0 & 0 & 0\end{matrix}\right),\\
\rho(\bar{A})= \left(\begin{matrix}\omega^{\epsilon} & 0 & 0 & 0 \\0 & 0 & \lambda\bar{\omega}^{\epsilon} & 0 \\0 & 0 & 0 & \lambda\bar{\omega}^{\epsilon} \\
0 & \bar{\lambda}^2\bar{\omega}^{\epsilon} & 0 & 0\end{matrix}\right).
\end{gather*}
We find that
\begin{gather*}
\rho\big(\bar{U}^3\big)= \diagg\big(\lambda, \lambda, \lambda, \bar{\lambda}^3\big), \qquad
\rho(\bar{T}) = \rho\big(\bar{A}^{-1}\bar{U}^3\big)=\left(\begin{matrix}\lambda\bar{\omega}^{\epsilon} & 0 & 0 & 0 \\0 & 0 & 0 & \bar{\lambda}\omega^{\epsilon} \\ 0 & \omega^{\epsilon} & 0 & 0 \\0 & 0 & \omega^{\epsilon} & 0
\end{matrix}\right).
\end{gather*}

\begin{lem}\label{lemsolvable} $\im\rho$ is described by a short exact sequence
\begin{gather*}
1\rightarrow K \rightarrow \im\rho\rightarrow A_4 \rightarrow 1,
\end{gather*}
where $K\df \rho(\bar{\Gamma}(3))$ is an abelian normal subgroup.
\end{lem}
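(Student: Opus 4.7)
The plan is to exploit the reduction-mod-$3$ isomorphism $\bar{\Gamma}/\bar{\Gamma}(3) \cong \PSL_2(\FF_3) \cong A_4$. Since $\bar{\Gamma}(3)$ is normal in $\bar{\Gamma}$, the subgroup $K=\rho(\bar{\Gamma}(3))$ is automatically normal in $\im\rho$, and $\rho$ descends to a surjection $\bar{\Gamma}/\bar{\Gamma}(3) \twoheadrightarrow \im\rho/K$. The lemma therefore reduces to two claims: (i) $K$ is abelian, and (ii) the surjection just mentioned is injective.

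For (i) I would invoke Mackey's restriction formula. Because $\bar{\Gamma}(3)$ is a normal subgroup of $\bar{\Gamma}$ contained in $\bar{\Gamma}_0(3)$, the restriction decomposes as $\rho|_{\bar{\Gamma}(3)} = \bigoplus_{i=1}^{4} \chi^{g_i}|_{\bar{\Gamma}(3)}$, where $g_i$ ranges over the coset representatives $\{I,\bar{U},\bar{U}^2,\bar{S}\}$ used to define $\rho$. Each $\chi^{g_i}|_{\bar{\Gamma}(3)}$ is a one-dimensional character of $\bar{\Gamma}(3)$, so in the chosen basis $(v_1,\dots,v_4)$ every element of $K$ is diagonal and $K$ is abelian. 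As a concrete sanity check one may simply verify from the matrices already displayed that the three generators $\rho(\bar{U}^3)$, $\rho(\bar{A}\bar{U}^3\bar{A}^{-1})$, $\rho(\bar{A}^2\bar{U}^3\bar{A}^{-2})$ of $K$ are all diagonal, since these elements generate $\bar{\Gamma}(3)$ by the presentation recorded just before the lemma.

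For (ii) the kernel of the surjection $A_4 \twoheadrightarrow \im\rho/K$ is a normal subgroup of $A_4$, hence one of $\{1\}$, the Klein four-group $V_4$, or $A_4$ itself. To eliminate the two nontrivial possibilities it suffices to exhibit a single element of $V_4$ whose $\rho$-image fails to be diagonal. The involution $\bar{S}$ has order $2$ in $\bar{\Gamma}/\bar{\Gamma}(3)$ (nontrivial since $S \not\equiv \pm I \pmod 3$), and every order-$2$ element of $A_4$ lies in $V_4$; but the explicit matrix $\rho(\bar{S})$ displayed above is antidiagonal and thus visibly not in $K$. This forces the kernel to be trivial and completes the argument. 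The only place any real care is required is in matching the Mackey decomposition with the specific monomial basis in which $\rho$ has been written; once that bookkeeping is done, the rest of the proof is formal.
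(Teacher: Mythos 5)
Your proof is correct, and its skeleton matches the paper's: first show that $K=\rho(\bar{\Gamma}(3))$ acts diagonally in the monomial basis (hence is abelian and normal), then show that the induced surjection $A_4\cong\bar{\Gamma}/\bar{\Gamma}(3)\twoheadrightarrow\im\rho/K$ is an isomorphism. For the first step the paper argues that $\bar{\Gamma}(3)$, being normal and contained in the stabilizer $\bar{\Gamma}_0(3)$ of the line $\CC v_1$, stabilizes every basis line because $\bar{\Gamma}$ permutes them transitively; your Mackey restriction argument (or the fallback check that the conjugates of the diagonal matrix $\rho\big(\bar{U}^3\big)$ by the monomial matrix $\rho\big(\bar{A}\big)$ are diagonal) is the same fact in different clothing. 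Where you genuinely diverge is the second step: the paper appeals tersely to the fact that $\rho\big(\bar{U}\big)$ and $\rho\big(\bar{A}\big)$ project onto order-$3$ elements modulo $K$, whereas you use that the only proper nontrivial normal subgroup of $A_4$ is $V_4$ and exhibit the antidiagonal matrix $\rho\big(\bar{S}\big)$, whose class in $A_4$ is an involution lying in $V_4$ yet which is visibly not in the diagonal group $K$, so the kernel of the surjection must be trivial. Your handling of this step is arguably more airtight: two order-$3$ images alone do not exclude a $C_3$ quotient unless one also observes that they fail to commute modulo $K$, a point the paper leaves implicit and your argument sidesteps entirely.
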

\begin{proof} The group $\bar{\Gamma}(3)$ leaves the span of the first coordinate vector invariant. Since $\bar{\Gamma}$ acts transitively on the basis then $\bar{\Gamma}(3)$ acts as a group of \textit{diagonal} matrices, hence it furnishes an abelian, normal subgroup of $\im\rho$, call it $K$. Now it suffices to show that the quotient group by~$K$ is isomorphic to~$A_4$. Indeed, $\bar{\Gamma}/\bar{\Gamma}(3)\cong A_4$, and the nature of the displayed matrices~$\rho\big(\bar{U}\big)$,~$\rho\big(\bar{A}\big)$, both of which project onto elements of order~$3$, shows that $\im\rho/K\cong A_4$. The Lemma is proved.
\end{proof}

\begin{lem} \label{l:ind4irr}
 $\rho$ is irreducible if, and only if, $\lambda^4\neq 1$.
\end{lem}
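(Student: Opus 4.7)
The plan is to decompose $\rho|_{\bar{\Gamma}(3)}$ as a direct sum $\chi_1\oplus\chi_2\oplus\chi_3\oplus\chi_4$ of characters of $\bar{\Gamma}(3)$ (possible since $K=\rho(\bar{\Gamma}(3))$ is diagonal in the chosen basis, by Lemma~\ref{lemsolvable}), and then to compare these four characters. The crux is that they will be pairwise distinct precisely when $\lambda^4\neq 1$; distinctness will force irreducibility via the transitive action of $\bar{\Gamma}/\bar{\Gamma}(3)\cong A_4$ on the four lines, while coincidence will force $\rho$ to factor through a projective representation of $A_4$, which must be reducible for dimension reasons.

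To carry out the comparison, I will work with the three free generators $g_1=\bar{U}^3$, $g_2=\bar{A}g_1\bar{A}^{-1}$, $g_3=\bar{A}^2 g_1\bar{A}^{-2}$ of $\bar{\Gamma}(3)$ provided by the proof of Lemma~\ref{lemcomm}. The formula for $\rho$ directly gives $\rho(g_1)=\diag(\lambda,\lambda,\lambda,\bar{\lambda}^3)$, and the monomial matrix $\rho(\bar{A})$ fixes the line $\CC v_1$ while $3$-cycling $\CC v_2\to\CC v_4\to\CC v_3\to\CC v_2$. Conjugating $\rho(g_1)$ by $\rho(\bar{A})^k$ therefore permutes the diagonal entries by the inverse $3$-cycle, yielding $\rho(g_2)=\diag(\lambda,\lambda,\bar{\lambda}^3,\lambda)$ and $\rho(g_3)=\diag(\lambda,\bar{\lambda}^3,\lambda,\lambda)$. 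Thus $(\chi_i(g_1),\chi_i(g_2),\chi_i(g_3))$ for $i=1,\dots,4$ is $(\lambda,\lambda,\lambda)$, $(\lambda,\lambda,\bar{\lambda}^3)$, $(\lambda,\bar{\lambda}^3,\lambda)$, $(\bar{\lambda}^3,\lambda,\lambda)$, and these tuples are pairwise distinct exactly when $\lambda\neq\bar{\lambda}^3$, i.e.\ exactly when $\lambda^4\neq 1$.

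If $\lambda^4\neq 1$, the four distinct characters make the lines $\CC v_i$ the isotypic components of $\rho|_{\bar{\Gamma}(3)}$. Any $\bar{\Gamma}$-invariant subspace is automatically $\bar{\Gamma}(3)$-invariant, hence a direct sum of some subset of these lines; since the action of $\bar{\Gamma}/\bar{\Gamma}(3)\cong A_4$ on the four cosets of $\bar{\Gamma}_0(3)$ is transitive, only $\{0\}$ and the whole space can occur, proving irreducibility. If $\lambda^4=1$, the four characters coincide, forcing $\rho(\bar{\Gamma}(3))\subseteq\CC^{\times}\cdot I$; then $\rho$ descends to a projective representation of $A_4$ on a $4$-dimensional space. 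Since $A_4$ has irreducible linear representations of dimensions $1,1,1,3$ and its Schur cover (the binary tetrahedral group of order $24$) adds only irreducibles of dimension $2$, every irreducible projective representation of $A_4$ has dimension at most $3$, so the $4$-dimensional $\rho$ must be reducible.

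The main technical point is tracking the cyclic permutation of lines induced by $\rho(\bar{A})$ and the corresponding permutation of diagonal entries under conjugation; both are routine once $\rho(\bar{A})$ is read off. A secondary point is the appeal to the classification of low-dimensional projective representations of $A_4$ in the reducible case, which if desired could be replaced by an explicit construction of common eigenvectors of $\rho(\bar{U})$ and $\rho(\bar{S})$ for each root of unity $\lambda$ with $\lambda^4=1$.
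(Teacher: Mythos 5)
Your proposal is correct, and both directions check out: the matrices $\rho(g_2)=\diag(\lambda,\lambda,\bar{\lambda}^3,\lambda)$ and $\rho(g_3)=\diag(\lambda,\bar{\lambda}^3,\lambda,\lambda)$ follow from the line permutation $v_1\mapsto v_1$, $\CC v_2\to\CC v_4\to\CC v_3\to\CC v_2$ induced by $\rho(\bar A)$, and the pairwise distinctness of the four characters of $\bar\Gamma(3)$ is indeed equivalent to $\lambda\neq\bar\lambda^3$, i.e., $\lambda^4\neq 1$ (recall $\lambda$ is a root of unity). Your route differs from the paper's in the irreducibility direction: the paper never uses the full set of generators $g_1,g_2,g_3$ of $\bar\Gamma(3)$, but instead runs a direct invariant-subspace chase~-- an invariant subspace $X$ is spanned by $U^3$-eigenvectors, cannot contain $v_4$ (which generates the module), hence $X\subseteq\langle v_1,v_2,v_3\rangle$; applying $S$ rules out $v_1$-components, so $X\subseteq\langle v_2,v_3\rangle$; and $U\colon v_2\mapsto v_3\mapsto\lambda v_1$ then forces $X=0$. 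Your Clifford-style argument (full isotypic decomposition of $\rho|_{\bar\Gamma(3)}$ into four distinct characters, so invariant subspaces are sums of coordinate lines, then transitivity of the $A_4$-action on the four cosets) is more systematic and makes transparent exactly where $\lambda^4\neq1$ enters, at the mild cost of computing the two extra conjugates of $\rho(\bar U^3)$. In the reducibility direction the two arguments coincide in substance~-- scalar action of $\bar\Gamma(3)$ yields a projective representation of a quotient of $A_4$, which cannot be irreducible in dimension $4$~-- except that the paper simply asserts that no central extension of $A_4$, $C_3$ or $1$ has a $4$-dimensional irreducible, whereas you justify this via the Schur cover $\SL_2(\ZZ/3\ZZ)$ of $A_4$, whose irreducibles have dimensions $1,1,1,2,2,2,3$; that is a welcome addition rather than a divergence.
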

\begin{proof} First assume that $\lambda^4\neq 1$, and take an invariant subspace $X$ of the module furnishing~$\rho$. $X$ is spanned by eigenvectors for $U^3$. Now, $X$ cannot contain $v_4$ because this generates the full module. Therefore, $X\subseteq\langle v_1, v_2, v_3\rangle$. In fact, we claim that $X\subseteq\langle v_2, v_3\rangle$. Otherwise, some vector in~$X$ projects nontrivially onto $v_1$ and then application of $S$ shows that some vector in~$X$ projects nontrivially onto $v_4$, an impossibility. Now $U\colon v_2\mapsto v_3 \mapsto \lambda v_1$, and we find that the only possibility is $X=0$.

Now suppose that $\lambda^4=1$. Then $U^3$ is represented by the scalar matrix $\lambda I_4$, whence all elements of $\pm \Gamma(3)$ are represented as scalars, because this group is the normal closure of $\pm U^3$ in~$\Gamma$. Then $\im\rho$ is isomorphic to a central extension (by the group $\langle \lambda \rangle$) of a quotient group of~$A_4$. This quotient group must be one of $A_4$, $C_3$, $1$, and no central extension of any of these groups has a $4$-dimensional irreducible representation. In other words, $\rho$ cannot be irreducible if $\lambda^4=1$. This completes the proof of the lemma.
\end{proof}

\begin{lem}\label{lemN} Suppose that $\ker\rho$ contains the principal congruence subgroup $\bar{\Gamma}(N)$, and suppose that $N$ is the \textit{least} such integer (the \textit{level} of $\ker \rho$). Then $N=2^a3^b$ for nonnegative integers~$a$,~$b$.
\end{lem}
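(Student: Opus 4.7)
The plan is to reduce the claim to a statement about the character $\chi$ of $\bar\Gamma_0(3)$ from which $\rho$ is induced, and then to exploit the perfectness of $\PSL_2(\ZZ/M)$ for $M$ coprime to $6$ in order to rule out prime factors of $N$ other than $2$ and $3$.

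I will first observe that, since $\rho=\Ind_{\bar\Gamma_0(3)}^{\bar\Gamma}\chi$, the kernel $\ker\rho$ is the largest normal subgroup of $\bar\Gamma$ contained in $\ker\chi$. Because $\bar\Gamma(N)$ is already normal in $\bar\Gamma$, the hypothesis $\bar\Gamma(N)\subseteq\ker\rho$ is equivalent to $\bar\Gamma(N)\subseteq\ker\chi$, and so in particular forces $\bar\Gamma(N)\subseteq\bar\Gamma_0(3)$. The matrix $\stwomat{1}{0}{N}{1}$ lies in $\bar\Gamma(N)$, so this last containment requires $3\mid N$ (the case $N=1$ is trivial). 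Write $N=2^a 3^b M$ with $b\geq 1$ and $\gcd(M,6)=1$; the goal then becomes to show $M=1$.

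My strategy will be to prove that $\chi$ must already be trivial on $\bar\Gamma(2^a 3^b)$, so that minimality of $N$ forces $M=1$. To this end, $\chi$ factors through
\[
\bar\Gamma_0(3)/\bar\Gamma(N)\hookrightarrow\bar\Gamma/\bar\Gamma(N)\cong\PSL_2\big(\ZZ/2^a 3^b\big)\times\PSL_2(\ZZ/M)
\]
by the Chinese remainder theorem. The condition $c\equiv 0\pmod{3}$ defining $\bar\Gamma_0(3)$ is independent of the reduction modulo $2^a M$, so the image of $\bar\Gamma_0(3)$ under this map is a direct product $H\times\PSL_2(\ZZ/M)$ for some subgroup $H\subseteq\PSL_2(\ZZ/2^a 3^b)$.

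The crux of the proof is the standard fact that $\PSL_2(\ZZ/M)$ is perfect whenever $\gcd(M,6)=1$. Via a further CRT reduction this comes down to the perfectness of each $\PSL_2(\ZZ/p^k)$ with $p\geq 5$, which follows from the classical fact that $\PSL_2(\FF_p)$ is a nonabelian simple group for $p\geq 5$, together with an induction on $k$ using that the kernels of the successive reductions $\SL_2(\ZZ/p^k)\to\SL_2(\ZZ/p^{k-1})$ are abelian $p$-groups whose elements are commutators inside the ambient group. Granting this, the character $\chi$ is trivial on the $\PSL_2(\ZZ/M)$ factor, so it descends to $H=\bar\Gamma_0(3)/(\bar\Gamma_0(3)\cap\bar\Gamma(2^a 3^b))$. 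Because $b\geq 1$, we have $\bar\Gamma(2^a 3^b)\subseteq\bar\Gamma_0(3)$, so this intersection equals $\bar\Gamma(2^a 3^b)$ itself, and $\chi$ is therefore trivial on $\bar\Gamma(2^a 3^b)$, as required. The only serious technical input is the perfectness statement for $\PSL_2(\ZZ/M)$; everything else is routine manipulation of the induced representation and the Chinese remainder theorem.
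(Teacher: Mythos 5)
Your proof is correct in substance, but it follows a genuinely different route from the paper's. The paper argues by contradiction using its Lemma \ref{lemsolvable}: since $\im\rho$ is an extension of $A_4$ by an abelian group it is solvable, whereas if a prime $p\geq 5$ divided the level then the perfect direct factor $\bar\Gamma/\bar\Gamma\big(p^k\big)$ of $\bar\Gamma/\bar\Gamma(N)$ would (by minimality of $N$) have nontrivial image in $\im\rho$, making it nonsolvable. You bypass the solvability lemma entirely and work with the induced structure itself: $\ker\rho$ is the normal core of $\ker\chi$, so $\bar\Gamma(N)\subseteq\ker\rho$ iff $\bar\Gamma(N)\subseteq\ker\chi$, and since $\chi$ is one-dimensional it must annihilate the perfect group $\bar\Gamma\big(2^a3^b\big)/\bar\Gamma(N)$, forcing $\bar\Gamma\big(2^a3^b\big)\subseteq\ker\rho$ and hence $M=1$ by minimality. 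Both proofs ultimately rest on the same input, the perfectness of $\SL_2\big(\ZZ/p^k\big)$ (equivalently $\PSL_2$) for $p\geq 5$; yours is self-contained modulo that fact and even shows the level is divisible by $3$, while the paper's is shorter given that Lemma \ref{lemsolvable} is already in hand. One small imprecision to fix: $\bar\Gamma/\bar\Gamma(N)$ is \emph{not} isomorphic to $\PSL_2\big(\ZZ/2^a3^b\big)\times\PSL_2(\ZZ/M)$, since one quotients $\SL_2(\ZZ/N)\cong\SL_2\big(\ZZ/2^a3^b\big)\times\SL_2(\ZZ/M)$ only by the diagonal $\pm(I,I)$, leaving a central $\ZZ/2$ discrepancy; likewise the image of $\bar\Gamma_0(3)$ need not literally be a direct product $H\times\PSL_2(\ZZ/M)$. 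This does not damage the argument: what you actually use is that the image of $\bar\Gamma\big(2^a3^b\big)$ in $\bar\Gamma/\bar\Gamma(N)$, namely $\bar\Gamma\big(2^a3^b\big)/\bar\Gamma(N)\cong\SL_2(\ZZ/M)$, is perfect when $\gcd(M,6)=1$, and any homomorphism to $\CC^\times$ kills it; rephrasing that step in terms of this subgroup (rather than a product decomposition of the ambient quotient) makes the proof airtight.
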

\begin{proof}We have $\im\rho\cong \Gamma/\ker \rho \cong(\Gamma/\Gamma(N)/(\ker \rho/\Gamma(N))$. Suppose that $N=Mp^k$ for some prime $p\geq 5$ where $M$ is coprime to $p$ and $k\geq 1$. Then $\Gamma/\Gamma(N)$ has a direct factor isomorphic to~$\Gamma/\Gamma\big(p^k\big)$. This last group is \textit{perfect} (i.e., it coincides with its commutator subgroup), and hence it has \textit{no} nontrivial solvable quotient groups and in particular $\im\rho$ must be nonsolvable. This contradicts Lemma~\ref{lemsolvable}.
\end{proof}

\begin{ex}Suppose that $\lambda=1$. In this case we have seen that $\ker\rho=\bar{\Gamma}(3)$ and $K=1$. In effect, then, $\rho$ is the representation of $\bar{\Gamma}/\bar{\Gamma}(3)\cong A_4$ that obtains if we induce the $1$-dimensional character
of a Sylow $3$-subgroup $A_4$ to the full group.
\end{ex}

\begin{ex}
We first recall that for each $k\geq 1$ there is a short exact sequence
\begin{gather*}
1\rightarrow\bar{\Gamma}(3)/\bar{\Gamma}\big(3^k\big)\rightarrow\bar{\Gamma}/\bar{\Gamma}\big(3^k\big) \rightarrow A_4\rightarrow 1
\end{gather*}
and that furthermore $P\df \Gamma(3)/\Gamma(3^k)$ is a finite $3$-group. Then a $3$-Sylow-subgroup of the middle group $\Gamma/\Gamma\big(3^k\big)$ may be taken to be $P\rtimes\langle A\rangle$. Take a $1$-dimensional character $\chi\colon P\rtimes\langle A\rangle\rightarrow \CC^\times$ and let $\chi_0$ be the restriction of $\chi$ to $P$. This character is $\langle A\rangle$-invariant, and we assume that the \textit{stabilizer} of $\psi_0$ in $A_4$ is nothing but $\langle A\rangle$.
In this case we know by Clifford theory, cf.~\cite[Section~11B]{CurtisReiner}, that the induced representation $\rho\df \Ind_{P\rtimes\langle A\rangle}^{\bar{\Gamma}/\bar{\Gamma}(3^k)}$ is irreducible.
\end{ex}

\begin{ex}
We have $\abs{\bar{\Gamma}(3)/\bar{\Gamma}(9)}=3^3$. Now calculate that this group is abelian of exponent~$3$, so that $\bar{\Gamma}(3)/\bar{\Gamma}(9)\cong \ZZ_3^3$. Then we can get several admissible characters $\chi$ with $\lambda$ a~primitive cube root of unity and any $\epsilon$.
\end{ex}

\begin{lem} For all integers $f\geq 2$ and $M\geq 1$ coprime to $3$, we have $\bar{\Gamma}\big(3^fM\big)\bar{\Gamma}(3)'=\bar{\Gamma}(9)$.
\end{lem}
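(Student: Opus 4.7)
The plan is to establish both inclusions. The easy direction $\bar{\Gamma}(3^f M)\bar{\Gamma}(3)' \subseteq \bar{\Gamma}(9)$ follows immediately from two observations: since $f\ge 2$ we have $9\mid 3^f M$, so $\bar{\Gamma}(3^f M)\subseteq \bar{\Gamma}(9)$; and since the preceding example identified $\bar{\Gamma}(3)/\bar{\Gamma}(9)\cong \FF_3^3$ as abelian, we have $\bar{\Gamma}(3)'\subseteq \bar{\Gamma}(9)$.

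For the reverse inclusion, I would exploit the Lie-theoretic structure of the $3$-adic congruence filtration. For each $k\ge 1$ there is a canonical isomorphism $\bar{\Gamma}(3^k)/\bar{\Gamma}(3^{k+1})\cong \mathfrak{sl}_2(\FF_3)$ sending $I+3^k X\mapsto X\bmod 3$, and under these identifications the group commutator restricts to the Lie bracket on associated graded pieces: for $A=I+3^a X$ and $B=I+3^b Y$ one computes $[A,B]\equiv I+3^{a+b}[X,Y]\pmod{3^{a+b+1}}$. Because $\mathfrak{sl}_2(\FF_3)$ is perfect (a direct verification from the relations $[h,e]=-e$, $[h,f]=f$, $[e,f]=h$), every element of $\bar{\Gamma}(3^k)/\bar{\Gamma}(3^{k+1})$ with $k\ge 2$ is represented by a commutator of an element of $\bar{\Gamma}(3)$ with one of $\bar{\Gamma}(3^{k-1})$, and hence lies in the image of $\bar{\Gamma}(3)'$. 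Induction on $k$ then yields $\bar{\Gamma}(9)\subseteq \bar{\Gamma}(3)'\bar{\Gamma}(3^f)$, which settles the case $M=1$.

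To handle a factor $M$ coprime to $3$, I would invoke the Chinese remainder theorem, which gives $\Gamma/\Gamma(3^f M)\cong \SL_2(\ZZ/3^f)\times \SL_2(\ZZ/M)$ and identifies the $M$-component of $\bar{\Gamma}(3^f)/\bar{\Gamma}(3^f M)$ with $\SL_2(\ZZ/M)$. The residual inclusion then reduces to showing that the image of $\bar{\Gamma}(3)'$ fills out this component, which in turn follows from strong approximation (giving a surjection $\bar{\Gamma}(3)\twoheadrightarrow \SL_2(\ZZ/M)$) together with the identification of $\SL_2(\ZZ/M)$ with its own commutator subgroup.

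The step I expect to be the main obstacle is this last one. Perfectness of $\SL_2(\ZZ/M)$ is straightforward when every prime factor of $M$ is at least $5$, but it is delicate when $2\mid M$: in that case $\SL_2(\FF_2)\cong S_3$ has commutator subgroup $A_3$ of index two. Bridging this gap will require combining the $3$-adic filtration argument with the additional constraints on $M$ coming from the ambient context, for example the structural restrictions on $\im\rho$ established in Lemma~\ref{lemsolvable}.
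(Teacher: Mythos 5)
Your argument for the case $M=1$ is correct and takes a genuinely different route from the paper's. The paper exhibits, by a direct calculation, a single matrix in $\bar{\Gamma}(3)'\setminus\bar{\Gamma}(27)$ and then parlays the normality of $\bar{\Gamma}(3)'$ into surjectivity of $\bar{\Gamma}(3^f)\bar{\Gamma}(3)'$ onto $\bar{\Gamma}(9)/\bar{\Gamma}(3^f)$; you instead use the graded pieces $\bar{\Gamma}(3^k)/\bar{\Gamma}(3^{k+1})\cong\mathfrak{sl}_2(\FF_3)$ together with perfectness of $\mathfrak{sl}_2(\FF_3)$ to fill each layer $k\geq 2$ by commutators of $\bar{\Gamma}(3)$ with $\bar{\Gamma}(3^{k-1})$, and then telescope down to level $3^f$. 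This is more systematic and, if anything, supplies the step the paper leaves implicit; the only cosmetic corrections are that a general class in a layer is a \emph{product} of such commutators rather than a single one, and that the identification of the layers rests on the standard surjectivity of $\SL_2(\ZZ)\to\SL_2(\ZZ/3^k)$.

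The genuine gap is the $M>1$ step, and it cannot be repaired in the way you suggest: the lemma is a statement purely about congruence subgroups, so the structural restrictions on $\im\rho$ from Lemma~\ref{lemsolvable} are not available to constrain $M$. More importantly, the obstacle you isolated is real. Reducing modulo $\bar{\Gamma}(3^fM)$ and using CRT, one has $Q:=\bar{\Gamma}(3)/\bar{\Gamma}(3^fM)\cong K\times\SL_2(\ZZ/M)$ with $K=\ker\big(\SL_2(\ZZ/3^f)\to\SL_2(\ZZ/3)\big)$; then $\bar{\Gamma}(3^fM)\bar{\Gamma}(3)'$ is the preimage of $Q'=K'\times\SL_2(\ZZ/M)'$, while $\bar{\Gamma}(9)$ surjects onto $\ker\big(\SL_2(\ZZ/3^f)\to\SL_2(\ZZ/9)\big)\times\SL_2(\ZZ/M)$, and your filtration argument identifies $K'$ with that kernel. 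So equality holds precisely when $\SL_2(\ZZ/M)$ is perfect, i.e., precisely when $M$ is odd; for even $M$ the statement is actually false. For instance, with $(f,M)=(2,2)$ one computes $(\bar{\Gamma}(3)/\bar{\Gamma}(18))^{\rm ab}\cong(\ZZ/3\ZZ)^3\times(\ZZ/2\ZZ)$, of order $54$, so $\bar{\Gamma}(18)\bar{\Gamma}(3)'$ has index $2$ in $\bar{\Gamma}(9)$. The paper's one-line reduction (``now apply the previous argument'' after CRT) glosses over exactly this point, so your analysis is sharper than the source here. What your $3$-adic computation does prove, with no perfectness input at all, is the weaker containment $\bar{\Gamma}(9M)\subseteq\bar{\Gamma}(3^fM)\bar{\Gamma}(3)'$ (the $3$-part stabilizes at $9$), which is all that the corollary that follows ($b\leq 2$) requires; and for odd $M$ your CRT step is fine and yields the lemma as stated.
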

\begin{proof}
 If $f=2$, $M=1$ this follows from the preceding Example $3$, so we may suppose that $f\geq 3$. By direct calculation we can find a matrix $M\in \bar{\Gamma}(3)'\setminus{\bar{\Gamma}\big(3^3\big)}$. Since $1\neq M\bar{\Gamma}\big(3^3\big)\subseteq\bar{\Gamma}(9)$ it follows that $\bar{\Gamma}\big(3^{f}\big)\bar{\Gamma}(3)'$ projects \textit{onto} the quotient $\bar{\Gamma}(9)/\bar{\Gamma}\big(3^{f}\big)$. This proves the lemma if $M=1$.

If $M>1$ then $\bar{\Gamma}/\bar{\Gamma}\big(3^fM\big)\cong \bar{\Gamma}/\bar{\Gamma}(M) \times \bar{\Gamma}/\bar{\Gamma}\big(3^f\big)$. Now apply the previous argument.
\end{proof}

\begin{cor}
 In the notation of Lemma~{\rm \ref{lemN}}, we always have $b\leq 2$.
\end{cor}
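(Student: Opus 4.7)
The plan is to argue by contradiction, combining two inputs: the abelianness of $\rho(\bar\Gamma(3))$ established in Lemma~\ref{lemsolvable}, and the structural identity $\bar\Gamma(3^fM)\bar\Gamma(3)' = \bar\Gamma(9)$ from the preceding lemma. The former forces the commutator subgroup $\bar\Gamma(3)'$ into $\ker\rho$, while the latter lets us combine this with the inclusion $\bar\Gamma(N)\subseteq\ker\rho$ to deduce that $\bar\Gamma(9)\subseteq\ker\rho$, which then shrinks the level.

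In more detail, the first step is to observe that since $K = \rho(\bar\Gamma(3))$ is abelian (Lemma~\ref{lemsolvable}), every commutator from $\bar\Gamma(3)$ lies in the kernel, hence $\bar\Gamma(3)' \subseteq \ker\rho$. Next, I would assume for contradiction that $b \geq 3$ and apply the preceding lemma with parameters $f = b$ (legitimate because $b \geq 3 \geq 2$) and $M = 2^a$ (coprime to $3$ by construction), to conclude
\[
\bar\Gamma(N)\,\bar\Gamma(3)' \;=\; \bar\Gamma\bigl(2^a 3^b\bigr)\,\bar\Gamma(3)' \;=\; \bar\Gamma(9).
\]

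Both factors on the left are subsets of $\ker\rho$: $\bar\Gamma(N)$ by the definition of the level $N$ in Lemma~\ref{lemN}, and $\bar\Gamma(3)'$ by the previous paragraph. Hence $\bar\Gamma(9)\subseteq\ker\rho$. Now I would invoke the standard minimality property of the level: among all integers $N'$ with $\bar\Gamma(N') \subseteq \ker\rho$, the level $N$ divides each one (this rests on the well-known identity $\bar\Gamma(N_1)\bar\Gamma(N_2) = \bar\Gamma(\gcd(N_1,N_2))$, proved via strong approximation in $\SL_2$). Applying this with $N' = 9$ forces $N \mid 9$, so $N \in \{1,3,9\}$, in direct contradiction to $b \geq 3$.

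I don't anticipate a serious obstacle here: the preceding lemma essentially does all the heavy lifting, and once one recognizes that $\bar\Gamma(3)'\subseteq\ker\rho$ is a free consequence of Lemma~\ref{lemsolvable}, the rest is a short deduction about levels. The only point requiring minor care is citing (or invoking implicitly) the gcd property of congruence subgroups, so that minimality of $N$ can be upgraded to divisibility.
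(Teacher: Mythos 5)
Your argument is correct and is exactly the deduction the paper leaves implicit: abelianness of $K=\rho(\bar\Gamma(3))$ puts $\bar\Gamma(3)'$ in $\ker\rho$, the preceding lemma (with $f=b\geq 3$, $M=2^a$) then forces $\bar\Gamma(9)\subseteq\ker\rho$, contradicting minimality of the level. The only cosmetic difference is that you invoke the gcd/divisibility property of levels, which is more than needed—since $b\geq 3$ gives $N\geq 27>9$, the ``least such integer'' definition in Lemma~\ref{lemN} already yields the contradiction directly.
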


\begin{lem}
 In the notation of Lemma~{\rm \ref{lemN}}, we always have $a\leq 2$.
\end{lem}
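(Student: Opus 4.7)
My plan is to follow the same strategy used in the proof of the preceding corollary that $b\leq 2$. That proof rested on two inputs: the structural fact $\bar\Gamma(3)'\subseteq \ker\rho$ (because $\rho(\bar\Gamma(3))$ lies in the diagonal torus and is thus abelian) together with the combinatorial identity $\bar\Gamma(3^fM)\bar\Gamma(3)'=\bar\Gamma(9)$ for $f\geq 2$ and $M$ coprime to $3$ from the previous lemma. Together these force the $3$-part of the minimal level to be at most $9$. My approach at the prime $2$ is to replace $\bar\Gamma(3)'$ by the subgroup $\bar\Gamma(6)'$. First, since $\bar\Gamma(6)\subseteq \bar\Gamma(3)$, one has $\rho(\bar\Gamma(6))\subseteq K$ abelian, whence $\bar\Gamma(6)'\subseteq \ker\rho$ without further hypothesis. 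Secondly, a direct matrix computation of the form $(I+6A)(I+6B)\equiv I+6(A+B)\pmod{36}$ shows that $\bar\Gamma(6)'\subseteq \bar\Gamma(36)$.

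The key technical step will be to establish a companion identity: for every $e\geq 3$ and $M\geq 1$ coprime to $2$,
\[
\bar\Gamma(2^eM)\,\bar\Gamma(6)'=\bar\Gamma\bigl(4\cdot\mathrm{lcm}(9,M)\bigr).
\]
The inclusion $\subseteq$ is immediate from the two containments just noted. For the reverse inclusion I would proceed exactly as in the proof of the previous lemma: in the base case $e=3$, $M=1$, it suffices to show that commutators of elements of $\bar\Gamma(6)$, together with $\bar\Gamma(8)$, span all of $\bar\Gamma(4)$, which I would verify by writing down an explicit commutator of matrices of $\bar\Gamma(6)$ lying in $\bar\Gamma(6)'\setminus \bar\Gamma(8)$ and then exploiting the $A_4$-action on the abelian quotient $\bar\Gamma(4)/\bar\Gamma(8)\cong (\ZZ/2)^3$ to fill out the entire space. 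For larger $e$, one iterates the procedure, exhibiting a matrix in $\bar\Gamma(6)'\setminus \bar\Gamma(2^e)$ at each stage and appealing to the fact that the successive principal congruence quotients $\bar\Gamma(2^k)/\bar\Gamma(2^{k+1})\cong (\ZZ/2)^3$ are abelian of exponent $2$ for $k\geq 2$. Multiplication by $\bar\Gamma(M)$ then handles arbitrary $M$ coprime to $2$.

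Granted the companion identity, the conclusion is immediate and exactly parallels the preceding corollary. Suppose $N=2^a3^b$ is the minimal level with $a\geq 3$; applying the identity with $e=a$ and $M=3^b$ gives
\[
\bar\Gamma\bigl(4\cdot\mathrm{lcm}(9,3^b)\bigr)\subseteq \bar\Gamma(2^aM)\,\bar\Gamma(6)'\subseteq \ker\rho.
\]
But $4\cdot \mathrm{lcm}(9,3^b)$ has $2$-part equal to $4$, whereas $N$ has $2$-part $2^a\geq 8$, contradicting the minimality of $N$. Hence $a\leq 2$ as claimed.

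The main obstacle in carrying out this plan is the verification of the surjectivity in the companion identity, namely that commutators of elements of $\bar\Gamma(6)$ are numerous enough to generate all of $\bar\Gamma(4)/\bar\Gamma(2^e)$. This is where the analogy with the previous lemma needs the most care, since that proof at the prime $3$ benefits from the particularly transparent structure $\bar\Gamma(3)/\bar\Gamma(9)\cong\ZZ_3^3$ of exponent $3$, whereas at the prime $2$ one must track abelian quotients of exponent $2$ through several successive congruence levels.
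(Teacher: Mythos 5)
Your plan breaks down at exactly the step you flag as the "main obstacle," and the failure is not a matter of missing a clever commutator: the containment your application needs, $\bar{\Gamma}\big(4\cdot\lcm(9,M)\big)\subseteq\bar{\Gamma}\big(2^eM\big)\,\bar{\Gamma}(6)'$, is false. Work modulo $8$. If $x=I+2A$, $y=I+2B$ lie in $\SL_2(\ZZ/8)$, the determinant condition forces $\tr A\equiv\tr B\equiv 0\pmod 2$, and one computes $[x,y]\equiv I+4(AB-BA)\pmod 8$. Over $\FF_2$ we have $AB-BA=AB+BA$, and polarizing Cayley--Hamilton shows that $AB+BA$ is a \emph{scalar} matrix whenever $A$, $B$ have trace zero. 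Hence every commutator of matrices $\equiv I\pmod 2$ is $\equiv I$ or $5I\pmod 8$, so the image of $\bar{\Gamma}(6)'$ (indeed of $\bar{\Gamma}(2)'$) in $\SL_2(\ZZ/8)$ lies in $\{I,5I\}$, and even after allowing the sign ambiguity from $\pm I$ every element of $\bar{\Gamma}\big(2^e\big)\,\bar{\Gamma}(6)'$ reduces mod $8$ to a scalar matrix. But $\stwomat{1}{36}{0}{1}\in\bar{\Gamma}(36)$ reduces to $\stwomat 1401$, which is not scalar mod $8$; so already for $e=3$, $M=1$ one has $\bar{\Gamma}(36)\not\subseteq\bar{\Gamma}(8)\,\bar{\Gamma}(6)'$, and your argument collapses. (Two lesser issues: the $\subseteq$ direction of your stated equality is also false for general odd $M$, e.g.\ $M=5$; and the endgame should not invoke "minimality" literally, since e.g.\ $N=8<36$~-- one should instead use that the level divides every $N'$ with $\bar{\Gamma}(N')\subseteq\ker\rho$. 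But these are minor next to the failure above.)

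The obstruction is structural and explains why the analogy with the $b\leq 2$ argument cannot be pushed through at $2$: for an odd prime the Lie algebra $\mathfrak{sl}_2(\FF_p)$ is perfect, which is what makes commutators of $\bar{\Gamma}(3)$ fill out level $9$ in the previous lemma, whereas the derived algebra of $\mathfrak{sl}_2(\FF_2)$ consists of scalars, so commutators of $\bar{\Gamma}(2)$ (or $\bar{\Gamma}(6)$) are scalar mod $8$ and no analogue of that lemma holds. This is precisely why the paper argues differently here: it works on the image side, setting $K=\rho(\bar{\Gamma}(3))$ and $K_0$ its odd part, so that $K/K_0$ is either cyclic of order $2^x$ or homocyclic $(C_{2^x})^3$; since $\Aut$ of a cyclic $2$-group is a $2$-group, the $A_4\cong\im\rho/K$ action on a cyclic (sub)quotient is trivial, producing a cyclic $2$-group quotient of $\im\rho$, which is constrained because $\bar{\Gamma}$ has abelianization $C_6$; this bounds $x\leq 2$ in both cases and hence $a\leq 2$. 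Salvaging your route would require replacing $\bar{\Gamma}(6)'$ by a subgroup of $\ker\rho$ with genuinely large $2$-adic closure, and identifying such a subgroup essentially amounts to redoing the paper's image-side argument.
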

\begin{proof} Let the notation be as before and let $K_0$ be the \textit{odd} part of $K$, that is, the subgroup generated by all elements of odd order. Then $K/K_0$ is either cyclic of order $2^x$ or homocyclic~$(C_{2^x})^3$. In the first case our $A_4$ group must act trivially on $K/K_0$, and this implies that $\im\rho$ has a quotient isomorphic to $C_{2^{x-1}}$. Therefore $x\leq 2$ in this case. In the homocyclic case, we observe that the $A_4$ subgroup leaves invariant a $(C_{2^x})^2$-subgroup of $(C_{2^x})^3$, and therefore we may apply the previous argument to the $C_{2^x}$-quotient group to again see that $x\leq 2$. This completes the proof of the lemma.
\end{proof}

This finally brings us to
\begin{thm}
 If the induced representation $\rho$ is a congruence representation then its level must divide $36$.
\end{thm}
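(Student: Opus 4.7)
The plan is to simply assemble the three results that immediately precede the theorem. By the lemma on $N$, if $\rho$ is a congruence representation of level $N$, then no prime $p \geq 5$ can divide $N$, so $N = 2^a 3^b$ for some nonnegative integers $a$ and $b$. The corollary immediately after the lemma on $\bar{\Gamma}(3^f M)\bar{\Gamma}(3)'$ gives $b \leq 2$, and the final lemma gives $a \leq 2$. Putting these together yields $N \mid 2^2 \cdot 3^2 = 36$, as claimed.

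In more detail, first I would note that Lemma~\ref{lemsolvable} is the engine behind everything: it forces $\im\rho$ to be solvable (an extension of $A_4$ by an abelian group $K$), which is what ruins any prime $p \geq 5$ from dividing $N$, since $\Gamma/\Gamma(p^k)$ is perfect for such $p$ and would force $\im\rho$ to contain a nonabelian simple composition factor. Thus the prime factorization of $N$ is restricted to $\{2,3\}$.

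Next, to bound $b \leq 2$, the lemma identifying $\bar{\Gamma}(3^f M)\bar{\Gamma}(3)' = \bar{\Gamma}(9)$ for $f \geq 2$ shows that any character of the abelian subgroup $K$ coming from $\bar{\Gamma}(3)$ factors through $\bar{\Gamma}(3)/\bar{\Gamma}(9)$. Hence the $3$-part of the level is at most $9$.

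Finally, to bound $a \leq 2$, I would invoke the last lemma, whose argument studies the action of the $A_4$-quotient on the $2$-part $K/K_0$ of the abelian kernel. In both the cyclic and homocyclic cases, the $A_4$-invariance forces a nontrivial cyclic quotient of $\im\rho$ of order $2^{x-1}$, and comparing with the commutator quotient structure of $\Gamma$ from Lemma~\ref{lemalpha} (which caps abelian quotients at order dividing $12$, so in particular caps $2$-power quotients at $4$) yields $x \leq 2$. Combining, $a \leq 2$, $b \leq 2$, and $N \mid 36$. There is essentially no obstacle here; the work has all been done in the preceding lemmas, and the theorem is really a summary statement.
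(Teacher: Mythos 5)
Your proposal is correct and matches the paper, which gives no separate argument for this theorem beyond assembling Lemma~\ref{lemN} (so $N=2^a3^b$), the corollary giving $b\leq 2$, and the final lemma giving $a\leq 2$. One small caution on your gloss of that last lemma: the bound $x\leq 2$ there comes from the fact that $\bar{\Gamma}\cong\PSL_2(\ZZ)$ has abelianization cyclic of order $6$, so a cyclic $2$-power quotient of $\im\rho$ has order at most $2$; appealing instead to the order-$12$ character group of $\SL_2(\ZZ)$, i.e.\ capping $2$-power quotients at $4$, would only yield $x\leq 3$ and hence level dividing $72$.
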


Now we would like to describe a family of vvmfs corresponding to this data. To this end, notice that $\Gamma_0(3)$ corresponds to the dessins d'enfants
\begin{center}
\begin{tikzpicture}
\draw (1,0) circle (1cm);
\draw (0,0) circle (0.15 cm);
\filldraw[black] (2,0) circle (0.15 cm);
\draw[thick] (2,0)--(4,0);
\draw (3,0) circle (0.15 cm);
\filldraw[black] (4,0) circle (0.15cm);
\end{tikzpicture}
\end{center}
whose associated Belyi function $z$ over the $j$-line~-- ramified at $0$, $1728$ and $\infty$~-- is defined by the minimal polynomial:
\[
 j = \frac{(z+27)(z+243)^3}{z^3}.
\]
The roots of this polynomial generate the function fields of the conjugates of $\Gamma_0(3)$. Newton's method yields expressions for these conjugates in terms of $1/j$: if $x=j^{-1/3}$ then the conjugates have expansions
\begin{gather*}
 z_1 = j-756-196830j^{-1}-167935356j^{-2}-180552296781 j^{-3}-\cdots,\\
 z_2 = 729\big(x+12x^2+90x^3+756x^4+8343x^5+76788x^6+729000x^7+835340x^8+\cdots\big),
\end{gather*}
$z_3 = z_2(ux)$ and $z_4 = z_2\big(u^2x\big)$ where $u^2+u+1=0$. This leads to the following $q$-series expansions:
\begin{gather*}
z_1= \frac{1}{q}-12+54q-76q^2-243q^3+1188q^4-1384q^5-2916q^6+11934q^7+\cdots\\
 z_2 = 729\big(q^{1/3}+12q^{2/3}+90q+508q^{4/3}+2391q^{5/3}+9828 q^{2}+36428q^{7/3}+\cdots\big),
\end{gather*}
$z_3 = z_2\big(uq^{1/3}\big)$ and $z_4 = z_2\big(u^2q^{1/3}\big)$.

It is well-known that $z_1 = \big(\eta(q)/\eta\big(q^3\big)\big)^{12}$ and $z_2 = 729\big(\eta(q)/\eta\big(q^{1/3}\big)\big)^{12}$. Notice that if we choose $u = {\rm e}^{2\pi{\rm i}/3}$, then $T$ fixes $z_1$ and permutes the other $z_j$ as $z_2\mapsto z_3\mapsto z_4\mapsto z_2$. From the transformation law of $\eta$ one finds that $z_1|S=z_2$. Since $S$ acts on these functions as a~permutation of order $2$, and it does not fix any of them (otherwise the group leaving them invariant would be too large) we see that $T$ and $S$ act on the vvmf $(z_1,z_2,z_3,z_4)^{\rm T}$ via the matrices
\begin{gather*}
\rho'(T) = \left(\begin{matrix}
 1&0&0&0\\
 0&0&1&0\\
 0&0&0&1\\
 0&1&0&0
 \end{matrix}\right),\qquad \rho'(S) = \left(\begin{matrix}
 0&1&0&0\\
 1&0&0&0\\
 0&0&0&1\\
 0&0&1&0
 \end{matrix}\right).
\end{gather*}

The modular derivative $D$ in weight zero is $D=q{\rm d}/{\rm d}q$. Then
\[
 f_1\df -\frac{D(z_1)}{z_1} = 1+12q+36q^2+12q^3+84q^4 +\cdots
\]
is the unique eigenform of weight $2$ for $\Gamma_0(3)$. Said differently, $z_1$ is the unique solution of the MLDE $(D+f_1)z_1=0$. More generally, let $f_j=-D(z_j)/z_j$ and consider the MLDEs $(D+\lambda f_j)g=0$ which have the solutions $g=z_j^\lambda$. This $\lambda$ is the exponent deformation parameter of our family.

Our next goal is to find the induced MLDE of rank $4$ for the full modular group. To this end, recall that we have $D(E_4) = -(1/3)E_6$, $D(E_6) = -(1/2)E_4^2$ and $D(\Delta) = 0$. Thus,
\[
 D(j) = \frac{-E_4^2E_6}{\Delta} = -\frac{E_6j}{E_4}.
\]
Differentiating the defining Belyi relation then gives
\[
 f_j = \frac{E_6(z_j+27)(z_j+243)}{E_4(z^2_j-486z_j-19683)}.
\]
It follows that $D(f_j) = -\tfrac 14 f_j^2 + \tfrac{1}{12}E_4$ and thus
\begin{gather*}
 D\big(z_j^\lambda\big) = -\lambda f_jz_j^\lambda,\\
 D^2\big(z_j^\lambda\big)= \left(\tfrac{4\lambda^2+\lambda}{4} f_j^2 - \tfrac{\lambda}{12}E_4\right)z_j^\lambda,\\
D^3\big(z_j^\lambda\big) = \left(-\tfrac{8\lambda^3+6\lambda^2+\lambda}{8} f_j^3+\tfrac{6\lambda^2+\lambda}{24} f_jE_4 + \tfrac{\lambda}{36}E_6\right) z_j^\lambda,\\
D^4\big(z_j^\lambda\big) = \left(\tfrac{32\lambda^4+48\lambda^3+22\lambda^2+3\lambda}{32} f_j^4 -\tfrac{12\lambda^3+7\lambda^2+\lambda}{24} f_j^2E_4 + \tfrac{2\lambda^2-\lambda}{96}E_4^2-\tfrac{8\lambda^2+\lambda}{72} f_jE_6\right) z_j^\lambda.
\end{gather*}
Note that $f_j^4$, $f_j^2E_4$, $E_4^2$, $f_jE_6$ are linearly dependent:
\[
 f_j^4 = \tfrac 23 f_j^2E_4+\tfrac{1}{27}E_4^2+\tfrac{8}{27}f_jE_6.
\]
Therefore the relation for $D^4(z_j^\lambda)$ above can be reexpressed as
\[
D^4\big(z_j^\lambda\big) = \left(\tfrac{32\lambda^4+24\lambda^3+8\lambda^2+\lambda}{48} f_j^2E_4 +\tfrac{16\lambda^4+24\lambda^3+20\lambda^2-3\lambda}{432}E_4^2+\tfrac{64\lambda^4+96\lambda^3+20\lambda^2+3\lambda}{216} f_jE_6\right) z_j^\lambda.
\]
If $z_j^\lambda$ satisfies a monic MLDE of level one, then we should be able to find a relation:
\[
 D^4\big(z_j^\lambda\big) +A(\lambda) E_4D^2\big(z_j^\lambda\big)+B(\lambda) E_6D\big(z_j^\lambda\big)+C(\lambda) E_4^2z_j^\lambda = 0
\]
for rational functions $A,B,C \in \QQ(\lambda)$. Since $f_j^2E_4$, $E_4^2$ and $f_jE_6$ are linearly independent, one deduces that
\begin{gather*}
 A(\lambda) = \tfrac 23 \lambda^2+\tfrac 13\lambda+\tfrac{1}{12},\\
 B(\lambda) = \tfrac{1}{9}\lambda^3+\tfrac{4}{9}\lambda^2+\tfrac{5}{54}\lambda+\tfrac{1}{72},\\
 C(\lambda) = -\tfrac{1}{27}\lambda\big(\lambda-\tfrac 12\big)\big(\lambda^2+\tfrac 12\lambda+\tfrac 34\big).
\end{gather*}
This shows that we are in the cyclic case of \cite{FM5}.

Our goal now is to find the monodromy for $\big(z_1^\lambda,z_2^\lambda,z_3^\lambda,z_4^\lambda\big)^{\rm T}$, which abstractly is isomorphic with the induced representations $\rho$ discussed previously, with $\veps = 1$. But since we will want to symmetrize~$\rho'(S)$, we will need to determine this representation exactly within its isomorphism class.

Since $T$ and $S$ permute the differential equations $D+\lambda f_j$ in the same way as they do the $z_j$, we find that there are functions of $\lambda$ such that $\Gamma$ acts on the solutions $z_j^\lambda$ as
\begin{align*}
\rho'(T) &= \left(\begin{matrix}
 A(\lambda)&0&0&0\\
 0&0&B(\lambda)&0\\
 0&0&0&C(\lambda)\\
 0&D(\lambda)&0&0
 \end{matrix}\right),& \rho'(S) &= \left(\begin{matrix}
 0&E(\lambda)&0&0\\
 F(\lambda)&0&0&0\\
 0&0&0&G(\lambda)\\
 0&0&H(\lambda)&0
 \end{matrix}\right).
\end{align*}
The identity $\rho'(S)^2 =1$ implies that $F=E^{-1}$ and $H=G^{-1}$. Then the condition that $\rho'(R)^3 =1$ implies that $ABDG=1$ and $C^3=H^3$. The monodromy $\rho'(T)$ can in fact be read off from our expansions:
\begin{gather*}
 z_1(\tau)^\lambda ={\rm e}^{-2\pi{\rm i}\lambda\tau}\prod_{n\geq 0}\left(\frac{1-q^{n}}{1-q^{3n}}\right)^{12\lambda},\\
 z_2(\tau)^\lambda =729^\lambda {\rm e}^{2\pi{\rm i} \lambda \tau/3}\prod_{n\geq 0}\big(1+{\rm e}^{2\pi{\rm i} n\tau/3}+{\rm e}^{4\pi{\rm i} n\tau /3}\big)^{12\lambda},\\
 z_3(\tau)^\lambda =729^\lambda {\rm e}^{2\pi{\rm i} \lambda(\tau+1)/3}\prod_{n\geq 0}\big(1+u{\rm e}^{2\pi{\rm i} n\tau/3}+u^2{\rm e}^{4\pi{\rm i} n\tau /3}\big)^{12\lambda},\\
 z_4(\tau)^\lambda =729^\lambda {\rm e}^{2\pi{\rm i} \lambda (\tau+2)/3}\prod_{n\geq 0}\big(1+u^2{\rm e}^{2\pi{\rm i} n\tau/3}+u{\rm e}^{4\pi{\rm i} n\tau /3}\big)^{12\lambda},
\end{gather*}
so that we have
\begin{gather*}
\rho'(T) = \left(\begin{matrix}
 {\rm e}^{-2\pi{\rm i} \lambda}&0&0&0\\
 0&0&1&0\\
 0&0&0&1\\
 0&{\rm e}^{2\pi{\rm i} \lambda}&0&0
 \end{matrix}\right),\qquad \rho'(S) = \left(\begin{matrix}
 0&E(\lambda)&0&0\\
 E(\lambda)^{-1}&0&0&0\\
 0&0&0&1\\
 0&0&1&0
 \end{matrix}\right).
\end{gather*}
The transformation law for $\eta$ under $S$ shows that likewise $E(\lambda)=1$. Diagonalizing $\rho'(T)$ and symmetrising $\rho'(S)$ shows the following:
\begin{thm} \label{t:inducedfamily} Let
 \[
 F = \frac{1}{\sqrt{3}}\left(\begin{matrix}
 \sqrt{3}z_1^\lambda \\ z_2^\lambda+u^{-\lambda}z_3^\lambda+u^{-2\lambda}z_4^\lambda\\ z_2^\lambda+u^{2-\lambda}z_3^\lambda+u^{1-2\lambda}z_4^\lambda\\ z_2^\lambda+u^{1-\lambda}z_3^\lambda+u^{2-2\lambda}z_4^\lambda
 \end{matrix}
 \right).
\]
Then $F$ is a vector-valued modular form of weight $0$ satisfying the transformation laws
\begin{gather*}
F(\tau+1) = \left(\begin{matrix}
 {\rm e}^{-2\pi{\rm i} \lambda} &0 &0 &0\\
 0 & {\rm e}^{2\pi{\rm i} \lambda/3} & 0 & 0\\
 0 & 0 & {\rm e}^{2\pi{\rm i} (\lambda+1)/3} & 0\\
 0 & 0 & 0 & {\rm e}^{2\pi{\rm i}(\lambda + 2)/3}
 \end{matrix}
 \right)F(\tau),\\
 F(-1/\tau) = \frac{1}{3}\left(\begin{matrix}
 0&\sqrt{3} &\sqrt{3} &\sqrt{3}\\
 \sqrt{3} &2\cos(2\pi \lambda/3)& 2\cos(2\pi (\lambda+2)/3)& 2\cos(2\pi (\lambda+1)/3)\\
 \sqrt{3} & 2\cos(2\pi (\lambda+2)/3)& 2\cos(2\pi (\lambda+1)/3)& 2\cos(2\pi \lambda/3)\\
 \sqrt{3} & 2\cos(2\pi (\lambda+1)/3) & 2\cos(2\pi \lambda/3) & 2\cos(2\pi (\lambda+2)/3)
 \end{matrix}
 \right)F(\tau).
\end{gather*}
In fact, $F$ is a Frobenius family defined in terms of the parameter $\lambda$.
\end{thm}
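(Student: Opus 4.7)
The plan is to derive the theorem from an explicit change of basis applied to the coset basis $G \df \bigl(z_1^\lambda, z_2^\lambda, z_3^\lambda, z_4^\lambda\bigr)^{\rm T}$, whose transformation matrices $\rho'(T)$ and $\rho'(S)$ were determined just above the statement. Concretely, one can write $F = PG$ where $P = (1) \oplus \tfrac{1}{\sqrt{3}} V_\lambda$ and $V_\lambda$ is the $3\times 3$ Fourier-type matrix whose three rows record the coefficient patterns visible in the definitions of $F_2, F_3, F_4$. The theorem then reduces to the two matrix identities $P\rho'(T)P^{-1} = \rho(T)$ and $P\rho'(S)P^{-1} = \rho(S)$, with $\rho(T)$ and $\rho(S)$ the matrices displayed in the theorem, together with the Frobenius structure.

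First I would verify the $T$-law. The restriction of $\rho'(T)$ to the span of $z_2^\lambda, z_3^\lambda, z_4^\lambda$ is essentially a three-cycle whose cube is multiplication by ${\rm e}^{2\pi{\rm i}\lambda}$, so its eigenvalues are the three cube roots ${\rm e}^{2\pi{\rm i}(\lambda+m)/3}$ for $m = 0, 1, 2$. A direct computation using $u^{3\lambda} = {\rm e}^{2\pi{\rm i}\lambda}$ shows that each $F_{m+2}$ is the associated eigenvector, accounting for the bottom three diagonal entries of $\rho(T)$; the leading eigenvalue ${\rm e}^{-2\pi{\rm i}\lambda}$ is read directly from the $q$-expansion of $z_1^\lambda$.

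Next I would verify the $S$-law. Because the transformation law of $\eta$ forces $E(\lambda) = 1$, the matrix $\rho'(S)$ acts on $G$ as the double swap $z_1^\lambda \leftrightarrow z_2^\lambda$ and $z_3^\lambda \leftrightarrow z_4^\lambda$. Inverting the Fourier relations defining $F_2, F_3, F_4$ yields expressions for $z_2^\lambda, z_3^\lambda, z_4^\lambda$ as explicit $\CC$-linear combinations of $F_2, F_3, F_4$ with coefficients built from powers of $u$ and $u^\lambda$, via the identity $1 + u + u^2 = 0$. Substituting these into the swapped expressions for $F_j(S\tau)$ produces the first row $(0, \sqrt{3}, \sqrt{3}, \sqrt{3})/3$ of $\rho(S)$ via $F_1(S\tau) = z_2^\lambda = (F_2 + F_3 + F_4)/\sqrt{3}$, the first column by the symmetric contribution of $z_1^\lambda$ to each $F_{m+2}(S\tau)$, and each entry of the lower $3\times 3$ block as $\tfrac{1}{3}(u^\alpha + u^{-\alpha}) = \tfrac{2}{3}\cos(2\pi\alpha/3)$ for an integer linear combination $\alpha$ of $\lambda$ and the shift indices. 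A finite check (nine entries) matches the matrix stated in the theorem, and symmetry of $\rho(S)$ is automatic from the cosine expressions.

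Finally, $F$ is a Frobenius family in the sense of Definition~\ref{d:frobfam} because the degree-$4$ monic MLDE with polynomial coefficients $A(\lambda), B(\lambda), C(\lambda)$ derived just above the theorem has $\{z_j^\lambda\}$ as a basis of local solutions near $q = 0$ with exponents $-\lambda,\ \lambda/3,\ (\lambda+1)/3,\ (\lambda+2)/3$; these coincide with the diagonal entries of $\log\rho(T)/(2\pi{\rm i})$ just computed. Transporting via $K = 1728/j$ gives the associated regular-singular ODE on the $K$-line whose Frobenius solutions are the coordinates of $F$. The main obstacle in the proof is the bookkeeping in the $S$-matrix step: one has to track the interaction of the dessin permutation, the Fourier inversion, and the $\lambda$-twists coming from the $q^{1/3}$-expansions, and verify that the resulting cosine entries land in exactly the symmetric matrix stated in the theorem rather than some conjugate form.
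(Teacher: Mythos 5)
Your proposal is correct and follows essentially the same route as the paper: the paper's proof consists of the derivation of $\rho'(T)$, $\rho'(S)$ and the degree-$4$ MLDE immediately preceding the theorem, followed by the one-line instruction to diagonalize $\rho'(T)$ and symmetrize $\rho'(S)$, which is exactly the discrete-Fourier change of basis $F=PG$ you carry out. Your explicit inversion of the Fourier relations and the entrywise identification $\tfrac13\bigl(u^{\alpha}+u^{-\alpha}\bigr)=\tfrac23\cos(2\pi\alpha/3)$ checks out against the stated $S$-matrix, and your appeal to the MLDE with coefficients $A(\lambda)$, $B(\lambda)$, $C(\lambda)$ for the Frobenius-family claim matches the paper's (equally brief) justification.
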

Let now $\rho$ denote the diagonalized representation as in Theorem~\ref{t:inducedfamily} above, and fix exponents $L = \diagg\big({-}\lambda,\tfrac{\lambda}{3},\tfrac{\lambda+1}{3},\tfrac{\lambda+2}{3}\big)$. If $\lambda$ is not a fourth root of unity then $\rho$ is irreducible by Lemma~\ref{l:ind4irr}, and \cite[Theorem~16]{FM5} shows that $F$ is a form of minimal weight in the cyclic $D$-module $M(\rho,L)$. That is, $F$, $DF$, $D^2F$ and $D^3F$ generate $M(\rho,L)$ as $M$-module.

In order to classify characters of VOAs arising in the family $F$ of Theorem~\ref{t:inducedfamily}, we'll need the first few terms in the $q$-expansion of $F$:
\[
 F = \left(\begin{matrix}
 q^{-\lambda}\big(1-12\lambda q + 72\lambda\big(\lambda - \tfrac 14\big) q^2-288\lambda \big(\lambda^2-\tfrac 34\lambda + \tfrac{1}{72}\big)q^3+\cdots\big)\\
 3^{6\lambda+\tfrac 12}q^{\lambda/3}\big(1+288\lambda\big(\lambda^2+\tfrac{3}{4}\lambda+\tfrac{1}{72}\big)q + \cdots\big)\\
 3^{6\lambda+\tfrac 12}q^{(\lambda+1)/3}\big(12\lambda+864\lambda\big(\lambda^3+\tfrac 32\lambda^2+\tfrac{35}{144}\lambda+\tfrac{7}{288}\big)q+\cdots\big)\\
 3^{6\lambda+\tfrac 12}q^{(\lambda+2)/3}\big(72\lambda\big(\lambda+\tfrac14\big)+\tfrac{10368}{5}\lambda(\lambda+2)\big(\lambda+\tfrac 14\big)\big(\lambda+\tfrac 16\big)\big(\lambda+\tfrac{1}{12}\big)q+\cdots\big)
 \end{matrix}\right).
\]
At this point, since we must preserve the facts that $\rho(T)$ is diagonal and~$\rho(S)$ is real symmetric, we have freedom to permute coordinates of~$F$, swaps signs, and rescale the entire vector by an overall factor. We require in particular that the coefficients of each coordinate are all of the same sign. By considering the $q$ coefficient of the first coordinate, this already forces us to consider $\lambda< 0$. By considering the $q$-coefficient of the second coordinate we find that we likewise require $\lambda > -3/4$.

Further, we need these expansions to be rational. This implies that we must have $6\lambda + \frac 12 \in \ZZ$, or that is, $12\lambda \in 1+2\ZZ$. Therefore, we only have to worry about the values $\lambda = \tfrac{n}{12}$ for odd $n$ in the range $-8\leq n \leq -1$. When $n=-3,-6$ the monodromy is reducible, so we in fact only have to consider the values $\lambda = -\tfrac{1}{12}$, $-\tfrac{5}{12}$ and $-\tfrac{7}{12}$. The cases $\lambda = -1/12$ and $-7/12$ are accounted for by $\Vir(c_{2,9})$ and $G_{2,2}$, respectively (after reordering and rescaling basis vectors appropriately). The case $\lambda = -5/12$ can be ruled out by noticing that some fusion rules for the corresponding $S$-matrix are $-1$. Thus, this specialization gives a quasi-conformal modular form that can't be re-ordered and scaled to give a conformal modular form. We summarize this as:
\begin{thm}
The only specializations of~$F$ that give rise to characters of strongly regular VOAs with irreducible monodromy and exactly $4$ simple modules $($possibly after rescaling and reordering coordinates of~$F)$ are the values $\lambda = -\tfrac{1}{12}$ and $-\tfrac{7}{12}$, which correspond to~$\Vir(c_{2,9})$ and~$G_{2,2}$, respectively.
\end{thm}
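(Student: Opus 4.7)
The strategy is to work through the four conditions that any character vector of a strongly regular VOA (with irreducible monodromy and four simple modules) must satisfy, and to show that only the claimed values of $\lambda$ survive. First I would use the freedom to permute coordinates of~$F$ and to flip the sign of each coordinate individually~-- the only moves that preserve diagonality of $\rho(T)$ and symmetry of $\rho(S)$~-- together with the requirement that every Fourier coefficient be a nonnegative integer. The $q$-coefficient of the first coordinate is $-12\lambda$, forcing $\lambda\leq 0$; sign analysis of the $q$-coefficient of the second coordinate then restricts $\lambda$ to an open interval of the form $(-3/4,0)$. Next I would impose rationality of the Fourier coefficients: every coordinate except the first carries a global factor $3^{6\lambda+\tfrac12}$, and this is rational only when $12\lambda$ is an odd integer. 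Combining these restrictions leaves precisely the four candidates $\lambda\in\{-\tfrac{1}{12},-\tfrac{1}{4},-\tfrac{5}{12},-\tfrac{7}{12}\}$.

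Two of these four candidates must then be excluded by hand. For $\lambda=-\tfrac14$ the parameter ${\rm e}^{2\pi{\rm i}\lambda}$ playing the role of $\chi(U^3)$ in Lemma~\ref{l:ind4irr} is a primitive fourth root of unity, so the induced representation $\rho$ fails to be irreducible and is excluded by hypothesis; equivalently, two diagonal entries of $\rho(T)$ coincide and the Frobenius ansatz used to construct~$F$ degenerates. For $\lambda=-\tfrac{5}{12}$ I would specialise the symmetrised $S$-matrix of Theorem~\ref{t:inducedfamily} and compute the fusion numbers $N^\nu_{\lambda\mu}$ directly; the expected outcome is that at least one of them equals $-1$, so that $F$ is at best quasi-conformal in the sense of Definition~\ref{d:quasiconformal} and cannot be the character vector of a strongly regular VOA by Definition~\ref{d:conformal}.

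Finally, for the two surviving values I would verify that $F$ specialises to the character vector of a known strongly regular VOA. At $\lambda=-\tfrac{1}{12}$ the four exponents $-\lambda,\lambda/3,(\lambda+1)/3,(\lambda+2)/3$ equal $\tfrac{1}{12},-\tfrac{1}{36},\tfrac{11}{36},\tfrac{23}{36}$, which after an appropriate permutation match the values $h_{1,s}-\tfrac{c}{24}$ for the four irreducible modules of the non-unitary Virasoro minimal model $\Vir(c_{2,9})$ with central charge $c=-\tfrac{46}{3}$. The analogous identification at $\lambda=-\tfrac{7}{12}$ yields the affine VOA $G_{2,2}$, with central charge $c=\tfrac{14}{3}$. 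In both cases, matching the first few Fourier coefficients of~$F$ against the Feigin--Fuchs or Kac--Weyl character formulas completes the identification, and irreducibility of $\rho$ together with linear independence of the four $q$-characters pins down the specialisation uniquely. The main obstacle is the fusion-rule computation at $\lambda=-\tfrac{5}{12}$: the $S$-matrix entries from Theorem~\ref{t:inducedfamily} involve cosines of rational multiples of~$\pi$ together with an overall algebraic rescaling, so verifying the sign of a specific fusion coefficient requires careful algebraic simplification rather than a naive numerical check.
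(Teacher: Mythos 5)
Your proposal follows essentially the same route as the paper: sign/positivity constraints on the low-order Fourier coefficients force $-\tfrac34<\lambda<0$, rationality of the factor $3^{6\lambda+\frac12}$ forces $12\lambda$ odd, $\lambda=-\tfrac14$ is discarded because the induced monodromy is reducible, $\lambda=-\tfrac5{12}$ is discarded because some fusion rules of the symmetrized $S$-matrix equal $-1$, and the two survivors are matched to $\Vir(c_{2,9})$ and $G_{2,2}$. Your write-up merely supplies more detail than the paper (explicit central charges and exponent matching, and the appeal to Lemma~\ref{l:ind4irr} for the reducibility exclusion), which is consistent with the paper's terser argument.
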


Once such a family $F$ is computed, it is possible to derive other familes from it in the search for new character vectors. For example, let now $G = \eta^{-4}D_0F$, so that the exponents have shifted by $-1/6$, and the $S$-matrix has been replaced by its negative. This is another extremal Frobenius family where the specializations for generic $\lambda$ live in a one-dimensional space of vector-valued modular forms (though they are no longer minimal weight, as the minimal weight has shifted to $k_1 = -2$).

We find that
\begin{gather*}
 G = \left(\begin{matrix}
 q^{-\lambda-\tfrac 16}\big({-}\lambda+12\lambda\big(\lambda -\tfrac 43\big)q-72\lambda\big(\lambda-\tfrac{7}{3}\big)\big(\lambda - \tfrac{7}{12}\big)q^2+\cdots\big)\\
 3^{6\lambda+\tfrac 12}q^{(2\lambda-1)/6}\big({-}\lambda -288\lambda(\lambda-1)\big(\lambda^3+\tfrac 34\lambda +\tfrac{1}{72}\big)q+ \cdots\big)\\
 3^{6\lambda+\tfrac 12}q^{(2\lambda+1)/6}\big({-}12\lambda^2-864\lambda(\lambda-1)\big(\lambda^3+\tfrac 32\lambda^2+\tfrac{35}{144}\lambda+\tfrac{7}{288}\big)q+\cdots\big)\\
 3^{6\lambda+\tfrac 12}q^{(2\lambda+3)/6}\big({-}72\lambda^2\big(\lambda\!+\!\tfrac 14\big)\!-\!\tfrac{10368}{5}\lambda(\lambda\!-\!1)\big(\lambda\!+\!\tfrac{1}{12}\big)\big(\lambda\!+\!\tfrac 16\big)\big(\lambda\!+\!\tfrac 14\big)(\lambda\!+\!2)q\!+\!\cdots\big)
 \end{matrix}\right)\!.
\end{gather*}
As above, rationality implies that we must have $12\lambda$ equal to an odd integer. Irreducibility implies that $12\lambda$ is coprime to $3$, so that $12\lambda \equiv 1,5\pmod{6}$. Next, the fact that all of the Fourier coefficients in each coordinate of $G$ should have the same sign implies that if $\lambda > 0$ then $\lambda\leq 7/12$, as one sees by examining the first coordinate. If $\lambda < 0$ then examination of the second coordinate implies that one should have $\lambda > -\tfrac{9}{12}$. That is, we are only interested in rational $\lambda$ satisfying $\lambda = \tfrac{n}{12}$ where $-7 \leq n \leq 7$ and $n\equiv 1,5\pmod{6}$. Consideration of integrality and fusion rules then implies that only the case $\lambda=-1/12$ could plausibly correspond to a VOA.

In order to correct fusion rules and signs, one needs to adjust the specialization of $G$ at $\lambda=-1/12$ by permuting and rescaling as follows:
\begin{equation}\label{eq:H}
 H = \left(\begin{matrix}
q^{\tfrac{17}{36}}\big(1 + 25q^2+ 133q^3 +578q^4+ 1970q^5+ 6076q^6+16840q^7+\cdots\big)\\
 q^{-\tfrac{7}{36}}\big(1+13q+ 98q^2+471q^3+1780q^4+5765q^5+ 16856q^6+\cdots\big)\\
 q^{\tfrac{5}{36}}\big(1+13q+73q^2 +338q^3+ 1251q^4+4048q^5+ 11838q^6+\cdots\big)\\
 q^{-\tfrac{1}{12}}\big(1+17q+116q^2+496q^3+1817q^4 +5742q^5+16535q^6+\cdots\big)
 \end{matrix}\right),
\end{equation}
which has the $S$-matrix:
\[
 \rho(S) = \frac{1}{3}\left(\begin{matrix}
2 \cos\big(\frac{5}{18} \pi\big) & -2 \cos\big(\frac{7}{18} \pi\big) & -2 \cos\big(\frac{1}{18} \pi\big) & \sqrt{3} \vspace{1mm}\\
-2 \cos\big(\frac{7}{18} \pi\big) & -2 \cos\big(\frac{1}{18} \pi\big) & -2 \cos\big(\frac{5}{18} \pi\big) & -\sqrt{3} \vspace{1mm}\\
-2 \cos\big(\frac{1}{18} \pi\big) & -2 \cos\big(\frac{5}{18} \pi\big) & 2 \cos\big(\frac{7}{18} \pi\big) & \sqrt{3} \vspace{1mm}\\
\sqrt{3} & -\sqrt{3} & \sqrt{3} & 0
 \end{matrix}\right).
 \]
This representation $\rho$ is realized by a modular tensor category, but we do not know if the conformal modular form $H$ is realized by a strongly regular VOA.

\subsubsection{General numerical examples in rank 4}
The moduli space of irreducible representations of $\Gamma$ of rank $4$ has $3$-dimensional components, giving rise to $3$-parameter families of vector-valued modular forms. It is difficult to handle these $3$-parameter families systematically as above, where we exploited the special nature of the induced representations to solve the differential equations of interest. Nevertheless, one can proceed to search for conformal modular forms arising in the $3$-parameter families using a~numerical approach. Or, in another direction, one can impose more restrictions such as in \cite{ArikeNagatomoSakai}, to make such computations more approachable.

We will describe some numerical experiments that we performed in the cyclic case of \cite{FM5}, and we will assume that $k_1 = 0$, so that $\Tr(L) = 1$. Then, as in ranks $2$ and $3$, up to scaling there is a unique minimal weight form $F$ that satisfies a monic modular linear differential equation of the form
\[
 D^4F + aE_4D^2F + bE_6DF + cE_4^2F = 0
\]
for some $a,b,c \in \CC$. As shown in \cite{FM2}, this corresponds to the ordinary differential equation
\begin{gather*}
\left(\theta_K^{4} - \left(\frac{2K+1}{1-K}\right) \theta_K^{3} + \left(\frac{44 K^{2} - 4(9 a+7)K + 36 a + 11}{36 (1-K)^2}\right) \theta_K^{2} \right. \\
 \left.\qquad{} +\left(\frac{8 K^{2} - 4(3 a + 9 b + 1)K - 6 a + 36 b - 1}{36 (1-K)^2}\right) \theta_K + \frac{c}{(1-K)^2}\right)f = 0.
\end{gather*}
This equation was already considered in Example~\ref{ex:rank4} on page~\pageref{ex:rank4} above, where in particular one finds formulas for the unknowns $a$, $b$ and $c$ in terms of the exponents $e_1$, $e_2$, $e_3$ and $e_4$, which must satisfy $e_1+e_2+e_3+e_4=\Tr(L)=1$ thanks to our hypothesis $k_1=0$.

We have searched through the parameter space, focusing on exponents $e_j$ corresponding to congruence representations, for conformal specializations. We succeeded in finding a number of known examples of characters of VOAs, in addition to a number of as-of-yet undetermined conformal modular forms. The $S$-matrices in these undetermined cases are:
\[
S_1 = \frac{1}{4}\sqrt{1+\frac{1}{\sqrt{5}}}\left(\begin{matrix}
 \sqrt{5}-1&\sqrt{5}-1&2&2\\
 \sqrt{5}-1&1-\sqrt{5}&2&-2\\
 2&2&1-\sqrt{5}&1-\sqrt{5}\\
 2&-2&1-\sqrt{5}&\sqrt{5}-1\\
 \end{matrix}
\right)
\]
or
\begin{gather*}
S_2 = \frac{1}{3}\left(\begin{matrix}
 -\cos\big(\tfrac{5\pi }{18}\big)\!+\!\sqrt{3}\sin\big(\tfrac{5\pi}{18}\big)\! & \cos\big(\tfrac{5\pi}{18}\big)\! + \!\sqrt{3}\sin\big(\tfrac{5\pi}{18}\big) & 2\cos\big(\tfrac{5\pi}{18}\big) & \!\sqrt{3}\vspace{1mm}\\
 \cos\big(\tfrac{5\pi}{18}\big) \!+\! \sqrt{3}\sin\big(\tfrac{5\pi}{18}\big)&2\cos\big(\tfrac{5\pi}{18}\big)&\cos\big(\tfrac{5 \pi}{18}\big)-\sqrt{3}\sin\big(\tfrac{5\pi}{18}\big)&\!\!\!-\sqrt{3}\!\vspace{1mm}\\
 2\cos\big(\tfrac{5\pi}{18}\big)&\!\cos\big(\tfrac{5\pi}{18}\big)\!-\!\sqrt{3}\sin\big(\tfrac{5\pi}{18}\big)\!&-\cos\big(\tfrac{5\pi}{18}\big) \!-\! \sqrt{3}\sin\big(\tfrac{5\pi}{18}\big)\!\!& \!\!\sqrt{3}\vspace{1mm}\\
 \sqrt{3}&-\sqrt{3}&\sqrt{3}&0
 \end{matrix}
\right)\!\!.
\end{gather*}
Note that these two $S$-matrices are realized by known modular tensor categories. The Fourier coefficients of the corresponding suspected conformal modular forms are listed in Table~\ref{rank4examples}. The two examples with $\rho(S) = S_1$ look like they could perhaps be realized as tensor products of strongly regular VOAs with two simple modules each, while those with $\rho(S)=S_2$ could be harder to identify. We end with a result that helps narrow down the possibilities of identifying if the first three entries of Table~\ref{rank4examples} are realized by VOAs:

\begin{table}\centering
 \renewcommand{\arraystretch}{1.8}
 \begin{tabular}{|c|c|}
 \hline
 $\rho(S)$ & $F_V$\\
 \hline
 $S_1$& $q^{-\tfrac {33}{40}}\big(1 + 99q + 50787q^{2} + 2794770q^{3} + 70309800q^{4} + 1134528021q^{5} +\cdots\big)$\\
 &$q^{\tfrac {17}{40}}\big(792 + 154088q + 6610824q^{2} + 145807200q^{3} + 2162364600q^{4} +\cdots\big)$\\
 &$q^{\tfrac {23}{40}}\big(3366 + 466752q + 17581212q^{2} + 361184706q^{3} + 5110157492q^{4} +\cdots\big)$\\
 &$q^{\tfrac {33}{40}}\big(14280 + 1252152q + 39126384q^{2} + 721364424q^{3} + 9486909432q^{4} +\cdots\big)$\\
 \hline
 $S_1$&$q^{-\tfrac {37}{40}}\big(1 + 37q + 65527q^{2} + 5306096q^{3} + 174479457q^{4} + 3487679200q^{5} +\cdots\big)$\\
 &$q^{\tfrac {13}{40}}\big(592 + 223184q + 13516544q^{2} + 383202192q^{3} + 6974809024q^{4} +\cdots\big)$\\
 &$q^{\tfrac {27}{40}}\big(11063 + 1716467q + 75169681q^{2} + 1783793680q^{3} + 28874814615q^{4} +\cdots\big)$\\
 &$q^{\tfrac {37}{40}}\big(47840 + 4779216q + 173590384q^{2} + 3687784672q^{3} + 55362274160q^{4} +\cdots\big)$\\
 \hline
 $S_2$&$q^{\tfrac{-29}{36}} \big( 1 + 58q + 29319q^{2} + 1492282q^{3} +
35652194q^{4} + 551508428q^{5} + \cdots\big)$\\
 &$q^{\tfrac{31}{36}} \big( 16588 + 1295459q + 37792162q^{2} +
661694421q^{3} + 8340292294q^{4} +\cdots\big)$\\
 &$q^{\tfrac{19}{36}} \big( 1595 + 230318q + 8596093q^{2} +
173614474q^{3} + 2409567457q^{4} + \cdots\big)$\\
 &$q^{\tfrac{5}{12}} \big( 1044 + 195489q + 8038422q^{2} +
171114471q^{3} + 2458828278q^{4} + \cdots\big)$\\
 \hline
 $S_2$& $q^{\tfrac{-29}{36}} \big( 1 + 638q + 33959q^{2} + 1509682q^{3} +
35709150q^{4} + 551665608q^{5} + \cdots\big)$\\
 &$q^{\tfrac{-5}{36}} \big( 116 + 18328q + 1302999q^{2} +
37817682q^{3} + 661768081q^{4} +\cdots\big)$\\
 &$q^{\tfrac{19}{36}} \big( 1015 + 228114q + 8586233q^{2} +
173587214q^{3} + 2409491477q^{4} +\cdots\big)$\\
 &$q^{\tfrac{17}{12}} \big( 190269 + 8017542q + 171051831q^{2} +
2458656018q^{3} + 26971011288q^{4} + \cdots\big)$\\
 \hline
\end{tabular}
\caption{Suspected conformal modular forms of rank $4$. The exponents $e_j$ can be read off from the exponents of the leading $q$-powers.}\label{rank4examples}
\end{table}

\begin{lem} The first three examples in Table $\ref{rank4examples}$ are not a tensor product of a pair of VOAs, one of which is holomorphic.
\end{lem}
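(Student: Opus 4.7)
The plan is to exploit the multiplicativity of character vectors under tensor products and derive a contradiction on the first nontrivial Fourier coefficient. If $V = U \otimes W$ with $U$ a nontrivial holomorphic VOA, then the simple $V$-modules are precisely the $U \otimes M_j$ for $\{M_j\}$ the simple $W$-modules, and accordingly $F_V = f_U \cdot F_W$, where $f_U$ is the unique $q$-character of $U$. Comparing central charges gives $c_V = c_U + c_W$, and Corollary~\ref{corc>0} forces $c_U \in 8\NN$.

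Reading off the leading $q$-powers in Table~\ref{rank4examples}, the first three rows have $c_V = 99/5$, $111/5$ and $58/3$ respectively, each strictly less than $24$. Since $W$ is strongly regular of CFT type, the Dong--Mason inequality $\tilde{c}_W > 0$ reduces to $c_W > 0$ (as $h_{\min}^W = 0$), forcing $c_U \in \{8,16\}$. The paper has already recorded $\cS_8 = \{V_{E_8}\}$ and $\cS_{16} = \{V_{E_8 \perp E_8}, V_{\Gamma_{16}}\}$, and in either central charge the character of $U$ is uniquely determined: $f_U = E_4/\eta^8 = q^{-1/3}(1 + 248q + O(q^2))$ when $c_U = 8$, and $f_U = E_4^2/\eta^{16} = q^{-2/3}(1 + 496q + O(q^2))$ when $c_U = 16$ (the latter uses $\Theta_L = E_4^2$ for both rank-$16$ even unimodular lattices, since the space of weight-$8$ forms for $\Gamma$ is one-dimensional).

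Next I would extract $F_W^{(1)} = F_V^{(1)}/f_U$ and use the fact that its coefficient of $q^{1-c_W/24}$ equals $\dim W_1$ and hence must be a nonnegative integer. Writing $F_V^{(1)} = q^{-c_V/24}(1 + a_1 q + \cdots)$ and $f_U = q^{-c_U/24}(1 + b_1 q + \cdots)$, this coefficient equals $a_1 - b_1$. From the table $a_1 \in \{99, 37, 58\}$ across the three rows, while $b_1 \in \{248, 496\}$, so $a_1 < b_1$ holds uniformly: in each of the six combinations $\dim W_1$ would be strictly negative, a contradiction.

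No genuine obstacle appears once one observes $c_V < 24$: the finitely many possibilities for $U$ are immediately ruled out by a single Fourier coefficient comparison. The only mild subtlety is that the two nonisomorphic members of $\cS_{16}$ share the same $q$-character, so the enumeration collapses to two checks per row of Table~\ref{rank4examples}.
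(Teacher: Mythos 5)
Your argument is essentially the paper's own proof: bound the central charge of the holomorphic factor below $24$ via positivity of the effective central charge of the other factor, reduce to $c_U\in\{8,16\}$ with $\dim U_1\in\{248,496\}$, and contradict the values $\dim V_1=99,37,58$ read from Table~\ref{rank4examples} (your coefficient comparison $a_1-b_1=\dim W_1<0$ is the same computation as the paper's $\dim V_1=\dim X_1+\dim U_1\geq 248$). One small imprecision: CFT-type alone does \emph{not} give $h^W_{\min}=0$, since strongly regular CFT-type VOAs can have simple modules of negative conformal weight (e.g., $\Vir(c_{2,7})$); here $h^W_{\min}=0$ because the conformal weights of $W$-modules coincide with those of $V$-modules, which are visibly nonnegative from the exponents in the table -- this is exactly how the paper phrases the bound, via the requirement that the exponent matrix $L_V+\tfrac{c_X}{24}I$ of the non-holomorphic factor have a negative entry.
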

\begin{proof} Let $V$ be one of these three examples and assume by way of contradiction that $V=X\otimes U$, where $U$, $X$ are VOAs and $X$ is holomorphic. Let $L_V$ be the exponent matrix for $V$, for example, for the first entry of Table~\ref{rank4examples} we have $L_V=\diagg(-33/40, 17/40, 23/40, 33/40)$.

If $X$ has central charge $c_X$ (a positive integer divisible by $8$) then the exponent matrix for $U$ is
\[
L_U= \diagg(c_X/24, c_X/24, c_X/24, c_X/24)+L_V.
\]

The general result that for a strongly regular VOA we have $\tilde{c}>0$ means that the exponent matrix for such a VOA \emph{cannot} be nonnegative in the sense that all entries cannot be nonnegative. Therefore, looking at the first entry of $L_U$, we must have $c_X<24$.

Holomorphic VOAs with central charge $< 24$ are known: they are lattice theories $V_{E_8}$, $V_{E_8\perp E_8}$, $V_{\Gamma_{16}}$. In particular we have $\dim X_1= 248$ or $496$. Therefore $\dim V_1=\dim X_1+\dim U_1\geq 248$. But in the three Examples we have $\dim V_1=99$, $37$ or $58$. This contradiction proves the lemma.
\end{proof}

\subsection*{Acknowledgements}
Franc was supported by an NSERC Discovery Grant, and Mason was supported by grant \#427007 from the Simons Foundation. We thank these institutions for their support. We also thank the anonymous referees for their helpful comments on an earlier draft of this paper.

\pdfbookmark[1]{References}{ref}
\LastPageEnding

\end{document}